\renewcommand*\env@matrix[1][*\c@MaxMatrixCols c]{%
  \hskip -\arraycolsep
  \let\@ifnextchar\new@ifnextchar
  \array{#1}}
\newtheorem{corollary}{Corollary}[section]
\newtheorem{proposition}{Proposition}[section]
\newtheorem{theorem}{Theorem}[section]
\newtheorem{lemma}{Lemma}[section]
\theoremstyle{definition}
\newtheorem{definition}{Definition}[section]
\newtheorem{remark}{Remark}[section]
\newtheorem{example}{Example}[section]
\preto\theorem{\vspace{3pt}}
\apptocmd\endtheorem{\vspace{3pt}}{}{}
\preto\definition{\vspace{3pt}}
\apptocmd\enddefinition{\vspace{3pt}}{}{}
\preto\proposition{\vspace{3pt}}
\apptocmd\endproposition{\vspace{3pt}}{}{}
\preto\corollary{\vspace{3pt}}
\apptocmd\endcorollary{\vspace{3pt}}{}{}
\preto\lemma{\vspace{3pt}}
\apptocmd\endlemma{\vspace{3pt}}{}{}
\preto\remark{\vspace{3pt}}
\apptocmd\endremark{\vspace{3pt}}{}{}
\titleformat{\subsubsection}[runin]
  {\normalfont\normalsize\bfseries}   
  {\thesubsubsection}                 
  {1em}                               
  {}                                  
  [. ]                                
\titlespacing*{\subsubsection}{0pt}{2.0ex plus .2ex}{0.8em}
\newcommand{\spacearound}[3]{
  \preto{#1}{\vspace{#2}}
  \apptocmd{\csname end#1\endcsname}{\vspace{#3}}{}{}
}
\date{}
\title{Orientation of alcoves in affine Weyl groups}
\author{Nathan Chapelier-Laget}
\begin{document}


\maketitle

\begin{abstract}
Let $W$ be an irreducible Weyl group and $W_a$ its affine Weyl group. In \cite{NC1} the author introduced an affine variety $\widehat{X}_{W_a}$, called the Shi variety of $W_a$, whose integral points are in bijection with $W_a$. The set of irreducible components of $\widehat{X}_{W_a}$ provided results at the intersection of group theory, combinatorics and geometry. In this article we express the notion of orientation of alcoves in terms of the first cohomology group of $W$ and the irreducible components of the Shi variety. We also provide modular equations in terms of Shi coefficients that describe explicitly the property of having the same orientation.
\end{abstract}

\tableofcontents

\vspace{1cm}

\noindent \href{https://www.nathanchapelier.fr/}{\textcolor{gray}{Nathan Chapelier-Laget}}  \\
nathan.chapelier@univ-littoral.fr \\
Université du Littoral Côte d'Opale, \\
Calais, 62228, France

\clearpage

\newpage

\section{Introduction}

\subsection{General definitions}
Let $V$ be a Euclidean space with inner product $\langle -, -\rangle$. Let $\Phi$ be an irreducible crystallographic root system in $V$ with simple system $\Delta$. We set $\Delta = \{\alpha_1, \dots, \alpha_n \}$ and $\Phi^+ =\{\beta_1,\dots, \beta_m\}$ with $n=|\Delta|$ and $m=|\Phi^+|$. We assume here that $\Phi$ is essential, i.e.  $\mathbb{Z}\Phi \otimes_{\mathbb{Z}}\mathbb{R}= V$. In this article, when we say \say{root system} it always means an irreducible, crystallographic, essential root system.

 For $\alpha \in \Phi$ we write $\alpha^{\vee}:= \frac{2\alpha}{\langle \alpha, \alpha \rangle}$. The dual root system of $\Phi$ is defined by $\Phi^{\vee} := \{\alpha^{\vee}~|~\alpha \in \Phi\}$, with dual simple system $\Delta^{\vee} := \{\alpha^{\vee}~|~\alpha \in \Delta\}$.

\subsubsection{Height}
Let $\alpha \in \Phi$ such that  $\alpha = a_1\alpha_1 + \cdots + a_n\alpha_n$ with $a_i \in \mathbb{Z}$. The height of $\alpha$ (with respect to $\Delta$) is defined by the number $\mathsf{h}(\alpha) = a_1 + \cdots+ a_n$. 
 The height of the dual root $\alpha^{\vee} = b_1\alpha_1^{\vee} + \cdots + b_n\alpha_n^{\vee}$ (with respect to $\Delta^{\vee}$) is defined by $\mathsf{h}(\alpha^{\vee}) = b_1 + \cdots + b_n$.
 
  We also have $\mathsf{h}(x + y) = \mathsf{h}(x) + \mathsf{h}(y)$ and $\mathsf{h}(-x)=-\mathsf{h}(x)$ for all $x, y, x+y \in \Phi$ (respectively for all $x, y, x+y \in \Phi^{\vee}$).

From the classification of irreducible crystallographic root systems there are at most two possible root lengths in $\Phi$. A root is called short if it has minimal length. We denote by $\theta$ the \emph{highest short root} of $\Phi$.

 \subsubsection{Index of connection} 
 We set once and for all $\Psi := \Phi^{\vee}$ and  $\varepsilon_i := \alpha_i^{\vee}$ for any $i=1,\dots,n$.
Let $\mathbb{Z}\Psi $ be the coroot lattice and let us write $\mathbb{Z}\Psi  = \mathbb{Z}\varepsilon_1 \oplus \cdots\oplus \mathbb{Z}\varepsilon_n$. We define its dual lattice $\mathbb{Z}\Psi^*$ as $
\mathbb{Z}\Psi^{*} := \{ x \in V ~|~\langle x, y \rangle \in \mathbb{Z} ~\forall y \in \mathbb{Z}\Psi\}.
$
The lattice $\mathbb{Z}\Psi^{*}$ is called the \emph{weight lattice}. The weight lattice can be decomposed as 
\begin{equation}
\mathbb{Z}\Psi^{*}= \mathbb{Z}\omega_1\oplus \cdots\oplus \mathbb{Z}\omega_n
\end{equation}
where $\omega_i$ is such that $\langle \varepsilon_i, \,\omega_j \rangle = \delta_{ij}$. The elements $\omega_i$ are called the \emph{fundamental weights}.

The lattice $\mathbb{Z}\Phi$ is a sublattice of $\mathbb{Z}\Psi^{*}$. The \emph{index of connection} of $\Phi$ is 
the cardinality of the following quotient group, and we denote it by
\begin{equation}\label{def index of connection}
f_{\Phi} := \left | \faktor{\mathbb{Z}\Psi^{*}}{\mathbb{Z}\Phi} \right |.
\end{equation}

\subsubsection{Weyl group and affine Weyl group}
Let $W$ be the \emph{Weyl group} associated to $\mathbb{Z}\Phi$, that is the maximal (for inclusion) reflection subgroup of $O(V)$ admitting $\mathbb{Z}\Phi$ as a $W$-equivariant lattice.  
We identify $\mathbb{Z}\Phi$ and the group of its associated translations and we denote by $\tau_x$ the translation corresponding to $x \in \mathbb{Z}\Phi$.

Let $k \in \mathbb{Z}$, $\alpha \in \Phi$. We define the affine reflection $s_{\alpha,k} \in \text{Aff}(V)$ by 
\begin{equation}\label{affine reflection}
s_{\alpha,k}(x)=x-\left(\langle \alpha^{\vee}, x \rangle-k\right)\alpha.
\end{equation}
The group generated by all the affine reflections $s_{\alpha,k}$ with $\alpha \in \Phi$ and $k \in \mathbb{Z}$ is called the \emph{affine Weyl group} associated to $\Phi$. We denote it by
\begin{equation}
    W_a := \langle s_{\alpha,k}~|~\alpha \in \Phi, ~k \in \mathbb{Z} \rangle.
\end{equation}

The group $W_a$ admits a Coxeter group structure, with set of Coxeter generators $S_a := \{s_{\alpha_1},\dots,s_{\alpha_n}\} \cup \{s_{\theta,1}\}$. For short, set $S_a=\{s_0,s_1,\dots,s_n\}$, where $s_0:=s_{\theta,1}$ and
$s_i:=s_{\alpha_i}:=s_{\alpha_i,0}$ for $i=1,\dots,n$.
For $s_i,s_j\in S_a$ with $i\neq j$, let $m_{ij}=m_{ji}\in\mathbb{N}_{\ge 2}$ denote the order of $s_is_j$ and set $m_{ii}=1$.

It is also well known that $W_a \simeq \mathbb{Z}\Phi \rtimes W$. Therefore, any element $w \in W_a$ decomposes as $w=\tau_x\overline{w}$ where $x \in \mathbb{Z}\Phi$ and $\overline{w} \in W$. The element $\overline{w}$  is called the \emph{finite part} of $w$. For instance, the affine reflections $s_{\alpha,k}$ decompose as 
\begin{equation}\label{affine reflection decomp}
    s_{\alpha,k} = \tau_{k\alpha}s_{\alpha}.
\end{equation}

Finally, we shall use the following formula,
\begin{equation}\label{conjugacy formula with translation}
    w \tau_{x} w^{-1} = \tau_{w(x)}
\end{equation}
for any $w \in W_a$ and any $x \in \mathbb{Z}\Phi$. It will be convenient to use (\ref{conjugacy formula with translation}) as follows
\begin{equation}\label{conjugacy formula with translation second version}
    w \tau_{x} = \tau_{w(x)}w.
\end{equation}

\medskip

\subsubsection{Alcoves and Shi coefficients}
Let $\alpha \in \Phi$, $k,m \in \mathbb{Z}$. We set the hyperplanes 
\begin{align*}
H_{\alpha,k} & = \{x \in V~|~s_{\alpha,k}(x)=x \} \\
                     &= \{ x \in V~|~ \langle x, \alpha^{\vee} \rangle = k\}
\end{align*}

 and the strips 
 $$
 H_{\alpha,k}^p  = \{x \in V~|~k < \langle x ,\alpha^{\vee} \rangle < k+p \}.
 $$
 An alcove of $V$ is by definition  a connected component of
 
\begin{equation}
 V ~\backslash \bigcup\limits_{\begin{subarray}{c}
 ~ ~\alpha \in \Phi^{+} \\ 
  k \in \mathbb{Z}
\end{subarray}}
H_{\alpha,k}.
\end{equation}

We denote by $A_e$ the alcove defined by $A_e = \bigcap_{\alpha \in \Phi^+} H_{\alpha,0}^1$. The group $W_a$ acts simply transitively on the set of alcoves, hence there is a bijection between $W_a$ and the set of alcoves. This bijection is defined by $w \mapsto A_w$ where $A_w := wA_e$. We call $A_w$ the corresponding alcove associated to $w \in W_a$. Any alcove of $V$ can be written as an intersection of special strips, that is there exists a $\Phi^+$-tuple of integers $(k(w,\alpha))_{\alpha \in \Phi^+}$, that we call the \textit{Shi coefficients} of $w$, such that 
\begin{equation}
A_w = \bigcap\limits_{\alpha \in \Phi^+}H_{\alpha,\, k(w,\alpha)}^1.
\end{equation}

In \cite{JYS1} Jian Yi Shi shows that the $\Phi^+$-tuple of integers $(k(w,\alpha))_{\alpha \in \Phi^+}$ subject to certain conditions characterizes entirely $w$. We call this $\Phi^+$-tuple the \textit{Shi vector} of $w$. Based on this characterization, the author defined in \cite{NC1} an affine variety $\widehat{X}_{W_a}$, called the Shi variety of $W_a$, whose integral points are in bijection with $W_a$. 

We denote by $H^0(\widehat{X}_{W_a})$ the set of irreducible components of $\widehat{X}_{W_a}$. These components are affine subspaces which are parameterized by a collection of integral vectors $\lambda \in \mathbb{N}^m$ that we call admitted vectors. The component associated to an admitted vector $\lambda$ is denoted by $X_{W_a}[\lambda]$ and if $\lambda \neq \gamma$ are both admitted then $X_{W_a}[\lambda] \cap X_{W_a}[\gamma]= \emptyset$ (we recall these notions in Section \ref{background Shi var}). Thus, each element $w \in W_a$ can be seen as an integral point of $\widehat{X}_{W_a}$, which will be denoted by $\iota(w):=(k(w,\alpha))_{\alpha \in \Phi^+}$,  and lies in a specific component (see Theorem \ref{TH central} for more details about the map $\iota$). 

\subsubsection{Fundamental parallelepiped}\label{section parallelepiped}
We define $P_{\Delta} := \bigcap_{\alpha \in \Delta}H_{\alpha,0}^1$. The fundamental weights $\omega_i$ are some of the vertices  of $P_{\Delta}$ and we have 
\begin{equation}
    P_{\Delta} = \left\{\sum\limits_{i=1}^nc_i\omega_i~|~ c_i \in \llbracket 0,1 \rrbracket \right\}.
\end{equation}
 
It is convenient to introduce the following definition
\begin{equation}
    \text{Alc}(P_{\Delta}) := \{ w \in W_a~|~A_w \subset P_{\Delta} \},
\end{equation}
where it is known \cite[\S11-6, Lemma C]{BOURB} that 
\begin{equation}\label{number alcove funda para}
    |\text{Alc}(P_{\Delta})| = \frac{|W|}{f_{\Phi}}.
\end{equation}

\bigskip

\begin{figure}[h!]
\centering
\begin{tabular}{cc} 
Type $A_2$
& 
Type $B_2$
\\
\begin{tikzpicture}[scale=1.08]
\tiny
\clip (-3,-4) rectangle (4,4);
\path[fill=gray!30] (0,0) -- +(0:1) -- ++(60:1);
\path[fill=gray!30] (1,0) -- (0.5,{sqrt(3)/2}) -- (1.5,{sqrt(3)/2}) -- cycle;

\draw[line width = 0.3mm, ->] (0,0) -- + (-30:{sqrt(3)});
\draw[line width = 0.3mm, ->] (0,0) -- + (90:{sqrt(3)});
\draw[line width = 0.3mm, ->] (0,0) -- + (30:{sqrt(3)});
\node[anchor = north west, scale=1.5] at ( 1.5, {-sqrt(3)/2}  ) {$\alpha_1$};
\node[anchor = south, scale=1.5] at ( 0, {sqrt(3)} ) {$\alpha_2$};
\node[anchor = south west, scale=1.5] at ( 1.5, {sqrt(3)/2} ) {$\theta$};
\draw[line width = 0.3mm, ->, color=purple!80] (0,0) -- + (0:1);
\node[anchor = south west, scale=1.5, color=purple!80] at
(1,0) {$\varpi_1$};
\draw[line width = 0.3mm, ->, color=teal] (0,0) -- + (60:1);
\node[anchor = south, scale=1.5, color=teal] at
(0.5,{sqrt(3)/2}) {$\varpi_2$}; 
\foreach \i in {-6,...,6}
{
\draw[color=gray, line width = 0.01mm] (\i,0) -- +(60:6) -- ++(60:-6);
\draw[color=gray, line width = 0.01mm] (-6,{\i*sqrt(3)/2}) -- (6,{\i*sqrt(3)/2});
\draw[color=gray, line width = 0.01mm] ({\i*3/4},{\i*sqrt(3)/4}) -- +(120:6) -- ++(120:-6);
}
\end{tikzpicture}
&  
\begin{tikzpicture}[scale=1.08]
\tiny
\clip (-3,-4) rectangle (4,4);
\path[fill=gray!30] (0,0) -- +(0:1) -- ++(45:{1*sqrt(2)});
\path[fill=gray!30] (1,0) -- (2,0) -- (1,1) -- cycle;
\path[fill=gray!30] (1,1) -- (2,0) -- (2,1) -- cycle;
\path[fill=gray!30] (2,0) -- (2,1) -- (3,1) -- cycle;

\foreach \i in {-6,...,6}
{
\draw[color=gray, line width = 0.01mm] (\i,-\i) -- + (45:10) -- ++ (45:-10);
\draw[color=gray, line width = 0.01mm] (\i,\i) -- + (-45:10) -- ++ (-45:-10);
\draw[color=gray, line width = 0.01mm] (0,\i) -- + (0:10) -- ++ (0:-10);
\draw[color=gray, line width = 0.01mm] (\i,0) -- + (90:10) -- ++ (90:-10);
}

\draw[line width = 0.3mm, ->] (0,0) -- + (-45:{2*sqrt(2)});
\node[anchor = north west, scale=1.5] at
(2,-2) {$\alpha_1$};

\draw[line width = 0.3mm, ->] (0,0) -- + (90:2);

\node[anchor = south, scale=1.5] at
(0,2) {$\alpha_2$};
\draw[line width = 0.3mm, ->] (0,0) -- + (0:2);
\node[anchor = north west, scale=1.5] at
(2,0) {$\theta$};
\draw[line width = 0.3mm, ->, color=purple!80] (0,0) -- + (0:1.97);
\node[anchor = south west, scale=1.5, color=purple!80] at
(2,0) {$\varpi_1$};
\draw[line width = 0.3mm, ->, color=teal] (0,0) -- + (45:{sqrt(2)});
\node[anchor = south, scale=1.5, color=teal] at
(1,1) {$\varpi_2$}; 
\end{tikzpicture} 
\end{tabular}
\caption{The fundamental parallelepipeds in type $A_2$ and $B_2$. We have $\text{Alc}(P_{A_2})=\{e, \, s_0\}$ and $\text{Alc}(P_{B_2})=\{e, \,s_0,\,s_0s_1, \, s_0s_1s_2\}$. The simple roots, the fundamental weights and the highest short root $\theta$ are also drawn.}
\end{figure}

 \medskip

\subsection{Goals of this article} The first goal of this paper is to study the behavior, up to conjugacy, of the sections of the exact sequence~(\ref{eq seq Z}) when the coefficient space $ \mathbb{Z}\Phi $ is replaced by the $A$-module $A\Phi$, where $ A $ is a commutative ring satisfying $ \mathbb{Z} \subset A \subset \mathbb{R} $.

\medskip

The second goal is to find the smallest $A$ (for inclusion) such that $H^1(W, A\Phi)=0$. This is done in Section \ref{coho in A}. We say that there is an $A\Phi$-obstruction between two sections $\mathsf{s}_1$ and $\mathsf{s}_2$ if they do not define the same class in $H^1(W, A\Phi)$. 
Section \ref{coho in A} also expresses the $A\Phi$-conjugacy in terms of the transpose of the Cartan matrix. We explain how this matrix detects the obstructions between sections. In particular we give a general tool (Proposition \ref{bij coh}) that enables us to give in Section \ref{concrete} some modular equations describing the elements of $H^1(W,\mathbb{Z}\Phi)$ in type $A,B,C$ and $D$.

\medskip

The third and main goal of this article, addressed in Section~\ref{Sec orientation alcoves}, is to express the orientation of an alcove in terms of its Shi vector.
Let us first recall what is meant by \emph{orientation}. Two alcoves $A_w$ and $A_{w'}$ in $W_a$ have the same orientation if one is a translation of the other; that is, if there exists $x \in \mathbb{Z}\Phi$ such that $w' = \tau_x w$ (we give more details in Section \ref{Sec orientation alcoves}).

While this operation is elementary at the group level, it becomes far from trivial when viewed in terms of the Shi vectors. Since studying affine Weyl groups via Shi vectors has been particularly successful, leading for example to a complete description of Kazhdan-Lusztig cells in type $A_n$, it is natural to investigate orientation from this perspective. To deepen our understanding of orientation from this perspective, we will characterize it in terms of the Shi variety and the first cohomology group $H^1(W, \mathbb{Z}\Phi)$.

To this end, we associate to each element $w \in W_a$ a section $\mathsf{s}_w$ of a natural short exact sequence, which, combined with the results of Section~\ref{coho in A}, leads to Theorem~\ref{coho eq orientation}.
Using this theorem along with the results of Section~\ref{concrete}, we derive the modular equations that characterize the orientations of the alcoves.
To motivate the reader, we conclude this section with a reformulation of Theorem~\ref{coho eq orientation} and Corollary~\ref{coro orientation} in type $A_n$, as follows:

\begin{theorem}
Let $w, w' \in W_a$.  Then $A_w$ and $A_{w'}$ have the same orientation if and only if the following two points are satisfied:
\begin{itemize}
\item[(i)]  The Shi vectors $\iota(w)$ and $\iota(w')$ are in the same irreducible component of $\widehat{X}_{W_a}$.
\item[(ii)]  The sections $\mathsf{s}_{w}$ and $\mathsf{s}_{w'}$ define the same element in $H^1(W,\mathbb{Z}\Phi)$.
\end{itemize}
\end{theorem}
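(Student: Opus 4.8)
The plan is to reduce the theorem to two independent facts, one geometric-combinatorial and one cohomological, that are proved (or can be proved) using the machinery recalled and developed earlier in the paper. First I would recall precisely what \emph{orientation} of an alcove means: for $A_w$ and $A_{w'}$ to have the same orientation means that for every $\alpha \in \Phi^+$ the two alcoves lie in the same component of $V \setminus H_{\alpha,0}$ up to the appropriate normalization, equivalently that the finite parts $\overline{w}$ and $\overline{w'}$ act the same way on the positive system relative to the base alcove; concretely this is the data of the ``sign vector'' $(\varepsilon_\alpha(w))_{\alpha\in\Phi^+}$ recording on which side of each hyperplane $H_{\alpha,0}$ the alcove $A_w$ sits. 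The key reduction is: two alcoves have the same orientation if and only if (a) this sign vector is the same, and (b) a certain finer invariant agrees. The paper's strategy, as announced in the introduction, is to repackage (a) as ``$\iota(w)$ and $\iota(w')$ lie in the same irreducible component of $\widehat{X}_{W_a}$'' and (b) as ``$\mathsf{s}_w$ and $\mathsf{s}_{w'}$ define the same class in $H^1(W,\mathbb{Z}\Phi)$''.

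\emph{Step 1 (components $\leftrightarrow$ sign vector).} I would show that the admitted vector $\lambda\in\mathbb{N}^m$ labelling the component $X_{W_a}[\lambda]$ containing $\iota(w)$ is a faithful encoding of the orientation sign vector of $A_w$. Using Theorem~\ref{TH central} and the description of the components from \cite{NC1} recalled in Section~\ref{background Shi var}, the component of $\widehat{X}_{W_a}$ a point lies in is determined precisely by which side of each wall $H_{\alpha,0}$ the corresponding alcove is on — because the $\mathbb{Z}\Phi$-action is transitive on each component and translations do not change orientation. Hence condition 1) of the theorem is equivalent to: $A_w$ and $A_{w'}$ have the same orientation sign vector. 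This is the geometric half and should follow directly from unwinding definitions plus Proposition~4.3 of \cite{NC1}.

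\emph{Step 2 (cohomology class $\leftrightarrow$ the finer invariant).} Here I would use the construction (to be given in Section~\ref{Sec orientation alcoves}) of the section $\mathsf{s}_w\colon W \to W_a$ attached to $w$, together with Theorem~\ref{coho eq orientation}, which already asserts that $\mathsf{s}_w$ and $\mathsf{s}_{w'}$ define the same class in $H^1(W,\mathbb{Z}\Phi)$ if and only if $A_w$ and $A_{w'}$ have the same orientation. So in fact condition 2) is, by that theorem, \emph{equivalent} on its own to having the same orientation in type $A$; the role of Step~1 is then to show that, in type $A$, condition 1) is \emph{implied by} having the same orientation and, conversely, that 1) together with 2) forces the same orientation. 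Concretely: orientation $\Rightarrow$ 1) by Step 1 (true in all types), orientation $\Rightarrow$ 2) by Theorem~\ref{coho eq orientation}, and conversely 1)$\wedge$2) $\Rightarrow$ orientation because 1) pins down the sign vector (Step 1) and 2) supplies exactly the residual torsor information that distinguishes orientations with the same sign vector — which is what $H^1(W,\mathbb{Z}\Phi)$ measures by the semidirect product description $W_a\simeq \mathbb{Z}\Phi\rtimes W$.

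\emph{Main obstacle.} The delicate point is the equivalence between ``same orientation'' and the conjunction 1)$\wedge$2), rather than just 2) alone: I must verify that the cohomological invariant in Theorem~\ref{coho eq orientation} does not already see all of the orientation, so that the component condition is genuinely needed, and simultaneously that together they are sufficient — i.e. that there is no third, hidden invariant. This amounts to checking that the map $w\mapsto(\text{component of }\iota(w),\ [\mathsf{s}_w])$ separates orientations exactly, which I expect to do by comparing cardinalities: count the number of distinct orientations of alcoves (a classical quantity, e.g. $|W|$ times the index of $\mathbb{Z}\Phi$ in the weight lattice, or the relevant count in type $A$), count the admitted vectors, and count $|H^1(W,\mathbb{Z}\Phi)|$ from Section~\ref{concrete}, and show the product matches after accounting for the overlap — this bookkeeping, specific to type $A$, is where the real work lies.
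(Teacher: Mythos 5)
There is a genuine gap, and it sits exactly where you locate ``the real work''. The paper's definition of having the same orientation is $w'=\tau_x w$ for some $x\in\mathbb{Z}\Phi$, equivalently $k(w',\alpha)-k(w,\alpha)=\langle x,\alpha^{\vee}\rangle$ for \emph{all} $\alpha\in\Phi^+$. Proposition \ref{bij coh} translates condition 2) into the existence of $x\in\mathbb{Z}\Phi$ with $k(w',\alpha_i)-k(w,\alpha_i)=\langle x,\alpha_i^{\vee}\rangle$ for the \emph{simple} roots only, so the forward implication (same orientation $\Rightarrow$ 1) and 2)) is indeed easy, as you say. The entire content of the converse is to propagate that identity from $\Delta$ to $\Phi^+$. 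The paper does this with Theorem \ref{polynome}: writing $k(w,\theta)=P_{\theta}(w)+\lambda_{\theta}(w)$ with $P_{\theta}$ linear and $P_{\theta}(\alpha_1^{\vee},\dots,\alpha_n^{\vee})=\theta^{\vee}$, condition 1) gives $\lambda_{\theta}(w)=\lambda_{\theta}(w')$, and subtracting and using linearity yields $k(w',\theta)-k(w,\theta)=P_{\theta}(\{\langle x,\alpha_i^{\vee}\rangle\})=\langle x,\theta^{\vee}\rangle$, hence $w'=\tau_x w$. Your proposal never invokes Theorem \ref{polynome} and replaces this step by a cardinality comparison. That comparison is not carried out and is not routine: since you only have the map from orientation classes to pairs (component, class in $H^1$) being well defined, injectivity would require showing that each component of $\widehat{X}_{W_a}$ contains exactly $n+1$ translation classes of alcoves realizing all $n+1$ elements of $H^1(W,\mathbb{Z}\Phi)$ --- a per-component statement essentially equivalent to the theorem itself. (The global identity $(\text{number of components})\cdot|H^1(W,\mathbb{Z}\Phi)|=n!\cdot(n+1)=|W|$ does hold in type $A$, but matching the cardinality of the target does not yield injectivity.)

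Two smaller misreadings compound this. First, the component containing $\iota(w)$ is \emph{not} a faithful encoding of an ``orientation sign vector'': it records the admitted vector $(\lambda_{\theta}(w))_{\theta}$, and a single component contains alcoves of many different orientations (see e.g.\ Figure \ref{orbits}, where one component of type $\widetilde{B}_2$ already contains two distinct $\mathbb{Z}\Phi$-orbits); only the implication (same orientation $\Rightarrow$ same component) holds, by $\mathbb{Z}\Phi$-stability of the components. Second, Theorem \ref{coho eq orientation} does not assert that condition 2) alone is equivalent to having the same orientation: it carries the hypothesis that $\iota(w)$ and $\iota(w')$ lie in the same component, and without that hypothesis 2) does not suffice, because the class $[\mathsf{s}_w]$ only sees the simple Shi coefficients modulo the image of ${}^{t}C_{\Phi}$ and says nothing about the coefficients $k(w,\theta)$ for non-simple $\theta$ until condition 1) and Theorem \ref{polynome} are brought in.
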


\begin{corollary}
Assume that $W=W(A_n)$. Let $w,w' \in W_a$. Then $A_w$ and $A_{w'}$ have the same orientation if and only if  one has in $\mathbb{Z}/(n+1)\mathbb{Z}$ the following equality (where $\overline{k}(w,\alpha):= \overline{k(w,\alpha)}$)
\begin{equation}
    \sum\limits_{j=1}^n j\overline{k}(w,\alpha_j) = \sum\limits_{j=1}^n j\overline{k}(w',\alpha_j).
\end{equation}
\end{corollary}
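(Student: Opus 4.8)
The plan is to read off the Corollary from Theorem~\ref{coho eq orientation} (in the reformulation stated above) together with the explicit description of $H^1(W,\mathbb{Z}\Phi)$ in type $A$ obtained in Section~\ref{concrete}. By that theorem, $A_w$ and $A_{w'}$ have the same orientation if and only if \emph{both} of the following hold: (1) the Shi vectors $\iota(w)$ and $\iota(w')$ lie in the same irreducible component of $\widehat{X}_{W_a}$; (2) the sections $\mathsf{s}_w$ and $\mathsf{s}_{w'}$ define the same class in $H^1(W,\mathbb{Z}\Phi)$. So the whole problem reduces to showing that, for $W=W(A_n)$, the conjunction of (1) and (2) is equivalent to the single congruence $\sum_{j=1}^n j\,\overline{k}(w,\alpha_j)\equiv\sum_{j=1}^n j\,\overline{k}(w',\alpha_j)$ in $\mathbb{Z}/(n+1)\mathbb{Z}$.

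The first step is to dispose of condition (2). By Proposition~\ref{bij coh} and the computations of Section~\ref{concrete}, in type $A_n$ one has $H^1(W,\mathbb{Z}\Phi)\cong\mathbb{Z}/(n+1)\mathbb{Z}$, the isomorphism being induced by the linear functional read off from the transpose of the Cartan matrix of $A_n$, namely $(c_j)_j\mapsto\sum_{j=1}^n j\,c_j\bmod(n+1)$ (here $(1,2,\dots,n)$ spans the relevant kernel modulo $n+1$, and its entries have gcd $1$). It then remains to unwind the construction of the section $\mathsf{s}_w$ from Section~\ref{Sec orientation alcoves} and check that its cohomology class, expressed through the Shi coefficients of $w$, is exactly $\sum_{j=1}^n j\,\overline{k}(w,\alpha_j)\bmod(n+1)$; this identifies (2) with the displayed congruence. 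A consistency check, which also pins down the behaviour needed for the next step, is that this residue is invariant under $w\mapsto\tau_x w$ for $x\in\mathbb{Z}\Phi$: indeed $k(\tau_x w,\alpha_j)=k(w,\alpha_j)+\langle x,\alpha_j^{\vee}\rangle$, and for $x=\alpha_i$ the correction $\sum_{j=1}^n j\,\langle\alpha_i,\alpha_j^{\vee}\rangle$ equals $0$ for $i<n$ and $n+1$ for $i=n$, hence vanishes modulo $n+1$ for every $i$; so the residue is constant on $\mathbb{Z}\Phi$-orbits and depends only on $\overline{w}$.

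The second step is to show that in type $A_n$ condition (1) adds nothing once (2) holds, i.e. that having cohomologous sections forces $\iota(w)$ and $\iota(w')$ into the same component (note this is logically forced by the Corollary, since $[(1)\text{ and }(2)]\Leftrightarrow(2)$ entails $(2)\Rightarrow(1)$). Concretely, I would use the classification of admitted vectors of \cite{NC1} in type $A$ to prove that the irreducible component $X_{W_a}[\lambda]$ containing $\iota(w)$ is governed by the residue of $\sum_{j=1}^n j\,k(w,\alpha_j)$ modulo $n+1$: translating $w$ within a fixed component leaves this residue unchanged (the $\mathbb{Z}\Phi$-invariance just checked), while passing to a different component changes it. Since $X_{W_a}[\lambda]\cap X_{W_a}[\gamma]=\emptyset$ for $\lambda\neq\gamma$, this gives (1)$\Leftrightarrow$(2) in type $A$ and completes the proof. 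Should it happen that (1) is not literally subsumed by (2) along these lines, the fallback is to prove the congruence implies both directly, by encoding the membership relation $\iota(w)\in X_{W_a}[\lambda]$ as linear relations among the $k(w,\alpha)$ and checking that they are controlled modulo $n+1$ by the same functional, the Cartan-matrix identity above being the key input in either case.

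The step I expect to be the genuine obstacle is precisely this comparison between the geometry of $\widehat{X}_{W_a}$ and the arithmetic of $H^1(W,\mathbb{Z}\Phi)$. The cohomological side is rigid: it only sees $\overline{w}$ and collapses to one linear congruence modulo $n+1$; but the irreducible components are \emph{a priori} indexed by the whole family of admitted vectors in $\mathbb{N}^m$, so the core of the argument is to show that in type $A$ this family lines up with the $n+1$ cohomology classes. This alignment, available so cleanly only in type $A$ because of the identity $\sum_j j\,\langle\alpha_i,\alpha_j^{\vee}\rangle\equiv 0\pmod{n+1}$ for the $A_n$ Cartan matrix, is exactly what makes type $A$ the favourable case highlighted in the introduction.
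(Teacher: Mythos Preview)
The paper's own argument is a single line: combine Theorem~\ref{coho eq orientation} with the type-$A$ computation of Proposition~\ref{H1 dans A}. That is exactly your first step, and there your proposal matches the paper verbatim.

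The trouble is your second step, and the trouble is not really yours. The displayed Corollary in the introduction is a slightly careless restatement of Corollary~\ref{coro orientation}, which carries the standing hypothesis that $\iota(w)$ and $\iota(w')$ lie in the same irreducible component of $\widehat{X}_{W_a}$. The paper never drops that hypothesis in its actual proofs, and in fact it \emph{cannot} be dropped. Already in type $A_2$, take $w=e$ and $w'=w_0=s_1s_2s_1$. Then $k(e,\alpha_i)=0$ and $k(w_0,\alpha_i)=-1$ for $i=1,2$, so both sides of the congruence equal $0$ in $\mathbb{Z}/3\mathbb{Z}$; but $k(w_0,\alpha_1+\alpha_2)=-1$ forces $\lambda_{\alpha_1+\alpha_2}(w_0)=1\neq 0=\lambda_{\alpha_1+\alpha_2}(e)$, so $e$ and $w_0$ lie in different components, and $A_e$, $A_{w_0}$ do not have the same orientation (their finite parts differ). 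Hence the implication $(2)\Rightarrow(1)$ that you set out to prove is false, and no refinement of the Cartan identity $\sum_j j\langle\alpha_i,\alpha_j^{\vee}\rangle\equiv 0\pmod{n+1}$ will manufacture it: that identity only shows the residue is constant on $\mathbb{Z}\Phi$-orbits, not that it separates components.

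With the component hypothesis reinstated (as in Corollary~\ref{coro orientation}), your step~1 is already the entire proof, and it agrees with the paper.
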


\section{Background about the Shi variety}\label{background Shi var}

 We recall in this section some necessary material. If $\mathcal{I}$ is an ideal of a polynomial ring $k[X_1,\dots, X_r]$ we denote $V(\mathcal{I})=\{x \in k^r~|~P(x)=0~\forall~P \in \mathcal{I}\}$.
 All the following definitions were introduced in \cite[Section 4]{NC1}. We denote $\mathbb{Z}[X_{\Delta}] := \mathbb{Z}[X_{\alpha_1},\dots ,X_{\alpha_n}]$ and $\mathbb{Z}[X_{\Phi^+}] := \mathbb{Z}[X_{\beta_1},\dots ,X_{\beta_m}]$. For $w \in W_a$ and $Q \in \mathbb{Z}[X_{\Delta}]$ we denote 
\begin{equation}
   Q(w):=Q(k(w,\alpha_1),\dots ,k(w,\alpha_n)). 
\end{equation}

For short, when we need to involve the simple Shi coefficients $k(w,\alpha_i)$, we will often write $Q(w) = Q(\{k(w,\alpha_i)\},~\alpha_i \in \Delta) $ or just $Q(\{k(w,\alpha_i)\})$ if there is no confusion. For a concrete example of the notions involved above, see~\cite[Example~4.2]{NC1}.

\medskip

The following theorem is Shi's characterization of the elements $w \in W_a$ by their $\Phi^+$-tuples of integers.
\begin{theorem}[{\cite[Theorem 5.2]{JYS1}}]\label{thJYS1} 
Let $A = \bigcap\limits_{\alpha \in \Phi^+} H^1_{\alpha,k_{\alpha}}$ with $k_{\alpha} \in \mathbb{Z}$. Then $A$ is an alcove, if and only if, for all $\alpha$, $\beta \in \Phi^+$ satisfying  $\alpha + \beta \in \Phi^+$, we have the following inequality
$$
||\alpha||^2k_{\alpha} + ||\beta||^2k_{\beta} +1 \leq ||\alpha + \beta||^{2}(k_{\alpha+\beta} +1) \leq ||\alpha||^2k_{\alpha} + ||\beta||^2k_{\beta} + ||\alpha||^2+ ||\beta||^2 + ||\alpha+\beta||^2 -1.
$$

\end{theorem}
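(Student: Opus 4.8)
The plan is to separate the statement into the formal equivalence ``$A$ is an alcove $\iff A\neq\emptyset$'' and the genuine content, the feasibility statement ``$A\neq\emptyset\iff$ the displayed inequalities hold'', whose easy direction is a short computation and whose hard direction is the real point. For the reduction, put $S_\alpha:=H^1_{\alpha,k_\alpha}$, so that $A=\bigcap_{\alpha\in\Phi^+}S_\alpha$ is open and convex and, for every $\beta\in\Phi$, the form $\langle\cdot,\beta^\vee\rangle$ takes no integer value on $A$ (its value lies in $(k_\beta,k_\beta+1)$ if $\beta\in\Phi^+$, and $\langle\cdot,\beta^\vee\rangle=-\langle\cdot,(-\beta)^\vee\rangle$ if $\beta\in\Phi^-$). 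Hence a nonempty $A$ is a connected subset of $V\setminus\bigcup_{\beta\in\Phi^+,\,\ell\in\mathbb Z}H_{\beta,\ell}$, so it lies in a single alcove $A_0$; by the description of alcoves recalled above, $A_0=\bigcap_{\alpha\in\Phi^+}H^1_{\alpha,m_\alpha}$ for some integers $m_\alpha$, and $\emptyset\neq A\subseteq A_0$ forces $(k_\alpha,k_\alpha+1)\cap(m_\alpha,m_\alpha+1)\neq\emptyset$, i.e. $k_\alpha=m_\alpha$ for all $\alpha$, so that $A=A_0$. Since alcoves are nonempty, this proves the reduction.

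For the ``only if'' direction I would fix $x\in A$ and $\alpha,\beta\in\Phi^+$ with $\gamma:=\alpha+\beta\in\Phi^+$, and use $\langle x,\delta\rangle=\tfrac12\|\delta\|^2\langle x,\delta^\vee\rangle$ to rewrite $\langle x,\gamma\rangle=\langle x,\alpha\rangle+\langle x,\beta\rangle$ as
\[
\|\gamma\|^2\langle x,\gamma^\vee\rangle=\|\alpha\|^2\langle x,\alpha^\vee\rangle+\|\beta\|^2\langle x,\beta^\vee\rangle .
\]
Since $x\in S_\alpha\cap S_\beta\cap S_\gamma$, this common quantity lies simultaneously in $\bigl(\|\gamma\|^2 k_\gamma,\ \|\gamma\|^2(k_\gamma+1)\bigr)$ and in $\bigl(\|\alpha\|^2 k_\alpha+\|\beta\|^2 k_\beta,\ \|\alpha\|^2 k_\alpha+\|\beta\|^2 k_\beta+\|\alpha\|^2+\|\beta\|^2\bigr)$, so these two open intervals overlap; and because the numbers $\|\delta\|^2$ are positive integers in the normalisation fixed for $\Phi$ (so all quantities above are integers), the two overlap conditions are exactly the two displayed inequalities.

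The ``if'' direction is the substantive one. I would enumerate $\Phi^+=\{\gamma_1,\dots,\gamma_m\}$ by non-decreasing height, with $\gamma_1,\dots,\gamma_n=\Delta$, recalling that every non-simple positive root equals $\gamma_p+\gamma_q$ for positive roots of smaller height; in the coordinates $y_i=\langle x,\alpha_i^\vee\rangle$ the constraints from $\gamma_1,\dots,\gamma_n$ cut out a nonempty open box. I would then add the remaining constraints one root at a time, maintaining that $P_j:=\{x:\langle x,\gamma_i^\vee\rangle\in(k_{\gamma_i},k_{\gamma_i}+1),\ i\le j\}$ is nonempty; since $P_m=A$ this finishes the proof. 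In the step from $P_j$ to $P_{j+1}$, writing $\gamma:=\gamma_{j+1}=\alpha+\beta$ with $\alpha,\beta$ already processed, the displayed relation shows that on $P_j$ the quantity $\|\gamma\|^2\langle\cdot,\gamma^\vee\rangle$ sweeps out an open subinterval of $\bigl(\|\alpha\|^2 k_\alpha+\|\beta\|^2 k_\beta,\ \|\alpha\|^2 k_\alpha+\|\beta\|^2 k_\beta+\|\alpha\|^2+\|\beta\|^2\bigr)$, and the hypothesis says exactly that the target interval $\bigl(\|\gamma\|^2 k_\gamma,\ \|\gamma\|^2(k_\gamma+1)\bigr)$ meets that outer interval.

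The step I expect to be the main obstacle is precisely this last one: one must guarantee that the subinterval \emph{actually realised} on $P_j$ reaches the part of the outer interval occupied by the target, and a careless choice of $P_j$ makes it miss. This is where the hypothesis is genuinely used: every strip $H^1_{\delta,k_\delta}$ has width $\|\delta\|^2$ independent of $k_\delta$, and the inequalities pin $\|\gamma\|^2 k_\gamma$ to one of two consecutive admissible values relative to $\|\alpha\|^2 k_\alpha+\|\beta\|^2 k_\beta$ (in the simply-laced case, $k_{\alpha+\beta}\in\{k_\alpha+k_\beta,\,k_\alpha+k_\beta+1\}$). I would therefore strengthen the induction, carrying along not just a point of $P_j$ but a prescription --- forced, for each processed root $\delta$, by which of its two admissible values $\|\delta\|^2 k_\delta$ realises --- of whether $\langle\cdot,\delta^\vee\rangle$ should sit near the bottom or near the top of its strip at the chosen point, and then showing this prescription is consistent along the root poset. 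Equivalently, writing $x=u_0+v$ with $u_0$ the ``naive'' functional determined by $u_0(\alpha_i)=k_{\alpha_i}$ (up to length factors), one is reduced to: choose $w_{\alpha_i}\in(0,1)$ freely, set $w_{\alpha+\beta}=w_\alpha+w_\beta-\eta_{\alpha,\beta}$ with $\eta_{\alpha,\beta}\in\{0,1\}$ (a length-weighted version in the non-simply-laced case), and arrange that all $w_\gamma\in(0,1)$; producing such a choice consistently across every decomposition $\gamma=\alpha+\beta$ is the heart of Shi's argument and uses the combinatorics of the root poset essentially. (An equivalent linear-algebraic route: restrict to $\operatorname{span}\Phi$, observe that the closed region $\bigcap_\gamma\{k_\gamma+\varepsilon\le\langle\cdot,\gamma^\vee\rangle\le k_\gamma+1-\varepsilon\}$ lies in the compact box cut out by the simple roots, apply Helly's theorem to reduce its nonemptiness to the nonemptiness of the analogous intersections over the ``circuits'' of the configuration $\{\pm\gamma^\vee:\gamma\in\Phi^+\}$, and check that each circuit inequality is a nonnegative combination of the displayed pairwise ones by telescoping the relations $\|\alpha\|^2\alpha^\vee+\|\beta\|^2\beta^\vee=\|\gamma\|^2\gamma^\vee$, the input needed being that these span all linear relations among the positive coroots.)
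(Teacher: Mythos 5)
The paper does not prove this statement: it is quoted verbatim from Shi (\cite{JYS1}, Theorem 5.2) as an imported result, so there is no in-paper argument to compare yours against; what follows assesses your attempt on its own terms. Your reduction of ``$A$ is an alcove'' to ``$A\neq\emptyset$'' is correct (a nonempty $A$ avoids every hyperplane $H_{\beta,\ell}$, hence lies in a single alcove $\bigcap_\alpha H^1_{\alpha,m_\alpha}$, and the overlap of unit open intervals with integer endpoints forces $k_\alpha=m_\alpha$), and your ``only if'' direction is complete: the identity $\|\gamma\|^2\langle x,\gamma^\vee\rangle=\|\alpha\|^2\langle x,\alpha^\vee\rangle+\|\beta\|^2\langle x,\beta^\vee\rangle$ together with integrality of the $\|\delta\|^2$ converts the nonempty overlap of the two open intervals into exactly the two displayed inequalities.

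The ``if'' direction, however, is where the entire content of the theorem lies, and there you have a genuine gap which you yourself flag. Pairwise compatibility of the strip conditions along each relation $\gamma=\alpha+\beta$ does not by itself yield a common point of all $m$ strips: in your induction, $P_j$ is some nonempty open polytope on which $\|\gamma\|^2\langle\cdot,\gamma^\vee\rangle$ sweeps out \emph{some} open subinterval of the outer interval $\bigl(\|\alpha\|^2k_\alpha+\|\beta\|^2k_\beta,\ \|\alpha\|^2k_\alpha+\|\beta\|^2k_\beta+\|\alpha\|^2+\|\beta\|^2\bigr)$, and nothing you have said prevents that subinterval from missing the target $\bigl(\|\gamma\|^2k_\gamma,\|\gamma\|^2(k_\gamma+1)\bigr)$ even though the hypothesis guarantees the target meets the outer interval --- containment in an interval that meets the target does not imply meeting the target. (Already in type $A_3$ the highest root has two decompositions as a sum of positive roots, and the realised range of $\langle\cdot,\gamma^\vee\rangle$ on $P_j$ is squeezed by all of them simultaneously; it is exactly this interaction that must be controlled.) Your two proposed repairs --- carrying a consistent ``top/bottom of strip'' prescription along the root poset, or the Helly/LP-duality route requiring that every circuit relation among the positive coroots be generated by the rank-two relations $\|\alpha\|^2\alpha^\vee+\|\beta\|^2\beta^\vee=\|\gamma\|^2\gamma^\vee$ and that telescoping the strict integer-shifted inequalities loses nothing --- each \emph{state} the missing lemma rather than prove it. As written, the sufficiency direction is a plan for a proof, not a proof; to make it self-contained you would need to supply that consistency lemma or, as the paper does, simply cite Shi.
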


\begin{remark}
In type $A$ Theorem \ref{thJYS1} has an easy reformulation: Let $A = \bigcap_{\alpha \in \Phi^+} H^1_{\alpha,k_{\alpha}}$ with $k_{\alpha} \in \mathbb{Z}$. Then $A$ is an alcove, if and only if, for all $\alpha$, $\beta \in \Phi^+$ satisfying  $\alpha + \beta \in \Phi^+$, we have the following inequality
\begin{equation}\label{Shi ineq A}
k_{\alpha} + k_{\beta}  \leq k_{\alpha+\beta}  \leq k_{\alpha} + k_{\beta} +1.
\end{equation}
\end{remark}

The following theorem decomposes the Shi coefficients as polynomial equations.
\begin{theorem}[{\cite[Th. 4.1, Lem. 4.1]{NC1}}]\label{polynome}
Let $w \in W_a$. Then for all $\beta \in \Phi^+$ there exists a linear polynomial $P_{\beta} \in \mathbb{Z}[X_{\Delta}]$  with positive coefficients and  $\lambda_{\beta}(w) \in \llbracket 0, \mathsf{h}(\beta^{\vee})-1\rrbracket$ such that 
\begin{equation} \label{eq:P}
k(w,\beta) = P_{\beta}(w) + \lambda_{\beta}(w).
\end{equation}
Moreover, the polynomial $P_{\beta}$ satisfies 
$$
\beta^{\vee} = P_{\beta}(\alpha_1^{\vee},\dots ,\alpha_n^{\vee}).
$$ 

\end{theorem}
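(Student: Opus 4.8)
The plan is to build $P_\theta$ directly from the coroot lattice and then identify $\lambda_\theta(w)$ as the floor of a controlled error term. Recall the standard fact that every positive coroot expands uniquely as $\theta^\vee = \sum_{i=1}^n c_i(\theta)\,\alpha_i^\vee$ with $c_i(\theta) \in \mathbb{Z}_{\geq 0}$, and that by definition $h(\theta^\vee) = \sum_{i=1}^n c_i(\theta)$. I would then set
\[
P_\theta := \sum_{i=1}^n c_i(\theta)\, X_{\alpha_i} \in \mathbb{Z}[X_\Delta],
\]
a linear polynomial whose (nonzero) coefficients are positive, which by construction satisfies $P_\theta(\alpha_1^\vee,\dots,\alpha_n^\vee) = \theta^\vee$. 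Note that $P_\theta$ depends only on $\theta$, never on $w$; all of the $w$-dependence will be absorbed into the remainder.

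The second step is to translate Shi coefficients into floors of pairings. Fix any point $x \in A_w$. Since $A_w$ is a connected component of the complement of all the hyperplanes $H_{\alpha,k}$, the number $\langle x, \alpha^\vee\rangle$ is never an integer, and the defining relation $A_w = \bigcap_{\alpha\in\Phi^+}H^1_{\alpha,k(w,\alpha)}$ gives $k(w,\alpha) < \langle x,\alpha^\vee\rangle < k(w,\alpha)+1$; hence $k(w,\alpha) = \lfloor\langle x,\alpha^\vee\rangle\rfloor$ for every $\alpha\in\Phi^+$. Put $t_i := \langle x,\alpha_i^\vee\rangle - k(w,\alpha_i)$, so $t_i \in (0,1)$ for $i=1,\dots,n$. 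Using bilinearity of $\langle -,-\rangle$ together with $\theta^\vee = \sum_i c_i(\theta)\alpha_i^\vee$,
\[
\langle x,\theta^\vee\rangle = \sum_{i=1}^n c_i(\theta)\langle x,\alpha_i^\vee\rangle = \sum_{i=1}^n c_i(\theta)\,k(w,\alpha_i) + \sum_{i=1}^n c_i(\theta)\,t_i = P_\theta(w) + \sum_{i=1}^n c_i(\theta)\,t_i .
\]
Since $0 < \sum_i c_i(\theta)\,t_i < \sum_i c_i(\theta) = h(\theta^\vee)$, applying $\lfloor\cdot\rfloor$ yields $k(w,\theta) = \lfloor\langle x,\theta^\vee\rangle\rfloor = P_\theta(w) + \big\lfloor\sum_i c_i(\theta)\,t_i\big\rfloor$, so that
\[
\lambda_\theta(w) := k(w,\theta) - P_\theta(w) = \Big\lfloor \sum_{i=1}^n c_i(\theta)\,t_i\Big\rfloor \in \llbracket 0,\, h(\theta^\vee)-1\rrbracket,
\]
which is exactly \eqref{eq:P} together with the claimed bound.

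There is no deep obstacle; the points requiring care are minor. First, the strict inclusions $t_i\in(0,1)$ are essential and rest precisely on the fact that an alcove is an open region disjoint from all reflection hyperplanes (so no pairing $\langle x,\alpha_i^\vee\rangle$ is an integer). Second, one should observe that $\lambda_\theta(w)$ is well defined independently of the auxiliary point $x\in A_w$: indeed $k(w,\theta)$ and $P_\theta(w) = \sum_i c_i(\theta)k(w,\alpha_i)$ are intrinsic to $w$, so the identity $\lambda_\theta(w)=k(w,\theta)-P_\theta(w)$ pins it down and the expression $\lfloor\sum_i c_i(\theta)t_i\rfloor$ is merely a convenient computation of it. Finally, the input "$\theta^\vee$ is a $\mathbb{Z}_{\geq 0}$-combination of simple coroots with coefficient-sum equal to its height" is the only external root-system fact used, and it is entirely standard.
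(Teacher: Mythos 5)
Your argument is correct. Note that the paper under review does not actually prove this statement --- it imports it from \cite{NC1} (Theorem 4.1 and Lemma 4.1) --- so there is no internal proof to compare against; judged on its own, your derivation is sound and self-contained: taking $P_\theta=\sum_i c_i(\theta)X_{\alpha_i}$ from the expansion $\theta^\vee=\sum_i c_i(\theta)\alpha_i^\vee$, using that $k(w,\alpha)=\lfloor\langle x,\alpha^\vee\rangle\rfloor$ for any $x\in A_w$ (which is exactly the definition of the strips $H^1_{\alpha,k}$ together with the openness of alcoves), and bounding the error term $\sum_i c_i(\theta)t_i$ strictly between $0$ and $h(\theta^\vee)$ gives precisely \eqref{eq:P} with $\lambda_\theta(w)\in\llbracket 0,h(\theta^\vee)-1\rrbracket$, and the identity $\theta^\vee=P_\theta(\alpha_1^\vee,\dots,\alpha_n^\vee)$ holds by construction. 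You also correctly flag the two delicate points (strictness of $t_i\in(0,1)$ and independence of the auxiliary point $x$), and the only external input --- that a positive coroot is a nonnegative integer combination of simple coroots whose coefficient sum is its height --- is standard.
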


\begin{definition} Let $\beta \in \Phi^+$. Write $I_{\beta} := \llbracket 0, \mathsf{h}(\beta^{\vee})-1 \rrbracket$. Notice that if $\beta$ is a simple root then $I_{\beta}=\{0\}$. For any root $\beta \in \Delta$ we set $P_{\beta} = X_{\beta}$ and $\lambda_{\beta}=0$. We denote by $P_{\beta}[\lambda_{\beta}]$ the polynomial $P_{\beta}+ \lambda_{\beta}-X_{\beta} \in \mathbb{Z}[X_{\Phi^+}]$. We define the ideal $J_{W_a}$ of $\mathbb{R}[X_{\Phi^+}]$ as ${J_{W_a} := \sum\limits_{\alpha \in \Phi^+} ~\langle \prod\limits_{\lambda_{\alpha} \in I_{\alpha}} P_{\alpha}[\lambda_{\alpha}] \rangle}$. We define the affine variety $X_{W_a}$ to be:
\begin{equation}
      X_{W_a} := V(J_{W_a}).
\end{equation}
\end{definition}

\begin{definition}\label{admissible vector}
 We say that $v=(v_{\alpha})_{\alpha \in \Phi^+} \in \mathbb{N}^m$ is an \emph{admissible vector} (or just \emph{admissible}) if it satisfies the boundary conditions, that is if for all $\alpha \in \Phi^+$ one has $v_{\alpha} \in I_{\alpha}$. For instance, all the $\lambda:=(\lambda_{\alpha})_{\alpha \in \Phi^+}$ coming from the polynomials $P_{\alpha}[\lambda_{\alpha}]$ give rise to admissible vectors. Furthermore, each admissible vector arises this way. We will write $\lambda$ instead of $(\lambda_{\alpha})_{\alpha \in \Phi^+}$.
\end{definition}

 \begin{definition}
   Let $\lambda$ be an admissible vector. We denote 
  $$
  J_{W_a}[\lambda] := \sum\limits_{\alpha \in \Phi^+} \langle P_{\alpha}[\lambda_{\alpha}] \rangle = \langle P_{\alpha}[\lambda_{\alpha}],~\alpha \in \Phi^+\rangle,
 $$ 			
$$
X_{W_a}[\lambda] := V(J_{W_a}[\lambda]).
$$
 \end{definition}

 \begin{definition}\label{admitted vector}
We will denote $S[W_a]$ as the system of all the inequalities coming from Theorem \ref{thJYS1}. Let $\lambda$ be an admissible vector. We say that $\lambda$ is \emph{admitted} if it satisfies the system $S[W_a]$. Let $w \in W_a$. For an admitted vector $\lambda$, we write $\lambda(w) = \lambda$ if and only if $\iota(w) \in \widehat{X}_{W_a}[\lambda]$. See also \cite[Examples 4.2, 4.3, 4.4]{NC1} for good examples.
\end{definition}

If $Y \subset \mathbb{R}^m$ we denote by $Y(\mathbb{Z})$ the set of integral points of $Y$. Finally, we have the following result which gives the parameterization of the elements of $H^0(\widehat{X}_{W_a})$ in terms of the admitted vectors.

\begin{theorem}[{\cite[Theorem 4.3]{NC1}}]\label{TH central}
The map $\iota : {W_a \longrightarrow X_{W_a}(\mathbb{Z})}$ defined by $w \longmapsto (k(w,\alpha))_{\alpha \in \Phi^+}$ induces by corestriction a bijective map from $W_a$ to the integral points of a subvariety of $X_{W_a}$, denoted $\widehat{X}_{W_a}$, which we call the Shi variety of $W_a$. This subvariety decomposes as follows $$\widehat{X}_{W_a} = \bigsqcup\limits_{\lambda~\text{admitted}} X_{W_a}[\lambda].$$
\end{theorem}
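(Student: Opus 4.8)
The plan is to deduce everything from the decomposition of Shi coefficients in Theorem~\ref{polynome} together with Shi's inequalities of Theorem~\ref{thJYS1}; the one genuinely substantial ingredient will be a homogeneity identity among the polynomials $P_\theta$.

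First I would show that $\iota$ is well defined and that its image lies in $\bigsqcup_{\lambda~\text{admitted}} X_{W_a}[\lambda]$. Given $w\in W_a$, Theorem~\ref{polynome} writes $k(w,\theta)=P_\theta(w)+\lambda_\theta(w)$ with $\lambda_\theta(w)\in I_\theta$ for each $\theta\in\Phi^+$; setting $\lambda(w):=(\lambda_\theta(w))_{\theta\in\Phi^+}$, this says precisely that $P_\theta[\lambda_\theta(w)](\iota(w))=0$ for all $\theta$, whence $\iota(w)\in V(J_{W_a}[\lambda(w)])=X_{W_a}[\lambda(w)]\subseteq X_{W_a}$. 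To see that $\lambda(w)$ is admitted I would use that, for $\alpha,\beta\in\Phi^+$ with $\gamma:=\alpha+\beta\in\Phi^+$, the relation $||\gamma||^2\gamma^\vee=2\gamma=||\alpha||^2\alpha^\vee+||\beta||^2\beta^\vee$, combined with $\theta^\vee=P_\theta(\alpha_1^\vee,\dots,\alpha_n^\vee)$ and the linear independence of the $\alpha_i^\vee$, forces the polynomial identity $||\gamma||^2P_\gamma=||\alpha||^2P_\alpha+||\beta||^2P_\beta$. Substituting $k(w,\theta)=P_\theta(w)+\lambda_\theta(w)$ for $\theta\in\{\alpha,\beta,\gamma\}$ then makes the polynomial contributions cancel, giving $||\gamma||^2k(w,\gamma)-||\alpha||^2k(w,\alpha)-||\beta||^2k(w,\beta)=||\gamma||^2\lambda_\gamma(w)-||\alpha||^2\lambda_\alpha(w)-||\beta||^2\lambda_\beta(w)$; hence Shi's inequalities for the genuine alcove $A_w$ translate verbatim into the inequalities of $S[W_a]$ evaluated at $\lambda(w)$, so $\lambda(w)$ is admitted. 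Injectivity of $\iota$ is then immediate, since the alcove $A_w=\bigcap_{\alpha\in\Phi^+}H^1_{\alpha,k(w,\alpha)}$ is determined by $\iota(w)$ and $W_a$ acts regularly on alcoves.

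Next I would check that $\widehat{X}_{W_a}:=\bigsqcup_{\lambda~\text{admitted}}X_{W_a}[\lambda]$ is genuinely a subvariety of $X_{W_a}$: if $v$ lies in $X_{W_a}[\lambda]\cap X_{W_a}[\mu]$ then $v_\alpha=P_\alpha(v)+\lambda_\alpha=P_\alpha(v)+\mu_\alpha$ for every $\alpha$ (where $P_\alpha(v)$ denotes the evaluation at the simple coordinates of $v$), forcing $\lambda=\mu$, so the strata $X_{W_a}[\lambda]$ with $\lambda$ admissible are pairwise disjoint affine subspaces, finite in number because each $I_\alpha$ and $\Phi^+$ are finite; a finite union of such is Zariski closed (in fact these are precisely the irreducible components of $X_{W_a}$). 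For surjectivity onto $\widehat{X}_{W_a}(\mathbb{Z})$, take $\lambda$ admitted and $v\in X_{W_a}[\lambda](\mathbb{Z})$, so that $v_\alpha=P_\alpha(v)+\lambda_\alpha$ for all $\alpha$. Running the identity of the previous paragraph backwards shows that $\lambda$ satisfies $S[W_a]$ if and only if the integers $(v_\alpha)_{\alpha\in\Phi^+}$ satisfy Shi's inequalities, so by Theorem~\ref{thJYS1} the set $A:=\bigcap_{\alpha\in\Phi^+}H^1_{\alpha,v_\alpha}$ is an alcove; thus $A=A_w$ for a unique $w\in W_a$, and since the special strip $H^1_{\alpha,k}$ containing a given alcove is unique one gets $k(w,\alpha)=v_\alpha$ for every $\alpha$, i.e.\ $\iota(w)=v$. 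Combining these steps, $\iota$ corestricts to a bijection from $W_a$ to $\widehat{X}_{W_a}(\mathbb{Z})$.

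The step I expect to be the main obstacle — indeed the only one that is not pure bookkeeping — is establishing the dictionary used twice above between Shi's inequalities on the coefficients $k(w,\alpha)$ and the system $S[W_a]$ on admissible vectors: one must verify that the specific integer linear combinations occurring in Theorem~\ref{thJYS1} are exactly those that annihilate the contributions of the polynomials $P_\theta$, which is what the homogeneity relation $||\gamma||^2P_\gamma=||\alpha||^2P_\alpha+||\beta||^2P_\beta$ provides. Granting this, together with Theorem~\ref{polynome} and the regularity of the $W_a$-action on alcoves, the remaining assertions — well-definedness of $\iota$, injectivity, disjointness of the strata, and the identification of $\widehat{X}_{W_a}$ — are formal.
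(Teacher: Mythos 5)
The paper does not actually prove this statement: it is imported verbatim as background from \cite{NC1} (Theorem 4.3 there), so there is no in-text proof to compare yours against line by line. Judged on its own, your reconstruction is correct and uses exactly the ingredients the paper makes available. The step you single out as the crux, the identity $||\gamma||^2P_\gamma=||\alpha||^2P_\alpha+||\beta||^2P_\beta$ for $\gamma=\alpha+\beta\in\Phi^+$, does hold: since $||\theta||^2\theta^\vee=2\theta$ one gets $||\gamma||^2\gamma^\vee=||\alpha||^2\alpha^\vee+||\beta||^2\beta^\vee$, and because each $P_\theta$ is necessarily a homogeneous linear form (a nonzero constant term would be incompatible with the vector identity $\theta^\vee=P_\theta(\alpha_1^\vee,\dots,\alpha_n^\vee)$ of Theorem \ref{polynome}), linear independence of the simple coroots upgrades the coroot identity to an identity of polynomials. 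That is precisely what makes the inequalities of Theorem \ref{thJYS1} evaluated at $\iota(w)$ equivalent, in both directions, to the system $S[W_a]$ evaluated at $\lambda(w)$, which is the only nontrivial point; the remaining steps (disjointness of the strata $X_{W_a}[\lambda]$, injectivity via the regular action of $W_a$ on alcoves, surjectivity via Theorem \ref{thJYS1} applied to an integral point of an admitted component) are routine and correctly handled. Your parenthetical remark that the $X_{W_a}[\lambda]$ for $\lambda$ admissible are the irreducible components of $X_{W_a}$ is a harmless aside and not needed for the statement.
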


\section{Cohomology of Weyl groups}\label{section cohomo}
In this section we show how the irreducible components of the Shi variety together with the first  cohomology group of $W$ can be used to express the orientation of the alcoves.

Let  $S=\{s_1,\dots ,s_n\}$ be the set of Coxeter generators of $W$ associated to $\Delta$ and $m_{ij}:=\text{ord}(s_is_j)$. 
 The Cartan matrix $C_{\Phi}=(h_{ij})_{i,j}$ associated to $\Phi$ is the matrix of $\mathrm{GL}_n(\mathbb{R})$ defined by $h_{ij} = \langle \alpha_i, \alpha_j^{\vee} \rangle$. It is well known that $C_{\Phi}$ is invertible since it is a change of basis matrix.

\medskip

The decomposition $W_a \simeq \mathbb{Z}\Phi \rtimes W$ induces the following short exact sequence 
\begin{equation}\label{eq seq Z}
\xymatrix{
1  \ar[r]  & \mathbb{Z}\Phi \ar[r]^{i~\text{~~}~\text{~~}} & \mathbb{Z}\Phi \rtimes W \ar[r]^{~\text{~~}\text{~~}~~\pi} & W \ar[r] & 1,
}
\end{equation}

 where $i$ and $\pi$ are the morphisms defined by $i(x) =\tau_x$ and $\pi(w) = \overline{w}$.
 
\subsection{Background about group cohomology}\label{back coho}

Let $A$ be a $G$-module.
We define $A^G$ to be the submodule of $G$-invariants, that is the subgroup of $A$ defined by $A^G := \{ a \in A~|~ga=a ~\text{for all} ~g \in G \}$. The functor $F : A \mapsto A^G$ from the category of $G$-modules to the category of abelian groups is covariant and left-exact. Therefore, we can define its right derived functors $R^iF$ and by definition the $i$-th cohomology group of $G$ with coefficients in $A$ is $H^n(G, A) := R^nF(A)$.

Another description of group cohomology in degree 1 is given by 1-cocycles and 1-coboundaries. A $n$-cochain is  a map $f : G^n \rightarrow A $ and the set of $n$-cochains is denoted by $C^n(G,A)$. For $n=0$ we have $C^0(G,A) = A$ and we denote by $f_a$ ($a \in A$)  the elements of $C^0(G,A)$. The set $C^n(G,A)$, endowed with the addition induced from $A$, is a group.

 The boundary map  $\partial_0 : C^0(G,A) \rightarrow C^{1}(G,A)$ is defined by $\partial_0f_a(x) = xa-a$ and  the boundary map $\partial_1 : C^1(G,A) \rightarrow C^{2}(G,A)$ is defined by $\partial_1 f(x,y) = xf(y)-f(xy)+f(x)$. The 1-cocycles are the elements $f \in C^1(G,A)$ satisfying $\partial_1 f = 0$, and the 1-coboundaries are the elements $f \in C^1(G,A)$ such that $f=\partial_0 f_a$ for some $a \in A$. The set of 1-cocycles is denoted by $Z^1(G,A)$ and the set of 1-coboundaries by $B^1(G,A)$. These two sets are subgroups of $C^1(G,A)$ and $B^1(G,A) \subset Z^1(G,A)$. 
 
 Therefore, we can define $H^1(G,A)$ by $$H^1(G,A) = Z^1(G,A) / B^1(G,A).$$

\medskip

Let $B$ be a $G$-module, i.e. an abelian group endowed with an action of $G$ by automorphisms; equivalently, a morphism $\varphi:G\rightarrow \text{Aut}(B)$, where the action of $G$ is given by $g\cdot b:=\varphi(g)(b)$. Let us consider the following exact sequence:
$$
\xymatrix{
1  \ar[r]  & B \ar[r]^{i~\text{~~}~\text{~~}} & B \rtimes_{\varphi} G \ar[r]^{~~\text{~}~\text{~}~\pi} & G \ar[r] & 1.
}
$$

A section of $\pi$ is a morphism $\mathsf{s} : G \rightarrow B 
    \rtimes_{\varphi} G$ such that $\pi \circ \mathsf{s} = id_G$. We denote by $Sec_B(\pi)$ the set of sections of $\pi$. If $B=\mathbb{Z}\Phi$ we will just write $Sec(\pi)$. 

Two sections $\mathsf{s}$ and $\mathsf{s}'$ are called $B$-conjugate if there exists $b \in B$ such that for all $g \in G$ we have $\mathsf{s}'(g) = i(b)\mathsf{s}(g)i(b)^{-1}$. We denote this by $\mathsf{s} \thicksim_B\mathsf{s}'$, or just $\mathsf{s} \thicksim  \mathsf{s}'$ if there is no confusion.

The following result is also well known and will be of interest for us (see \cite[Theorem 4.4]{JPS},  for a good reference).

\begin{theorem}\label{bij H1}
The set of sections up to $B$-conjugacy of the exact sequence 
$$
\xymatrix{
1  \ar[r]  & B \ar[r]^{i~\text{~~}~\text{~~}} & B \rtimes_{\varphi} G \ar[r]^{~~\text{}~\text{~}~\pi} & G \ar[r] & 1
}
$$
is in bijection with $H^1(G,B)$.
\end{theorem}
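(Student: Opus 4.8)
The plan is to construct an explicit bijection between $Sec_B(\pi)$ and the group $Z^1(G,B)$ of $1$-cocycles, and then to verify that, under this bijection, $B$-conjugacy of sections corresponds exactly to the relation of differing by a $1$-coboundary; quotienting then yields the stated bijection with $H^1(G,B)=Z^1(G,B)/B^1(G,B)$.

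\emph{Step 1: encode a section by a cochain.} Every element of $B\rtimes_{\varphi}G$ is a pair $(b,g)$ with product $(b,g)(b',g')=(b+\varphi(g)(b'),gg')$ (I write $B$ additively and abbreviate $g\cdot b:=\varphi(g)(b)$), with $i(b)=(b,1)$ and $\pi(b,g)=g$. Since $\pi\circ\mathsf{s}=\mathrm{id}_G$, any section must be of the form $\mathsf{s}(g)=(d_{\mathsf{s}}(g),g)$ for a unique map $d_{\mathsf{s}}:G\to B$, and conversely each $d\in C^1(G,B)$ gives a set-theoretic section $g\mapsto(d(g),g)$. So $\mathsf{s}\mapsto d_{\mathsf{s}}$ is a bijection from the set of set-theoretic splittings of $\pi$ onto $C^1(G,B)$; in particular the standard section $g\mapsto(0,g)$ corresponds to the zero cochain, so the sets in question are nonempty.

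\emph{Step 2: homomorphisms match cocycles.} Next I would compare $\mathsf{s}(gh)=(d_{\mathsf{s}}(gh),gh)$ with $\mathsf{s}(g)\mathsf{s}(h)=(d_{\mathsf{s}}(g)+g\cdot d_{\mathsf{s}}(h),gh)$: the section $\mathsf{s}$ is a group homomorphism if and only if $d_{\mathsf{s}}(gh)=d_{\mathsf{s}}(g)+g\cdot d_{\mathsf{s}}(h)$ for all $g,h\in G$, which is precisely the condition $\partial_1 d_{\mathsf{s}}=0$ for the convention $\partial_1 f(x,y)=x\cdot f(y)-f(xy)+f(x)$ of Section~\ref{back coho} (it also forces $d_{\mathsf{s}}(1)=0$, consistently with $\mathsf{s}(1)=(0,1)$). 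Hence the correspondence of Step 1 restricts to a bijection $Sec_B(\pi)\to Z^1(G,B)$.

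\emph{Step 3: $B$-conjugacy matches coboundaries.} Fix $b\in B$ and a section $\mathsf{s}$ with cochain $d$. Using $i(b)=(b,1)$ and $i(b)^{-1}=(-b,1)$ one computes
$$
i(b)\,\mathsf{s}(g)\,i(b)^{-1}=(b,1)(d(g),g)(-b,1)=\bigl(d(g)+b-g\cdot b,\ g\bigr),
$$
so the conjugate section $\mathsf{s}'$ has cochain $d'(g)=d(g)-(g\cdot b-b)=d(g)-\partial_0 f_b(g)$. Therefore $\mathsf{s}\thicksim_B\mathsf{s}'$ holds exactly when $d_{\mathsf{s}}-d_{\mathsf{s}'}\in B^1(G,B)$, i.e. exactly when $d_{\mathsf{s}}$ and $d_{\mathsf{s}'}$ define the same class in $H^1(G,B)$. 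Combining the three steps, $\mathsf{s}\mapsto[d_{\mathsf{s}}]$ passes to a well-defined bijection $Sec_B(\pi)/{\thicksim_B}\to H^1(G,B)$, which is the claim; specializing to $G=W$, $B=\mathbb{Z}\Phi$ with the natural action recovers the sequence~(\ref{eq seq Z}).

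\emph{Main obstacle.} There is essentially none: the argument is bookkeeping. The only point deserving care is the consistency of conventions — that the $G$-module structure on $B$ used to define $Z^1$ and $B^1$ is $g\cdot b=\varphi(g)(b)$, which is the conjugation action of the standard lift of $G$ on $i(B)$ inside $B\rtimes_{\varphi}G$, and that the sign conventions for $\partial_0,\partial_1$ are compatible with the chosen multiplication in the semidirect product; a different but equivalent convention simply transports the cocycle and coboundary relations via $b\mapsto-b$.
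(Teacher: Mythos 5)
Your proof is correct: the identification of sections with $1$-cochains via $\mathsf{s}(g)=(d_{\mathsf{s}}(g),g)$, the check that the homomorphism property is exactly the cocycle condition for the convention $\partial_1 f(x,y)=x\cdot f(y)-f(xy)+f(x)$, and the computation showing that conjugation by $i(b)$ shifts the cochain by $\partial_0 f_b$ are all accurate and use the same module structure $g\cdot b=\varphi(g)(b)$ as the paper. The paper does not prove this theorem but cites it as well known (Serre, Theorem 4.4), and your argument is precisely that standard proof.
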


\subsection{\texorpdfstring{Cohomology in degree 1 with coefficients in $A\Phi$}{Cohomology in degree 1 with coefficients in AΦ}}\label{coho in A}

This section is dedicated to the study of the cohomology in degree 1 of a Weyl group $W$ with coefficients in $A\Phi$, where $A\Phi$ is the $A$-module generated by $\Phi$ in $V$. Since $\Phi$ is crystallographic, we have $A\Phi = A\Delta$.

Let us set once for all $A$ to be a commutative unitary ring such that $\mathbb{Z} \subset A \subset \mathbb{R}$. Throughout this section we will use the following short exact sequence
\begin{equation}\label{ex seq}
\xymatrix{
1  \ar[r]  & A\Phi \ar[r]^{i~\text{~~}~\text{~~}~~} & A\Phi \rtimes W \ar[r]^{\text{~~}~~\pi} & W \ar[r] & 1.
}
\end{equation}

We state now the main results of this section:

\begin{proposition}\label{bij coh}
Let $\mathsf{s}_1$ and $\mathsf{s}_2$ be two different sections of  (\ref{ex seq}) defined by $\mathsf{s}_1(s_i) = \tau_{x_i}s_i$ and $\mathsf{s}_2(s_i) = \tau_{y_i}s_i$ with $x_i, y_i \in A\Phi$ for $i=1,\dots ,n$. Then the following are equivalent:
\begin{itemize}
\item[(i)] $\mathsf{s}_1 \thicksim \mathsf{s}_2$.
\item[(ii)] There exists $z \in A\Phi$ such that $x_i-y_i=\langle z, \alpha_i^{\vee}\rangle \alpha_i$ for any $i= 1,\dots, n$.
\end{itemize}
\end{proposition}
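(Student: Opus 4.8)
The plan is to prove the equivalence by a direct computation inside the semidirect product $A\Phi\rtimes W$, unwinding the definition of $A\Phi$-conjugacy on the Coxeter generators. First I would record the multiplication rule $(\tau_a u)(\tau_b v)=\tau_{a+u(b)}(uv)$ for $a,b\in A\Phi$ and $u,v\in W$, so that in particular $i(z)=\tau_z$ and $i(z)^{-1}=\tau_{-z}$. Next I would observe that it suffices to test the conjugacy relation on $s_1,\dots,s_n$: for a fixed $z\in A\Phi$ both maps $g\mapsto\mathsf{s}_2(g)$ and $g\mapsto i(z)\mathsf{s}_1(g)i(z)^{-1}$ are group homomorphisms $W\to A\Phi\rtimes W$, and the set on which two homomorphisms agree is a subgroup; since $W=\langle s_1,\dots,s_n\rangle$, agreement on the $s_i$ propagates to all of $W$. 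Hence (i) is equivalent to the existence of $z\in A\Phi$ with $\mathsf{s}_2(s_i)=i(z)\mathsf{s}_1(s_i)i(z)^{-1}$ for every $i$.

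The heart of the proof is then a one-line computation on a generator. Applying the product rule twice,
\[
i(z)\,\mathsf{s}_1(s_i)\,i(z)^{-1}=\tau_z\,\tau_{x_i}\,s_i\,\tau_{-z}=\tau_{\,x_i+z-s_i(z)}\,s_i,
\]
and since $s_i(z)=z-\langle z,\alpha_i^{\vee}\rangle\alpha_i$ we get $z-s_i(z)=\langle z,\alpha_i^{\vee}\rangle\alpha_i$, so the right-hand side equals $\tau_{\,x_i+\langle z,\alpha_i^{\vee}\rangle\alpha_i}\,s_i$. Comparing with $\mathsf{s}_2(s_i)=\tau_{y_i}s_i$ and using injectivity of $\tau$, the generator-wise condition becomes $y_i-x_i=\langle z,\alpha_i^{\vee}\rangle\alpha_i$. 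To see that this is exactly the scalar identity in (ii), I would use that $\mathsf{s}_1$ and $\mathsf{s}_2$ are sections, so $\mathsf{s}_j(s_i)^2=1$ forces $s_i(x_i)=-x_i$ and $s_i(y_i)=-y_i$; hence $x_i,y_i$ lie on the $(-1)$-eigenline $\mathbb{R}\alpha_i$ of $s_i$, and writing $x_i=a_i\alpha_i$, $y_i=b_i\alpha_i$ with $a_i,b_i\in A$ (here one uses $A\Phi=A\Delta=\bigoplus_i A\alpha_i$) the condition reads $b_i-a_i=\langle z,\alpha_i^{\vee}\rangle$.

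Assembling the two implications is then immediate. If (ii) holds with some $z\in A\Phi$, then $\mathsf{s}':=i(z)\mathsf{s}_1 i(z)^{-1}$ is a section of (\ref{ex seq}) agreeing with $\mathsf{s}_2$ on each $s_i$, hence $\mathsf{s}'=\mathsf{s}_2$ and $\mathsf{s}_1\thicksim\mathsf{s}_2$; conversely, a conjugacy $\mathsf{s}_1\thicksim\mathsf{s}_2$ is witnessed by some $i(z)$ with $z\in A\Phi$, and evaluating at the $s_i$ the computation above reads off (ii). I do not expect a genuine obstacle here — the proof is essentially bookkeeping — and the only two points deserving an explicit sentence are the reduction to the Coxeter generators and the remark (via involutivity of $\mathsf{s}_j(s_i)$) that the translation parts $x_i,y_i$ are scalar multiples of $\alpha_i$, which is what makes the equation in (ii) meaningful. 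If a more conceptual formulation is preferred, the same statement also follows from Theorem~\ref{bij H1}: the section $\mathsf{s}_j$ corresponds to the $1$-cocycle $s_i\mapsto x_i$, conjugation by $i(z)$ adds the coboundary $g\mapsto (1-g)z$, and on generators this coboundary is $s_i\mapsto\langle z,\alpha_i^{\vee}\rangle\alpha_i$, so the two cocycles are cohomologous precisely under condition (ii).
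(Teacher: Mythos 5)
Your proof is correct and follows essentially the same route as the paper's: unwind the $A\Phi$-conjugacy condition on the Coxeter generators via the semidirect-product multiplication rule and the identity $(\mathrm{id}-s_i)(z)=\langle z,\alpha_i^{\vee}\rangle\alpha_i$, using $s_i(x_i)=-x_i$ to read the result as a scalar equation. The only difference is cosmetic: where you reduce to generators by noting that two homomorphisms agreeing on a generating set agree everywhere, the paper performs the same reduction by an explicit telescoping computation along a reduced expression $w=t_1\cdots t_p$.
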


\begin{theorem}\label{Cartan A}
The following points are equivalent:
\begin{itemize}
\item[(i)] $H^1(W,A\Phi) = 0$.
\item[(ii)] $C_{\Phi} \in \mathrm{GL}_n(A)$.
\end{itemize}
\end{theorem}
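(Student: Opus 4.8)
The plan is to reduce the statement to Proposition \ref{bij coh} by parametrizing sections of (\ref{ex seq}) and then observing that $B^1$-cosets of cocycles correspond to $A\Phi$-conjugacy classes of sections via Theorem \ref{bij H1}. So $H^1(W,A\Phi)=0$ is equivalent to saying that all sections are $A\Phi$-conjugate, which by Proposition \ref{bij coh} is equivalent to: for every tuple $(y_i-x_i)_{i=1}^n \in A^n$ that is realizable as a difference of two sections, there exists $z \in A\Phi$ with $\langle z,\alpha_i^\vee\rangle = y_i-x_i$ for all $i$.

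First I would check that the constraints defining a section are "soft" enough that the difference tuple $(d_i)_{i=1}^n := (y_i-x_i)_{i=1}^n$ can be an arbitrary element of $A^n$. Concretely: given any section $\mathsf{s}_1$ (e.g.\ the split one $s_i \mapsto s_i$) and any $(d_i) \in A^n$, one wants to produce a genuine section $\mathsf{s}_2$ with $\mathsf{s}_2(s_i) = \tau_{d_i \varpi_i'} s_i$ or similar — the point being that a homomorphism $W \to A\Phi \rtimes W$ lifting the identity is determined by the images of the generators subject only to the braid/order relations, and (at least after working in $A\Phi$, or in $A$ tensored suitably) these relations impose no extra condition on the translation parts beyond $W$-equivariance, which is automatically satisfied. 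I'd make this precise by the standard fact that sections correspond bijectively to $1$-cocycles $f \in Z^1(W, A\Phi)$ via $\mathsf{s}(w) = \tau_{f(w)} w$, and $Z^1$ is a submodule of $C^1$ cut out by linear conditions; evaluating the split section and adding an arbitrary coboundary-free direction shows the difference map $Z^1 \to A^n$, $f \mapsto (f(s_i))_i$, is surjective (indeed it is injective too, since a cocycle is determined by its values on generators). Hence the realizable difference tuples are exactly all of $A^n$.

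Then the equivalence becomes purely linear-algebraic: $H^1(W,A\Phi)=0$ iff for every $d \in A^n$ there is $z \in A\Phi$ with $\langle z, \alpha_i^\vee\rangle = d_i$ for all $i$. Writing $z = \sum_j c_j \alpha_j$ with $c_j \in A$, the left-hand side is $\sum_j c_j \langle \alpha_j,\alpha_i^\vee\rangle = (C_\Phi^{\mathsf T} c)_i$. So the condition is exactly that $C_\Phi^{\mathsf T} : A^n \to A^n$ is surjective. Since $C_\Phi$ is already invertible over $\mathbb{R}$ (noted in the text), surjectivity of $C_\Phi^{\mathsf T}$ over $A$ is equivalent to $C_\Phi^{\mathsf T} \in GL_n(A)$, equivalently $C_\Phi \in GL_n(A)$ (a matrix and its transpose have the same invertibility over a commutative ring, as $\det$ is transpose-invariant and a square matrix over a commutative ring is invertible iff its determinant is a unit). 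This gives (i) $\Leftrightarrow$ (ii).

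The main obstacle I anticipate is the first step: carefully justifying that every difference tuple in $A^n$ is realized by an actual pair of sections, i.e.\ that the braid relations of $W$ put no obstruction on the translation parts. The clean way around it is to not argue section-by-section at all but to use the cocycle description: $H^1(W,A\Phi) = Z^1/B^1$, the restriction map $Z^1(W,A\Phi) \to A\Phi^{\,n}$ given by $f\mapsto(f(s_i))_i$ is injective (a $1$-cocycle is determined by its values on a generating set), its image is some submodule $Z \subseteq (A\Phi)^n$, and $B^1$ maps onto $\{(\langle z,\alpha_i^\vee\rangle \alpha_i? \dots)\}$ — here one must be slightly careful because $f(s_i) = \partial_0 f_z (s_i) = s_i z - z$, and for the reflection $s_i$ one has $s_i z - z = -\langle z,\alpha_i^\vee\rangle \alpha_i$, so the coboundary's $i$-th component lies in $A\alpha_i$ with coefficient $-\langle z,\alpha_i^\vee\rangle$. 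Combined with Proposition \ref{bij coh}'s already-proven equivalence, the quotient $Z/B^1$ vanishes precisely when the map $z \mapsto (\langle z,\alpha_i^\vee\rangle)_i$ hits every tuple occurring as a component-coefficient difference — and Proposition \ref{bij coh} tells us those difference tuples are unconstrained, so we are back to surjectivity of $C_\Phi^{\mathsf T}$ on $A^n$. I would lean on Proposition \ref{bij coh} and Theorem \ref{bij H1} to keep this step short rather than re-deriving the cocycle bookkeeping from scratch.
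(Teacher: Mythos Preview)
Your plan is correct and coincides with the paper's proof: both reduce $H^1(W,A\Phi)=0$ to the solvability of ${}^tC_\Phi z = d$ for every $d\in A^n$ via Theorem~\ref{bij H1} and Proposition~\ref{bij coh}, and then observe this is precisely $C_\Phi\in GL_n(A)$. The step you flag as the main obstacle---that every tuple $(d_i)\in A^n$ is realised as a difference of two sections, i.e.\ that the braid relations impose no constraint on the translation parts---is exactly the content of Lemma~\ref{braid lemma}, so you can simply cite it rather than rederive it via cocycles; the paper's argument for (i)$\Rightarrow$(ii) then proceeds by contrapositive, picking $d$ to be a standard basis vector and reading off that the required $z$ would have a coordinate equal to an entry of ${}^tC_\Phi^{-1}$ not lying in $A$.
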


\begin{theorem}\label{A minimal}
 Let $f_{\Phi}$ be the index of connection of $\Phi$ and $\mathbb{Z}_{f_{\Phi}} := \mathbb{Z}[\frac{1}{f_{\Phi}}]$. Then the following points are equivalent 
 \begin{itemize}
 \item[(i)] $H^1(W, A\Phi) = 0$ with $A$ minimal.
 \item[(ii)]$A=\mathbb{Z}_{f_{\Phi}}$.
 \end{itemize}
\end{theorem}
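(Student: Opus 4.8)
The plan is to deduce this from Theorem \ref{Cartan A} by a purely arithmetic analysis of when $C_\Phi \in GL_n(A)$. First I would observe that, by Theorem \ref{Cartan A}, the condition $H^1(W,A\Phi)=0$ is equivalent to $C_\Phi$ being invertible over $A$, i.e. to $C_\Phi^{-1}$ having all its entries in $A$. Since $C_\Phi$ is an integer matrix with $\det C_\Phi = f_\Phi$ (the index of connection, which I would recall equals $[\,P(\Phi^\vee):\mathbb{Z}\Phi^\vee\,]$ and is the determinant of the Cartan matrix — this is standard, e.g.\ Bourbaki), Cramer's rule gives $C_\Phi^{-1} = \frac{1}{f_\Phi}\,\mathrm{adj}(C_\Phi)$ where $\mathrm{adj}(C_\Phi) \in M_n(\mathbb{Z})$. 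Hence every entry of $C_\Phi^{-1}$ lies in $\frac{1}{f_\Phi}\mathbb{Z}$, so $C_\Phi \in GL_n(\mathbb{Z}_{f_\Phi})$ and the implication (ii) $\Rightarrow$ (i) follows immediately from Theorem \ref{Cartan A}.

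For minimality I need the converse direction: if $A$ is a ring with $\mathbb{Z}\subset A\subset\mathbb{R}$ and $C_\Phi\in GL_n(A)$, then $A\supseteq\mathbb{Z}_{f_\Phi}=\mathbb{Z}[1/f_\Phi]$. The key step is to show that $\frac{1}{f_\Phi}$ must lie in $A$. I would argue as follows: the invariant factors of $C_\Phi$ over $\mathbb{Z}$ are $d_1\mid d_2\mid\cdots\mid d_n$ with $d_1\cdots d_n = \det C_\Phi = f_\Phi$, and in each irreducible type the largest invariant factor $d_n$ actually equals $f_\Phi$ (in types $A_n$, $B_n$, $C_n$, $E_6$, $E_7$ the Cartan matrix has a single nontrivial invariant factor equal to $f_\Phi$; in type $D_n$ with $n$ even $f_\Phi=4$ with invariant factors $2,2$, which still forces $1/2\in A$ hence $1/4\in A$ — so I should phrase the argument in terms of: $A$ contains $1/d_i$ for every invariant factor $d_i$, and the subring of $\mathbb{R}$ generated by $\mathbb{Z}$ and all the $1/d_i$ is exactly $\mathbb{Z}[1/\mathrm{lcm}(d_i)] = \mathbb{Z}[1/d_n]$, which equals $\mathbb{Z}[1/f_\Phi]$ since $d_n$ and $f_\Phi$ have the same prime divisors). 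Concretely: writing $C_\Phi = U D V$ with $U,V\in GL_n(\mathbb{Z})$ and $D=\mathrm{diag}(d_1,\dots,d_n)$ the Smith normal form, invertibility of $C_\Phi$ over $A$ is equivalent to invertibility of $D$ over $A$, i.e.\ to $d_i\in A^\times$ for all $i$, i.e.\ to $1/d_i\in A$ for all $i$. The smallest such $A$ inside $\mathbb{R}$ is $\mathbb{Z}[1/d_1,\dots,1/d_n]=\mathbb{Z}[1/f_\Phi]=\mathbb{Z}_{f_\Phi}$, using $d_1\cdots d_n=f_\Phi$ and $d_i\mid d_n\mid f_\Phi$.

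The main obstacle I anticipate is not the Smith-normal-form reduction (which is routine) but rather nailing the precise arithmetic claim that $\mathbb{Z}[1/d_1,\dots,1/d_n]=\mathbb{Z}[1/f_\Phi]$ uniformly across all irreducible types, i.e.\ that the set of primes dividing $f_\Phi=\prod d_i$ coincides with the set of primes dividing $\mathrm{lcm}(d_1,\dots,d_n)=d_n$; this requires knowing that no "new" prime is hidden in a smaller invariant factor — equivalently, that $\mathrm{rad}(\det C_\Phi)=\mathrm{rad}(d_n)$ — which I would verify by recalling the explicit elementary-divisor structure of $C_\Phi$ type by type (the quotient $P(\Phi^\vee)/\mathbb{Z}\Phi^\vee$ is cyclic except in type $D_{2k}$ where it is $\mathbb{Z}/2\times\mathbb{Z}/2$, and in that one exceptional case $\mathrm{lcm}=2$, $f_\Phi=4$, $\mathrm{rad}(2)=\mathrm{rad}(4)=\{2\}$, so the identity still holds). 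With that in hand, combining the two directions with Theorem \ref{Cartan A} yields the equivalence, and "minimal" is interpreted as: $\mathbb{Z}_{f_\Phi}$ is contained in every $A$ for which $H^1(W,A\Phi)=0$, and itself satisfies this, so it is the minimum of that family of rings under inclusion. $\hfill\qed$
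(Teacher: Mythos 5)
Your proposal is correct, and the forward direction (showing $\mathbb{Z}_{f_\Phi}$ suffices via $C_\Phi^{-1}=\tfrac{1}{f_\Phi}\mathrm{adj}(C_\Phi)$ and Theorem \ref{Cartan A}) is exactly what the paper does. For the converse --- showing any $A$ with $H^1(W,A\Phi)=0$ must contain $\mathbb{Z}[\tfrac{1}{f_\Phi}]$ --- your route is genuinely different. The paper argues concretely from the tables of inverse Cartan matrices: in types $A$, $B$, $C$, $E_6$, $E_7$, $E_8$, $F_4$, $G_2$ it locates two entries of the form $\tfrac{d}{f_\Phi}$ and $\tfrac{d+1}{f_\Phi}$, whose difference forces $\tfrac{1}{f_\Phi}\in A$, and then patches type $D_n$ separately by extracting $\tfrac{2}{f_\Phi}=\tfrac12\in A$ and squaring to get $\tfrac14$. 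You instead pass to the Smith normal form $C_\Phi=UDV$, reduce invertibility over $A$ to $1/d_i\in A$ for each invariant factor, and invoke the ring identity $\mathbb{Z}[1/d_1,\dots,1/d_n]=\mathbb{Z}[1/(d_1\cdots d_n)]=\mathbb{Z}[1/f_\Phi]$. Your approach is more uniform and arguably cleaner: the step you flag as the ``main obstacle'' (that $\mathrm{rad}(\prod d_i)=\mathrm{rad}(\mathrm{lcm}(d_i))$) is in fact automatic for any family of positive integers, since $1/(d_1\cdots d_n)=\prod(1/d_i)$ and $1/d_i=(\prod_{j\neq i}d_j)/(d_1\cdots d_n)$, so no type-by-type verification of the elementary divisors is actually needed --- only the standard fact $\det C_\Phi=f_\Phi$. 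What the paper's approach buys in exchange is that it stays entirely at the level of the explicit inverse Cartan matrices already being used in Section \ref{concrete}, at the cost of a case analysis and an ad hoc argument in type $D$.
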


\begin{example}\label{big example}
Let us first illustrate this with an example. Let us take $W=W(B_2)$. The Cartan matrix of $B_2$ and its inverse are
$$
C_{B_2}=
\begin{pmatrix} 
    2 & -2    \\
    -1  &  2   \\
\end{pmatrix}
~~\text{and}~~
{}^{t}C_{B_2}^{-1}=\frac{1}{2}
\begin{pmatrix} 
    2 & 1  \\
    2  &  2   \\
\end{pmatrix}.
$$
Let $\mathsf{s}_1$ and $\mathsf{s}_2$ be two sections of $W(B_2)$. We know by Lemma \ref{braid lemma} that we can identify $\mathsf{s}_1$ with a point $(x_1, x_2) \in \mathbb{Z}\alpha_1 \times \mathbb{Z}\alpha_2$ and $\mathsf{s}_2$ with a point $(y_1, y_2) \in \mathbb{Z}\alpha_1 \times \mathbb{Z}\alpha_2$. Denote ${x_i = a_i\alpha_i}$ and $y_i = b_i\alpha_i$. Moreover we know that 
\begin{align*}
    \mathsf{s}_1 \thicksim \mathsf{s}_2 & \Longleftrightarrow \exists z \in \mathbb{Z}\alpha_1 \times \mathbb{Z}\alpha_2 ~\text{such that}~ \left\{
\begin{array}{ll}
x_1-y_1= \langle z,\alpha_1^{\vee}\rangle\alpha_1\\
x_2-y_2= \langle z,\alpha_2^{\vee}\rangle\alpha_2
\end{array}
\right. \\
& \Longleftrightarrow \exists z \in \mathbb{Z}\alpha_1 \times \mathbb{Z}\alpha_2 ~\text{such that}~ \left\{
\begin{array}{ll}
a_1\alpha_1-b_1\alpha_1= \langle z,\alpha_1^{\vee}\rangle\alpha_1\\
a_2\alpha_2-b_2\alpha_2= \langle z,\alpha_2^{\vee}\rangle\alpha_2
\end{array}
\right. \\
& \Longleftrightarrow \exists z \in \mathbb{Z}\alpha_1 \times \mathbb{Z}\alpha_2 ~\text{such that}~ \left\{
\begin{array}{ll}
a_1-b_1= \langle z,\alpha_1^{\vee}\rangle\\
a_2-b_2= \langle z,\alpha_2^{\vee}\rangle.
\end{array}
\right. 
\end{align*}

It follows then that 
\begin{align*}
\mathsf{s}_1 \thicksim \mathsf{s}_2 & \Longleftrightarrow \exists z \in \mathbb{Z}\alpha_1 \times \mathbb{Z}\alpha_2 ~\text{such that}~ 
{}^{t}C_{B_2}
\begin{pmatrix} 
      z_1  \\
    z_2     \\
\end{pmatrix}
=\begin{pmatrix} 
 a_1-b_1 \\
  a_2-b_2\\
\end{pmatrix} \\
&  \Longleftrightarrow \exists z \in \mathbb{Z}\alpha_1 \times \mathbb{Z}\alpha_2 ~\text{such that}~ 
\begin{pmatrix} 
      z_1  \\
    z_2     \\
\end{pmatrix}
=\frac{1}{2}
\begin{pmatrix} 
    2 & 1  \\
    2  &  2   \\
\end{pmatrix}\begin{pmatrix} 
 a_1-b_1 \\
  a_2-b_2\\
\end{pmatrix}.
\end{align*}

Thus, we must have $z_1$ and $z_2$ both integral, and such that  
$$
\left\{
 \begin{array}{ll}
2z_1 =  2(a_1-b_1)+2(a_2-b_2)  \\
2z_2 =   2(a_1 - b_1) +a_2-b_2.
\end{array}
\right. 
$$

These arithmetic conditions show how $H^1(W(B_2), \mathbb{Z}\Phi)$ behaves, and we see in this particular situation that we do not have $H^1(W(B_2), \mathbb{Z}\Phi) = 0$. Indeed, by taking for example $a_1=b_1=1$, $a_2=3$ and $b_2=2$ we have $2z_2=1$, which is impossible since $z_2$ is integral. We thus have an obstruction between $\mathsf{s}_1$ and $\mathsf{s}_2$, which implies that $H^1(W(B_2),\mathbb{Z}\Phi)\neq 0$.

However, if we consider the equations in the ring $\mathbb{Z}_2=\mathbb{Z}[\frac{1}{2}]$ instead of $\mathbb{Z}$, the previous obstruction no longer exists and it follows that $H^1(W(B_2), \mathbb{Z}_2\Phi) = 0$. Furthermore, the ring $\mathbb{Z}_2$ is the smallest one that satisfies the last equality.

\end{example}

\begin{lemma}\label{braid lemma}
Let $\mathsf{s}$ be a section of the exact sequence (\ref{ex seq}) defined by $\mathsf{s}(s_i)=\tau_{x_i}s_i$ for $s_i \in S$ and some $x_i \in A\Phi$. Then the  map $\xi $ from  $Sec_{A\Phi}(\pi)$ to $A\alpha_1 \times \cdots \times A\alpha_n$ defined by ${\xi(\mathsf{s}) = (x_1,\dots ,x_n)}$ is a bijection.
\end{lemma}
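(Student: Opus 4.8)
The plan is to show that $\xi$ is well-defined, injective, and surjective, exploiting the semidirect product structure of $A\Phi \rtimes W$ and the fact that the $s_i$ generate $W$. First I would observe that a section $\mathsf{s}$ of $\pi$ is determined by its values on the generators $s_1,\dots,s_n$, since $\mathsf{s}$ is a group homomorphism. Because $\pi(\mathsf{s}(s_i)) = s_i$ and every element of $A\Phi \rtimes W$ lying over $s_i \in W$ is uniquely of the form $\tau_{x_i}s_i$ with $x_i \in A\Phi$, each section $\mathsf{s}$ produces a well-defined tuple $(x_1,\dots,x_n) \in A\alpha_1 \times \cdots \times A\alpha_n$; here one uses that $\tau_{x_i}s_i \in A\Phi \rtimes W$ is determined by $x_i$, and conversely $x_i$ is recovered from $\mathsf{s}(s_i)$ by projecting to the translation part. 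Wait — I should be careful: the target is written as $A\alpha_1 \times \cdots \times A\alpha_n$ rather than $(A\Phi)^n$, so the content of well-definedness is precisely that $x_i \in A\alpha_i$, i.e. the translation part of $\mathsf{s}(s_i)$ lies in the line spanned by $\alpha_i$. This should follow from the relation $s_i^2 = e$: applying $\mathsf{s}$ gives $(\tau_{x_i}s_i)^2 = \tau_{x_i + s_i(x_i)} = \tau_0$, so $x_i + s_i(x_i) = 0$, and since $s_i$ fixes the hyperplane $H_{\alpha_i}$ pointwise and negates $\alpha_i$, the $s_i$-anti-invariants in $V$ are exactly $\mathbb{R}\alpha_i$; combined with $x_i \in A\Phi$ this forces $x_i \in A\alpha_i$ (using that $A\Phi \cap \mathbb{R}\alpha_i = A\alpha_i$, which holds because $\alpha_i$ is part of a basis).

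Next, injectivity of $\xi$ is immediate: if two sections agree on all the $s_i$, they agree on all of $W$ since the $s_i$ generate $W$ and sections are homomorphisms. So $\xi(\mathsf{s}) = \xi(\mathsf{s}')$ implies $\mathsf{s} = \mathsf{s}'$.

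For surjectivity, given an arbitrary tuple $(x_1,\dots,x_n) \in A\alpha_1 \times \cdots \times A\alpha_n$, I need to produce a section $\mathsf{s}$ with $\mathsf{s}(s_i) = \tau_{x_i}s_i$. The natural approach is to define $\mathsf{s}$ on generators and verify it respects the Coxeter presentation of $W$, namely $s_i^2 = e$ and the braid relations $(s_is_j)^{m_{ij}} = e$. The relation $s_i^2 = e$ is satisfied because $x_i + s_i(x_i) = x_i - x_i = 0$ as $s_i(x_i) = -x_i$ for $x_i \in A\alpha_i$. The braid relations are the substantive point; this is essentially the content of the cited result and of Lemma~\ref{braid lemma}'s phrasing, but here the key algebraic fact is that for the alternating word $\underbrace{s_is_js_i\cdots}_{m_{ij}}$, the accumulated translation part, computed via the twisted action, automatically vanishes. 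One way to see this cleanly: the standard minimal section $\mathsf{s}_0(s_i) = s_i$ (i.e. $x_i = 0$) is a section, and a calculation using only the rank-two root subsystem $\langle \alpha_i,\alpha_j\rangle$ shows that replacing $s_i \mapsto s_i$ by $s_i \mapsto \tau_{x_i}s_i$ with $x_i \in A\alpha_i$ still satisfies the braid relation between $i$ and $j$ — because on the relevant $2$-dimensional subspace the computation reduces to the affine Weyl group of a rank-$2$ system and the translation parts telescope to zero. I expect this braid-relation verification to be the main obstacle, as it requires the rank-$2$ case analysis ($A_1 \times A_1$, $A_2$, $B_2$, $G_2$), though it is standard; it may be cleanest to invoke that $W_a \simeq \mathbb{Z}\Phi \rtimes W$ together with Theorem~\ref{bij H1} to deduce that $Sec_{A\Phi}(\pi)$ is a torsor-like set whose cardinality matches, but the direct verification on generators is the honest route. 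Once surjectivity is established, $\xi$ is a bijection, completing the proof.
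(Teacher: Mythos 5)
Your proposal is correct and follows essentially the same route as the paper: well-definedness of $\xi$ is extracted from the relation $s_i^2=e$, which forces $s_i(x_i)=-x_i$ and hence $x_i\in A\alpha_i$, and surjectivity comes down to checking that the braid relations are preserved for any choice of $x_i\in A\alpha_i$ (the paper phrases this as $\mathrm{ord}(s_{\alpha_i,k_i}s_{\alpha_j,k_j})=\mathrm{ord}(s_{\alpha_i}s_{\alpha_j})$, which is your rank-two telescoping observation). Your added remarks on injectivity and on $A\Phi\cap\mathbb{R}\alpha_i=A\alpha_i$ only make explicit what the paper leaves implicit.
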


\begin{proof}

The group $W$ is a Coxeter group given by generators and relations, so $W = \langle S~|~R \rangle$ where $S = \{s_1,\dots, s_n\}$ and $R = \{(s_is_j)^{m_{ij}}~|~i,j = 1\dots,n\}$. For example, in type $A_n$, the relations are given by $R = \{s_i^2~|~i = 1,\dots,n\} \sqcup \{(s_is_j)^2~|~|i-j| > 1\} \sqcup \{(s_is_{i+1})^3~|~1 \leq i \leq n-1\}$. 
Therefore, $W = F(S)/\langle \langle R \rangle \rangle$ where $F(S)$ is the free group over $S$ and  $\langle \langle R \rangle \rangle$ is the normal closure of $R$. 

\bigskip

$\bullet$ Let us first show that $\xi$ is well defined, that is $x_i \in A\alpha_i$ for all $i$. Since $\mathsf{s}$ is a morphism, it must preserve all the relations of $W$. Therefore, the relation $s_i^2=e$ implies that $\tau_{x_i}s_i\tau_{x_i}s_i = e$, and by (\ref{conjugacy formula with translation second version}) we have  $\tau_{x_i}\tau_{s_i(x_i)}s_is_i = e$, that is
$\tau_{x_i + s_i(x_i)}s_i^2 = e$. Hence $x_i + s_i(x_i) = 0$, and then we must have $s_i(x_i) = -x_i$, which means that ${x_i \in H_{\alpha_i}^{\perp} = \mathbb{R}\alpha_i}$. Thus, for all $i=1,\dots ,n$ there exists $k_i \in \mathbb{R}$ such that $x_i=k_i\alpha_i$. However, because $x_i \in A\Phi$ and $A\Phi = A\Delta$, we in fact have $k_i \in A$. 

\bigskip

$\bullet$  Conversely, we claim that any choice of $\mathsf{k} =(k_1,\dots,k_n)\in A^n$ gives rise to a section $\mathsf{s}_{\mathsf{k}}$. From this claim it is clear that the map $\xi$ is a bijection. 

 \noindent\textit{Proof of the claim}. Let us define the map $f_{\mathsf{k}} : S \rightarrow A\Phi \rtimes W$  by $f_{\mathsf{k}}(s_i) = \tau_{k_i\alpha_i}s_i$.
  From the universal property of free groups, the map $f_{\mathsf{k}}$ extends uniquely into a morphism   $\widetilde{f_{\mathsf{k}}} : F(S) \rightarrow A\Phi \rtimes W$ such that the following diagram commutes
\[
\xymatrix@R=0.8pc@C=2.2pc{
  & F(S) \ar[dd]^{\widetilde{f}_{\mathsf{k}}} \\
  S ~ \ar@{^{(}->}[ru] \ar[rd]_{\,\hspace{-2em}f_{\mathsf{k}}} & \\
  & A\Phi \rtimes W.
}
\]
Let us show now that $\langle \langle R \rangle \rangle \subset \text{ker}(\widetilde{f_{\mathsf{k}}})$. It suffices to show that $R \subset \text{ker}(\widetilde{f_{\mathsf{k}}})$. Recall the notation $s_i = s_{\alpha_i}$ and also that for $s_i, s_j \in S$, the integer $m_{ij}$ is the order of $s_is_j$ in $W$. In order to show the inclusion $R \subset \text{ker}(\widetilde{f_{\mathsf{k}}})$ we need to show that $\widetilde{f_{\mathsf{k}}}\big((s_is_j)^{m_{ij}}\big) = e$, where $(s_is_j)^{m_{ij}}$ is seen here as element of $F(S)$, meaning in particular that it is a priori not equal to the identity element. For any $\alpha_i, \alpha_j \in \Delta$, we have $\text{ord}(s_{\alpha_i,k_i}s_{\alpha_j,k_j}) = \text{ord}(s_{\alpha_i}s_{\alpha_j}) = m_{ij}$ \cite[Proposition 1.3.4]{NathanChapelierPhdThesis}. However, by (\ref{affine reflection decomp}) we have $s_{\alpha_i,k_i}=\tau_{k_i\alpha_i}s_{\alpha_i}=\widetilde{f_{\mathsf{k}}}(s_i)$. Therefore, 
$$
\widetilde{f_{\mathsf{k}}}\big((s_is_j)^{m_{ij}}\big) = \Big( \widetilde{f_{\mathsf{k}}}(s_is_j)\Big)^{m_{ij}} =\Big( \widetilde{f_{\mathsf{k}}}(s_i)\widetilde{f_{\mathsf{k}}}(s_j)\Big)^{m_{ij}}  = \Big( s_{\alpha_i,k_i}s_{\alpha_j,k_j}\Big)^{m_{ij}}=e.
$$
Thus, the map $\widetilde{f_{\mathsf{k}}}$ factors through the quotient $W = F(S)/\langle \langle R \rangle \rangle$ into a morphism denoted $\mathsf{s}_{\mathsf{k}}$, and we can complete the above diagram as the following commutative diagram 

\[
\xymatrix@R=0.8pc@C=2.2pc{
  & F(S) \ar[dd]^{\widetilde{f}_{\mathsf{k}}} \ar@{->>}[rd]\\
  S ~ \ar@{^{(}->}[ru] \ar[rd]_{\,\hspace{-2em}f_{\mathsf{k}}} & & W. \ar[ld]^{\,\hspace{1em}\mathsf{s}_{\mathsf{k}}}\\
  & A\Phi \rtimes W
}
\]

 Finally, it is obvious that $\pi \circ \mathsf{s}_{\mathsf{k}} = id_{W}$.

\end{proof}

\newpage

\begin{proof}[Proof of Proposition \ref{bij coh}]
 Since $\mathsf{s}_1$ and $\mathsf{s}_2$ are different there exists at least one index $j$ such that ${x_j \neq y_j}$. By definition, $\mathsf{s}_1$ is $A\Phi$-conjugate to $\mathsf{s}_2$ if and only if there exists $z \in A\Phi$ such that for all $w \in W$ one has ${\mathsf{s}_2(w)=\tau_z\mathsf{s}_1(w)\tau_{-z}}$. It turns out that ${\tau_z\mathsf{s}_1(w)\tau_{-z} = \tau_{(id-w)(z)}\mathsf{s}_1(w)}$, indeed since $\mathsf{s}_1(w) \in A\Phi \rtimes W$ there exists $u \in A\Phi$ such that $\mathsf{s}_1(w)=\tau_uw$. Therefore we have  
 \begin{align*}
  \tau_z\mathsf{s}_1(w)\tau_{-z} = \tau_z\tau_uw\tau_{-z} &=\tau_z\tau_u\tau_{-w(z)}w = \tau_z\tau_{-w(z)}\tau_uw 
 = \tau_{(id-w)(z)}\mathsf{s}_1(w).
 \end{align*}

 Hence, it follows
\begin{align*}
\mathsf{s}_1 \thicksim \mathsf{s}_2 ~~~ \Longleftrightarrow & ~~~\exists z \in A\Phi~\text{such that}~\forall~ w \in W, ~\mathsf{s}_2(w)= \tau_{(id-w)(z)}\mathsf{s}_1(w) \\
														~~~ \Longrightarrow & ~~~\exists z \in A\Phi~\text{such that}~\forall~ s_i \in S, ~\mathsf{s}_2(s_i)= \tau_{(id-s_i)(z)}\mathsf{s}_1(s_i) \\
													    ~~~  \Longleftrightarrow & ~~~\exists z \in A\Phi~\text{such that}~\forall~s_i \in S, ~y_i-x_i=(id-s_i)(z). \\ 
\end{align*}

We also have the opposite direction. Indeed, let us write $w=t_1t_2\dots t_p$ with $t_i \in S$ such that it is a reduced expression. Via the following equalities we have
\begin{align*}
\mathsf{s}_1(w)  &= \mathsf{s}_1(t_1)\mathsf{s}_1(t_2)\dots \mathsf{s}_1(t_p) \\
&= \tau_{x_1}t_1\tau_{x_2}t_2\dots \tau_{x_p}t_p \\
&=\tau_{x_1+t_1(x_2)+\cdots+t_1t_2\dots t_{p-1}(x_p)}t_1t_2\dots t_p,
\end{align*}
and
\begin{align*}
\mathsf{s}_2(w)  &= \mathsf{s}_2(t_1)\mathsf{s}_2(t_2)\dots \mathsf{s}_2(t_p) \\
&= \tau_{y_1}t_1\tau_{y_2}t_2\dots \tau_{y_p}t_p \\
&=\tau_{y_1+t_1(y_2)+\cdots+t_1t_2\dots t_{p-1}(y_p)}t_1t_2\dots t_p.
\end{align*}

It follows that
\begin{align*}
																															 &  \mathsf{s}_2(w) =  \tau_{(id-w)(z)}\mathsf{s}_1(w) \\
																					  \Longleftrightarrow   ~~   & \tau_{y_1+t_1(y_2)+\cdots+t_1t_2\dots t_{p-1}(y_p)}t_1t_2\dots t_p =  \tau_{(id-w)(z)}\tau_{x_1+t_1(x_2)+\cdots+t_1t_2\dots t_{p-1}(x_p)}t_1t_2\dots t_p  \\
 																						 \Longleftrightarrow  ~~ & y_1+t_1(y_2)+\cdots+t_1t_2\dots t_{p-1}(y_p) = z-w(z) +  x_1+t_1(x_2)+\cdots+t_1t_2\dots t_{p-1}(x_p) \\
 																					 \Longleftrightarrow    ~~  & z-w(z) = (y_1 - x_1) + t_1(y_2-x_2) + \cdots + t_1t_2\dots t_{p-1}(y_p - x_p).
\end{align*}

However, with the assumption $y_i-x_i = (id-t_i)(z)$ for all $i=1,\dots ,p$,   it follows that
\begin{align*}
z-w(z)  &= z-t_1(z)+t_1(z)-t_1t_2(z)
 +t_1t_2(z)-\cdots -t_1t_2\dots t_{p-1}(z) + t_1t_2\dots t_{p-1}(z)-w(z) \\
                           & = z-t_1(z) + t_1(z-t_2(z)) +\cdots+ t_1t_2\dots t_{p-1}(z-t_p(z)) \\
                           & = y_1-x_1 + t_1(y_2-x_2) + \cdots + t_1t_2\dots t_{p-1}(y_p-x_p).
\end{align*}
Therefore we have the equivalence  
$$
\mathsf{s}_1 \thicksim \mathsf{s}_2 ~ \Longleftrightarrow  ~\exists z \in A\Phi~\text{such that}~\forall~ s_i \in S, ~y_i-x_i=(id-s_i)(z).
$$
We conclude the proof with the following computation
\begin{align*}
    (id-s_i)(z)& = z-s_i(z) \\
               & = z-(z-\langle z, \alpha_i^{\vee}\rangle\alpha_i) \\
               & =\langle z, \alpha_i^{\vee}\rangle \alpha_i.
\end{align*}

We conclude by applying the change of variables $z \mapsto -z$.
\end{proof}

\begin{proof}[Proof of Theorem \ref{Cartan A}]
We first show that (ii) implies (i). By Proposition \ref{bij coh} we know that two sections $\mathsf{s}_1$ and $\mathsf{s}_2$ of (\ref{ex seq}) are $A\Phi$-conjugate if and only if there exists $z \in A\Phi$ such that $\forall$ $i= 1,\dots, n$, one has $x_i-y_i=\langle z, \alpha_i^{\vee}\rangle \alpha_i$. Moreover, by Lemma \ref{braid lemma} we know that $x_i, y_i \in A\alpha_i$, and then $x_i = a_i\alpha_i$ and $y_i = b_i\alpha_i$ for some $a_i, b_i \in A$. This is where the Cartan matrix enters; we omit the details in the below equivalences, which are worked out explicitly in Example~\ref{big example}.
{\small
\begin{align*}
\mathsf{s}_1 \thicksim \mathsf{s}_2 &\Longleftrightarrow \exists z \in A\Phi ~\text{such that}~ \left\{
\begin{array}{ll}
x_1-y_1= \langle z,\alpha_1^{\vee}\rangle\alpha_1\\
 ~~~~~~~~~~~\vdots~~~~~~~~~\\
x_n-y_n= \langle z,\alpha_n^{\vee}\rangle\alpha_n
\end{array}
\right. \\
 &\Longleftrightarrow \exists z \in A\Phi ~\text{such that}~ \left\{
\begin{array}{ll}
a_1-b_1= \langle z,\alpha_1^{\vee}\rangle\\
~~~~~~~~~~~\vdots~~~~~~~~~~\\
a_n-b_n= \langle z,\alpha_n^{\vee}\rangle.
\end{array}
\right. 
\end{align*}
}

Writing $z=z_1\alpha_1 +\cdots+ z_n\alpha_n$ it follows that
{\small
\begin{equation}\label{cartan equation}
\mathsf{s}_1 \thicksim \mathsf{s}_2 \Longleftrightarrow \exists z \in A\Phi ~\text{such that}~~ {}^{t}C_{\Phi} \begin{pmatrix}
z_1\\
\vdots \\
z_n
\end{pmatrix}
= \begin{pmatrix}
a_1-b_1\\
\vdots \\
a_n-b_n
\end{pmatrix}.
\end{equation} 
}

With the assumption: $C_{\Phi} \in \mathrm{GL}_n(A)$, we also have ${}^{t}C_{\Phi}^{-1}\in \mathrm{GL}_n(A)$ and then there is no constraint for the choices of $z_1,\dots, z_n$ in $A$. Therefore, as soon as $C_{\Phi} \in \mathrm{GL}_n(A)$, (\ref{cartan equation}) is always satisfied (in other words there is no obstruction between $\mathsf{s}_1$ and $\mathsf{s}_2$). This implies that for any pair of sections $\mathsf{s}_1, \mathsf{s}_2$, one has $\overline{\mathsf{s}}_1 = \overline{\mathsf{s}}_2$ in $H^1(W, A\Phi)$. Hence, there is only one element in $H^1(W, A\Phi)$, which implies the first direction. 

Let us show now the other direction. Assume that $C_{\Phi} \notin \mathrm{GL}_n(A)$.  Since $C_{\Phi} \in \mathrm{GL}_n(\mathbb{R})$ with its coefficients in $\mathbb{Z}$, the previous assumption implies in particular that ${}^{t}C_{\Phi}^{-1} \notin \mathrm{GL}_n(A)$, and then there exists a coefficient $q_{ij}$ of ${}^{t}C_{\Phi}^{-1}$ such that $q_{ij} \notin A$. Let $\mathsf{s}_1$ be the section with corresponding point $x:=(0,\dots ,1,\dots ,0) \in A\alpha_1\times\cdots \times A\alpha_n$ where 1 is in position $i$, and $\mathsf{s}_2$ be the trivial section. We have that $\mathsf{s}_1 \thicksim \mathsf{s}_2$, if and only if, there exists $z = z_1\alpha_1 +\cdots+ z_n\alpha_n \in A\Phi$ such that ${}^{t}C_{\Phi}z = x-0=x$. Therefore, it follows that 

{\small
$$
\begin{pmatrix}
z_1\\
\vdots\\
z_n
\end{pmatrix}
={}^{t}C_{\Phi}^{-1}
\begin{pmatrix}
0\\
\vdots \\
1 \\
\vdots \\
0
\end{pmatrix}
=
\begin{pmatrix}
q_{i1}\\
\vdots \\
q_{ij} \\
\vdots \\
q_{in}
\end{pmatrix}.
$$
}

Since $z_i \in A$ and $q_{ij} \notin A$, the sections $\mathsf{s}_1$ and $\mathsf{s}_2$ define two different elements in $H^1(W, A\Phi)$. This is impossible since $H^1(W, A\Phi)=0$. Therefore, we must have $C_{\Phi} \in \mathrm{GL}_n(A)$.
\end{proof}

\begin{proof}[Proof of Theorem \ref{A minimal}]
It is well known that the determinant of $C_{\Phi}$ is $f_{\Phi}$ (see \cite{BOURB} Ch. VI, $\S$ 1, exercice 7). Therefore $C_{\Phi}^{-1} = \frac{1}{det(C_{\Phi})}D=\frac{1}{f_{\Phi}}D$ where $D \in \mathrm{M}_n(\mathbb{Z})$. In particular we have $C_{\Phi} \in \mathrm{GL}_n(\mathbb{Z}_{f_{\Phi}})$. 

\medskip

\noindent $\bullet$ The direction $(ii)$ implies $(i)$ is a direct consequence of Theorem \ref{Cartan A}.

\medskip

\noindent $\bullet$ Let us show now the direction $(i)$ implies $(ii)$. Since $H^1(W, A\Phi) = 0$, Theorem \ref{Cartan A} tells us that $C_{\Phi} \in \mathrm{GL}_n(A)$, and then its inverse as well. Therefore, all the coefficients of $C_{\Phi}^{-1}$ are in $A$. However, all the coefficients of $C_{\Phi}^{-1}$ are of the form $\frac{d}{f_{\Phi}}$ for some $d \in \mathbb{Z}$. We claim then that the coefficient $\frac{1}{f_{\Phi}} \in A$. We have two cases to consider, either we are in type $D_n$ or not.

\begin{itemize}
    \item [(a)] In types $A, B, C, E_6, E_7, E_8, F_4$ and $G_2$, a quick inspection (see \cite{YY} together with its appendix for the exceptional cases) of the inverses of the Cartan matrices shows that there is always a coefficient of the form $\frac{d}{f_{\Phi}}$ and another one of the form $\frac{d+1}{f_{\Phi}}$ in the same Cartan matrix. Hence $\frac{d+1}{f_{\Phi}} - \frac{d}{f_{\Phi}} = \frac{1}{f_{\Phi}} \in A$.
    \item[(b)] In type $D_n$, a quick inspection of the inverse of the Cartan matrix shows that it contains both coefficients
$\frac{n-2}{4}$ and $\frac{n}{4}$ (see Section \ref{subD}). This implies in particular that $\frac{n}{4}-\frac{n-2}{4} = \frac{1}{2} \in A$, and since $A$ is a ring $\left(\frac{1}{2} \right)^2  = \frac{1}{4}$ also belongs to $A$. Since $f_{\Phi} = 4$ in type $D_n$, we do have $\frac{1}{f_{\Phi}} \in A$. 
\end{itemize}
Therefore, in each situation we have $1/f_{\Phi} \in A$, which implies that $\mathbb{Z}_{f_{\Phi}} \subset A$. Moreover, since $C_{\Phi} \in \mathrm{GL}_n(\mathbb{Z}_{f_{\Phi}})$, Theorem \ref{Cartan A} tells us that $H^1(W,\mathbb{Z}_{f_{\Phi}}\Phi) = 0$. Since $A$ is supposed to be minimal with respect to the property $H^1(W,A\Phi) = 0$, we must have $A=\mathbb{Z}_{f_{\Phi}}$.
\end{proof}



\subsection{\texorpdfstring{Concrete realization of $H^1(W, \mathbb{Z}\Phi)$}{Concrete realization of H1(W, ZΦ)}}\label{concrete}

Thanks to Lemma \ref{braid lemma}, setting $A=\mathbb{Z}$, we know that the sections of $\pi$ are in bijective correspondence with the elements of $\mathbb{Z}\alpha_1 \times \dots \times \mathbb{Z}\alpha_n$. We also know that the elements of $H^1(W, \mathbb{Z}\Phi)$ are in bijection with the sections of $\pi$ up to $\mathbb{Z}\Phi$-conjugacy. 
Furthermore, the condition of being $\mathbb{Z}\Phi$-conjugate, when considered as a condition on pairs of elements in $\mathbb{Z}\alpha_1 \times \dots \times \mathbb{Z}\alpha_n$, corresponds to the solvability of a system of linear equations defined by the transpose of the Cartan matrix (this is embodied through Proposition \ref{bij coh}).

 In this section we investigate in types $A, B, C$, and $D$ the $\mathbb{Z}\Phi$-conjugacy. 

\medskip
 
  First of all, notice that when the index of connection $f_{\Phi}$ is 1, the Cartan matrix is invertible in $\mathbb{Z},$ which implies via Theorem \ref{A minimal} that $H^1(W, \mathbb{Z}\Phi) = 0$. Hence, if $\Phi$ is of type $G_2$, $F_4$ or $E_8$, the cohomology in degree 1 is trivial.

Let $\mathsf{s}_1$, $\mathsf{s}_2 \in Sec(\pi)$. Let $(x_1,\dots ,x_n) \in  \mathbb{Z}\alpha_1 \times \cdots \times \mathbb{Z}\alpha_n$ be the corresponding point of $\mathsf{s}_1$ and $(y_1,\dots ,y_n)$ be the corresponding point of $\mathsf{s}_2$ (both via Lemma \ref{braid lemma}). Denote $x_i=a_i\alpha_i$ and $y_i=b_i\alpha_i$. We denote by $d_{i}(\mathsf{s}_1, \mathsf{s}_2)$, or just $d_{i}$ if there is no confusion, the number 
\begin{equation}\label{number d_i}
    d_{i} := a_i-b_i.
\end{equation}
 We also define
\begin{equation}
    d:=d_1\alpha_1 +\cdots+ d_n\alpha_n.
\end{equation}

\medskip

\subsubsection{\texorpdfstring{Type $A_n$}{Type An}}\label{subA}
In this section $W=W(A_n)$ and $\Phi = A_n$. We denote by $C:=C_{A_n}$ the Cartan matrix of $A_n$. The coefficients of $C^{-1}$ are known (see \cite{YY}) and are given by 
\begin{equation}
    (C^{-1})_{ij} = \frac{(n+1)\text{min}(i,j) -ij}{n+1},\quad 1 \leq i,j \leq n.
\end{equation}

In particular we see that the inverse of the Cartan matrix in type $A_n$ is symmetric, hence  ${}^{t}C^{-1}= C^{-1}$.

\begin{proposition}\label{H1 dans A}
The sections $\mathsf{s}_1$ and $\mathsf{s}_2$ define two different elements in $H^1(W, \mathbb{Z}\Phi)$ if and only if 
\begin{equation*}
    \overline{d_1} + 2\overline{d_2} +\cdots+ n\overline{d_n} \neq 0 \quad \text{in} \quad \mathbb{Z}/(n+1)\mathbb{Z}.
\end{equation*}

\end{proposition}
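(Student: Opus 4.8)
The plan is to reduce the statement to a concrete linear-algebra computation over $\mathbb{Z}$ using Proposition~\ref{bij coh}. By that proposition, $\mathsf{s}_1 \thicksim \mathsf{s}_2$ if and only if there exists $z = z_1\alpha_1 + \cdots + z_n\alpha_n \in \mathbb{Z}\Phi$ such that ${}^{t}C\,(z_1,\dots,z_n)^{t} = (d_1,\dots,d_n)^{t}$, where $d_i = a_i - b_i$ as defined above. Since in type $A$ the Cartan matrix $C$ is symmetric (equivalently ${}^{t}C^{-1} = C^{-1}$), this is the system $C z = d$, which over $\mathbb{R}$ has the unique solution $z = C^{-1} d$. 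So $\mathsf{s}_1$ and $\mathsf{s}_2$ are $\mathbb{Z}\Phi$-conjugate, i.e. define the \emph{same} element of $H^1(W,\mathbb{Z}\Phi)$, if and only if this unique real solution $C^{-1}d$ has all entries in $\mathbb{Z}$.

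The key step is therefore to compute $C^{-1}d$ and extract a clean integrality criterion. First I would write $(C^{-1}d)_i = \sum_{j=1}^{n} (C^{-1})_{ij} d_j = \frac{1}{n+1}\sum_{j=1}^n \bigl((n+1)\min(i,j) - ij\bigr) d_j$. The term $\sum_j (n+1)\min(i,j) d_j$ is an integer times $(n+1)$-over-$(n+1)$, hence already an integer, so modulo $\mathbb{Z}$ we have $(C^{-1}d)_i \equiv -\frac{i}{n+1}\sum_{j=1}^n j\, d_j \pmod{\mathbb{Z}}$. Thus $C^{-1}d \in \mathbb{Z}^n$ if and only if $\frac{i \cdot \sigma}{n+1} \in \mathbb{Z}$ for all $i = 1, \dots, n$, where $\sigma := \sum_{j=1}^n j\, d_j$. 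Taking $i = 1$ shows this forces $\sigma \equiv 0 \pmod{n+1}$; conversely if $\sigma \equiv 0 \pmod{n+1}$ then $i\sigma/(n+1) \in \mathbb{Z}$ for every $i$. Hence $\mathsf{s}_1 \thicksim \mathsf{s}_2$ if and only if $\sum_{j=1}^n j\, d_j \equiv 0 \pmod{n+1}$, which is exactly the negation of the claimed condition $\overline{d_1} + 2\overline{d_2} + \cdots + n\overline{d_n} \neq 0$ in $\mathbb{Z}/(n+1)\mathbb{Z}$. Since distinct sections define the same class in $H^1(W,\mathbb{Z}\Phi)$ precisely when they are $\mathbb{Z}\Phi$-conjugate (Theorem~\ref{bij H1}), this proves the proposition.

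The only mild subtlety — and the step I would be most careful about — is the bookkeeping with the $\alpha_i$ versus their coefficients: Proposition~\ref{bij coh} gives $y_i - x_i = \langle z, \alpha_i^{\vee}\rangle \alpha_i$, and one must make sure the scalar equation is $a_i - b_i = \langle z, \alpha_i^{\vee}\rangle = ({}^{t}C_{\Phi} z)_i$ with $z$ expressed in the basis of simple roots, as already set up in equation~\eqref{cartan equation}. Once that identification is in place, everything reduces to the integrality of $C^{-1}d$, and the computation above is routine. I would also note explicitly that the sign in $(C^{-1}d)_i \equiv -i\sigma/(n+1)$ is irrelevant to the integrality condition, so one may state the criterion as $\sum_j j\, d_j \equiv 0 \pmod{n+1}$ without worrying about it.
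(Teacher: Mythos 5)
Your proof is correct and follows essentially the same route as the paper: reduce via Proposition~\ref{bij coh} and equation~\eqref{cartan equation} to the integrality of $C^{-1}d$, then analyze divisibility by $n+1$. Your final step (observing that $(n+1)\min(i,j)d_j$ vanishes mod $n+1$, so integrality of the $i$-th coordinate is equivalent to $(n+1)\mid i\sigma$, and $i=1$ settles it) is a slightly cleaner packaging of the paper's substitution argument, but it is the same idea.
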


\begin{proof}
By (\ref{cartan equation}) we know that $\mathsf{s}_1 \thicksim \mathsf{s}_2$ if and only if there exists $z \in \mathbb{Z}\alpha_1 \times \cdots \times \mathbb{Z}\alpha_n$ such that $z={}^{t}C^{-1}d$, that is if and only if $z=C^{-1}d$. Denote $z=z_1\alpha_1 +\cdots+z_n\alpha_n$. We thus obtain the following system ($S$)

\small{
$$
 \left\{
\begin{array}{ll}
(n+1)z_1 =nd_1 + (n-1)d_2 +\cdots+ d_n \\
(n+1)z_2 =(n-1)d_1 + (2n-2)d_2 +\cdots+ 2d_n  \\
\text{~~} \text{~~} \text{~~} \text{~~} \text{~~} \text{~~} \text{~~} \text{~~} \text{~~} \text{~~} \text{~~} \text{~~} \text{~~} \text{~~}  \vdots \\
(n+1)z_k = [(n+1)\text{min}(k,1) -k]d_1 + [(n+1)\text{min}(k,2) -2k]d_2+\cdots+[(n+1)\text{min}(k,n) -nk]d_n \\
\text{~~} \text{~~} \text{~~} \text{~~} \text{~~} \text{~~} \text{~~} \text{~~} \text{~~} \text{~~} \text{~~} \text{~~} \text{~~} \text{~~} \vdots~\\
(n+1)z_n =d_1 + 2d_2 +\cdots+ nd_n.
\end{array}
\right. 
$$
}

Therefore, if there exists a $z$ satisfying this system, since all the $z_i$ belong to $\mathbb{Z}$, it must also satisfy for all $k=1,\dots ,n$ the following condition 
$$
\sum\limits_{j=1}^n\big[(n+1)\text{min}(k,j)-kj\big]d_j \in (n+1)\mathbb{Z},
$$
which is equivalent to the following equation in $\mathbb{Z}/(n+1)\mathbb{Z}$
$$
\sum\limits_{j=1}^n\big[(n+1)\text{min}(k,j)-kj\big]\overline{d_j} =0.
$$

We claim now that if $d_1 +2d_2 +\cdots+nd_n \in (n+1)\mathbb{Z}$ then we have $\sum\limits_{j=1}^n\big[(n+1)\text{min}(k,j)-kj\big]d_j \in (n+1)\mathbb{Z}$ for all other indices $k$. Since $d_1 +2d_2 +\cdots+nd_n \in (n+1)\mathbb{Z}$ there exists $r \in \mathbb{Z}$ such that $d_1 +2d_2 +\cdots+nd_n = (n+1)r$, that is $d_1 = (n+1)r-2d_2 -\cdots -nd_n$. Therefore 
\begin{align*}
 \sum\limits_{j=1}^n\big[(n+1)\text{min}(k,j)-kj\big]d_j  & =  (n+1-k)d_1 + \sum\limits_{j=2}^n\big[(n+1)\text{min}(k,j)-kj\big]d_j  \\
& = (n+1-k)((n+1)r-2d_2 -\cdots -nd_n)+ \sum\limits_{j=2}^n\big[(n+1)\text{min}(k,j)-kj\big]d_j \\
& = r(n+1-k)(n+1) + \sum\limits_{j=2}^n \big[(n+1)\text{min}(k,j)-kj-j(n+1-k)\big]d_j \\
& = r(n+1-k)(n+1) + \sum\limits_{j=2}^n \big[(n+1)(\text{min}(k,j)-j)\big]d_j \\
& = (n+1) \Big[ r(n+1-k) + \sum\limits_{j=2}^n \big[\text{min}(k,j)-j\big]d_j \Big].
\end{align*}

To summarize, if there exists $z \in \mathbb{Z}\Phi$ such that ($S$) is satisfied then 
we must have $${
\sum\limits_{j=1}^n\big[(n+1)\text{min}(k,j)-kj\big]\overline{d_j} =0
}$$ for all $k=1,\dots ,n$, which is equivalent to the equation $\overline{d_1} + 2\overline{d_2} +\cdots+ n\overline{d_n} = 0$. Therefore, if $\overline{d_1} + 2\overline{d_2} +\cdots+ n\overline{d_n} \neq 0$ there doesn't exist $z \in \mathbb{Z}\Phi$ such that $z={}^{t}C^{-1}d$, whence $\mathsf{s}_1$ and $\mathsf{s}_2$ define two different classes in $H^1(W,\mathbb{Z}\Phi)$. 

Conversely, if $\mathsf{s}_1$ and $\mathsf{s}_2$ define two different elements in $H^1(W,\mathbb{Z}\Phi)$ we do not have $z \in \mathbb{Z}\Phi$ satisfying ($S$). However, if $\overline{d_1} + 2\overline{d_2} +\cdots+ n\overline{d_n} = 0$ it follows that ${
\sum\limits_{j=1}^n\big[(n+1)\text{min}(k,j)-kj\big]\overline{d_j} =0
}$ for all $k=1,\dots ,n$. Thus, by setting $z_k := \frac{1}{n+1}\Big[\sum\limits_{j=1}^n\big[(n+1)\text{min}(k,j)-kj\big]d_j\Big]$ for all $k=1,\dots ,n$ we have built an element in $\mathbb{Z}\Phi $ such that $z={}^{t}C^{-1}d$, i.e. $\mathsf{s}_1$ and $\mathsf{s}_2$ are equivalent, which is impossible according to our assumption. 
\end{proof}

\begin{remark}\label{coro}
 Assume that $n+1$ is a prime number. Then for each pair of sections $\mathsf{s}_1$ and $\mathsf{s}_2$ that satisfy $d_i(\mathsf{s}_1,\mathsf{s}_2)=d_j(\mathsf{s}_1,\mathsf{s}_2)$ for all $i,j$, we have $\overline{\mathsf{s}}_1 = \overline{\mathsf{s}}_2$.
Indeed, let $\mathsf{s}_1$ and $\mathsf{s}_2$ be two sections satisfying $d_i(\mathsf{s}_1,\mathsf{s}_2)=d_j(\mathsf{s}_1,\mathsf{s}_2)$ for all $i,j$. 

Since $n+1$ is a prime number, the polynomial $X + 2X +\cdots+ nX$ admits all the elements of $\mathbb{Z}/(n+1)\mathbb{Z}$ as solutions. In particular, the equality $\overline{d_1} + 2\overline{d_2} +\cdots+ n\overline{d_n} =0$ is true. Therefore, because of Proposition \ref{H1 dans A} these two sections define the same element in $H^1(W(A_n),\mathbb{Z}\Phi)$.

Note that if $n+1$ is not prime, the above result does not necessarily hold. Indeed, assume that $n=3$ and let us take $\mathsf{s}_1$ that we identify with $(3,4,5)$ and $\mathsf{s}_2$ with $(6,7,8)$. We have here $d_i(\mathsf{s}_1,\mathsf{s}_2)= 3$ for $i=1,2,3$. Moreover $\overline{d_1}+2\overline{d_2}+3\overline{d_3} = \overline{2}\neq 0$. Hence $\overline{\mathsf{s}_1} \neq \overline{\mathsf{s}_2}$.
\end{remark}

\subsubsection{\texorpdfstring{Type $B_n$}{Type Bn}}\label{subB}

In this section $W=W(B_n)$ and $\Phi = B_n$. We denote by $C:=C_{B_n}$ the Cartan matrix of $B_n$. The coefficients of $C^{-1}$ are known (see \cite{YY}) and are given by 
\begin{equation}
(C^{-1})_{ij} = \frac{\text{min}(i,j)}{1-\text{min}(0,n-i-1)} =  \left\{
\begin{array}{ll}
\text{min}(i,j) &\text{if}~ i <n \\
\frac{j}{2} & \text{if}~ i=n
\end{array}
\right. 
1 \leq i,j \leq n.
\end{equation}

\begin{proposition}
The sections $\mathsf{s}_1$ and $\mathsf{s}_2$ define two different elements in $H^1(W, \mathbb{Z}\Phi)$ if and only if 
\begin{equation*}
    \overline{d_n} = 1 \quad \text{in} \quad \mathbb{Z}/2\mathbb{Z}.
\end{equation*}

\end{proposition}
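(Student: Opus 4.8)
The strategy is to carry out the same reduction as in the type $A$ case, using equation (\ref{cartan equation}): the sections $\mathsf{s}_1$ and $\mathsf{s}_2$ are $\mathbb{Z}\Phi$-conjugated if and only if there exists $z = z_1\alpha_1 + \cdots + z_n\alpha_n \in \mathbb{Z}\Phi$ with ${}^{t}C^{-1}d$ having integral entries, i.e. $z = {}^{t}C^{-1}d$ solvable over $\mathbb{Z}$. So I would first write down explicitly the vector ${}^{t}C^{-1}d$ using the given formula for $(C^{-1})_{ij}$ in type $B_n$. Since $(C^{-1})_{ij} = \min(i,j)$ for $i < n$ and $(C^{-1})_{nj} = j/2$, the transpose ${}^{t}C^{-1}$ has entries $({}^{t}C^{-1})_{ij} = (C^{-1})_{ji}$, which equals $\min(i,j)$ when $j<n$ and equals $i/2$ when $j = n$. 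Thus the $k$-th component of ${}^{t}C^{-1}d$ is $z_k = \sum_{j=1}^{n-1} \min(k,j)\,d_j + \tfrac{k}{2}\,d_n$.

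The key observation is then that for every $k$, the only possible non-integrality in $z_k$ comes from the term $\tfrac{k}{2} d_n$, since all the $\min(k,j) d_j$ terms are integers. So $z_k \in \mathbb{Z}$ for all $k$ if and only if $\tfrac{k}{2} d_n \in \mathbb{Z}$ for all $k = 1, \dots, n$; taking $k=1$ this is equivalent to $d_n$ being even, i.e. $\overline{d_n} = 0$ in $\mathbb{Z}/2\mathbb{Z}$. Hence $\mathsf{s}_1 \sim \mathsf{s}_2$ iff $\overline{d_n} = 0$, which is the contrapositive of the claimed statement: $\mathsf{s}_1$ and $\mathsf{s}_2$ define different classes in $H^1(W,\mathbb{Z}\Phi)$ iff $\overline{d_n} = 1$ in $\mathbb{Z}/2\mathbb{Z}$. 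I should double-check the direction of transposition against the $B_2$ worked example in the text (where ${}^{t}C_{B_2}^{-1} = \tfrac12\begin{pmatrix} 2 & 1 \\ 2 & 2\end{pmatrix}$) to make sure the half-integer entries sit in the correct column; in that example the half-integer $\tfrac12$ appears in position $(1,2)$, consistent with $({}^{t}C^{-1})_{ij}$ having denominator $2$ exactly when $j = n$, so the bookkeeping is as described.

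The main (and really only) obstacle is getting the transposition and indexing of $C^{-1}$ right, because unlike type $A$ the inverse Cartan matrix in type $B_n$ is not symmetric, so ${}^{t}C^{-1} \neq C^{-1}$ and one must be careful which of $i,j$ triggers the division by $2$. Once that is pinned down, the argument is a one-line divisibility check with no analogue of the type $A$ "claim" needed, since there is no summation-over-$k$ compatibility to verify: every component $z_k$ fails integrality simultaneously (precisely when $d_n$ is odd). I would therefore present the proof by (i) recording ${}^{t}C^{-1}$ explicitly, (ii) writing the system $z = {}^{t}C^{-1}d$, (iii) noting integrality of $z$ reduces to $d_n \in 2\mathbb{Z}$, and (iv) invoking Proposition \ref{bij coh} / equation (\ref{cartan equation}) to translate this into the statement about $H^1$.
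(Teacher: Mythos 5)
Your proof is correct and follows essentially the same route as the paper: write out the system $z = {}^{t}C^{-1}d$ coming from equation (\ref{cartan equation}), observe that only the $\tfrac{k}{2}d_n$ terms can fail to be integral, and conclude that conjugacy is equivalent to $d_n$ being even. Your extra care about the transposition (checked against the $B_2$ example) is exactly the right bookkeeping, and the argument is complete.
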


\begin{proof}
By (\ref{cartan equation}) we know that $\mathsf{s}_1 \thicksim \mathsf{s}_2$ if and only if there exists $z \in \mathbb{Z}\alpha_1 \times \cdots \times \mathbb{Z}\alpha_n$ such that $z={}^{t}C^{-1}d$. Denote $z=z_1\alpha_1 +\cdots+z_n\alpha_n$. The system ($S$) associated to the equation $z={}^{t}C^{-1}d$ is as follows
$$
 \left\{
\begin{array}{ll}
2z_1 =2d_1 + 2d_2 +\cdots+ 2d_{n-1}+ d_n \\ 
~~~~~~~~~~~~~~\vdots \\
2z_k = 2\text{min}(k,1)d_1 + 2\text{min}(k,2)d_2+\cdots+2\text{min}(k,n-1)d_{n-1}+kd_n \\
~~~~~~~~~~~~~~\vdots~\\
2z_n =2d_1 + 2d_2 +\cdots+ 2(n-1)d_{n-1}+nd_n.
\end{array}
\right. 
$$

Therefore,  $\mathsf{s}_1 \thicksim \mathsf{s}_2$ if and only if $kd_n \in 2\mathbb{Z}$ for all $k=1,\dots,n$, that is if and only if $\overline{d_n} = 0$ in $\mathbb{Z}/2\mathbb{Z}$. This ends the proof.
\end{proof}

\bigskip

\subsubsection{\texorpdfstring{Type $C_n$}{Type Cn}}\label{subC} In this section $W=W(C_n)$ and $\Phi = C_n$. We denote by $C:=C_{C_n}$ the Cartan matrix of $C_n$. The coefficients of $C^{-1}$ are known (see \cite{YY}) and are given by 

\begin{equation}
(C^{-1})_{ij} = \frac{\text{min}(i,j)}{1-\text{min}(0,n-j-1)} =  \left\{
\begin{array}{ll}
\text{min}(i,j) &\text{if}~ j <n \\
\frac{i}{2} & \text{if}~ j=n
\end{array}
\right. 
1 \leq i,j \leq n.
\end{equation}

\begin{proposition}
 Write $I_n:=\{k \in \llbracket 1,n \rrbracket~|~k~ \text{is odd}\}$. The sections $\mathsf{s}_1$ and $\mathsf{s}_2$ define two different elements in $H^1(W, \mathbb{Z}\Phi)$ if and only if 
 \begin{equation*}
     \sum\limits_{k \in I_n}\overline{d_k} = 1 \quad \text{in} \quad \mathbb{Z}/2\mathbb{Z}.
 \end{equation*}

\end{proposition}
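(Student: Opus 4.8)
The plan is to argue exactly as in the $A_n$ and $B_n$ cases treated above, now with $C:=C_{C_n}$. Keeping the notation $x_i=a_i\alpha_i$, $y_i=b_i\alpha_i$, $d_i=a_i-b_i$ introduced above, equation (\ref{cartan equation}) says that $\mathsf{s}_1\sim\mathsf{s}_2$ if and only if the linear system ${}^{t}C\,z=(d_1,\dots,d_n)^{t}$, with $z=z_1\alpha_1+\cdots+z_n\alpha_n$, admits a solution with every $z_k\in\mathbb{Z}$. Since $C$ is invertible over $\mathbb{R}$, this amounts to deciding whether the unique real solution $z={}^{t}C^{-1}(d_1,\dots,d_n)^{t}$ is integral, so the whole proof reduces to reading off ${}^{t}C^{-1}$ from the stated formula for $C^{-1}$ and locating its non-integral entries.

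First I would transpose the inverse Cartan matrix: from $(C^{-1})_{ij}=\min(i,j)$ for $j<n$ and $(C^{-1})_{in}=\tfrac{i}{2}$ one gets $({}^{t}C^{-1})_{kj}=(C^{-1})_{jk}$, which equals $\min(k,j)$ when $k<n$ and equals $\tfrac{j}{2}$ when $k=n$. Thus the half-integer entries of $C^{-1}$, which occupy its last column, sit in the last row of ${}^{t}C^{-1}$. Writing $z={}^{t}C^{-1}(d_1,\dots,d_n)^{t}$ componentwise therefore gives $z_k=\sum_{j=1}^{n}\min(k,j)\,d_j$ for $1\le k\le n-1$ and $2z_n=\sum_{j=1}^{n}j\,d_j$. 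For $k<n$ every coefficient $\min(k,j)$ is an integer, so those $z_k$ are automatically integral, and the system admits an integral solution if and only if $\sum_{j=1}^{n}j\,d_j\in 2\mathbb{Z}$. When this holds, taking $z_n=\tfrac12\sum_{j}j\,d_j$ and the $z_k$ as above produces an element $z\in\mathbb{Z}\Phi$ with ${}^{t}Cz=(d_1,\dots,d_n)^{t}$, so $\mathsf{s}_1\sim\mathsf{s}_2$; when it fails, no integral $z$ exists, so by Proposition \ref{bij coh} the sections $\mathsf{s}_1$ and $\mathsf{s}_2$ determine two distinct classes in $H^1(W,\mathbb{Z}\Phi)$.

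It then remains only to rewrite the parity condition in the stated shape. Modulo $2$, $j\,d_j\equiv 0$ when $j$ is even and $j\,d_j\equiv d_j$ when $j$ is odd, so $\sum_{j=1}^{n}j\,d_j\equiv\sum_{k\in I_n}d_k\pmod 2$. Hence $\mathsf{s}_1\sim\mathsf{s}_2$ if and only if $\sum_{k\in I_n}\overline{d_k}=0$ in $\mathbb{Z}/2\mathbb{Z}$, equivalently $\mathsf{s}_1$ and $\mathsf{s}_2$ define two different elements of $H^1(W,\mathbb{Z}\Phi)$ exactly when $\sum_{k\in I_n}\overline{d_k}=1$, which is the assertion.

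I do not expect a real obstacle, but the one point needing care is that in type $C_n$ the inverse Cartan matrix is not symmetric, so one genuinely has to work with ${}^{t}C^{-1}$ and not with $C^{-1}$: transposing moves the half-integer entries from the last column of $C^{-1}$ to the last row of ${}^{t}C^{-1}$, and this is precisely why the single surviving congruence is $\sum_j j\,d_j\equiv 0\pmod 2$ --- a condition involving every odd index in $I_n$ --- rather than merely $d_n\equiv 0\pmod 2$ as in type $B_n$. Apart from this, one only has to observe that the rows $k<n$ hide no additional congruence, which is immediate since $\min(k,j)\in\mathbb{Z}$.
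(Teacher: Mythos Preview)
Your proposal is correct and follows essentially the same approach as the paper: both reduce, via equation (\ref{cartan equation}), to writing the unique real solution $z={}^{t}C^{-1}(d_1,\dots,d_n)^{t}$ and observing that the rows $k<n$ of ${}^{t}C^{-1}$ have integer entries $\min(k,j)$, so only the last equation $2z_n=\sum_j j\,d_j$ imposes a congruence, which modulo $2$ becomes $\sum_{k\in I_n}\overline{d_k}=0$. Your closing remark about why one must use ${}^{t}C^{-1}$ rather than $C^{-1}$ in type $C_n$ (and how this distinguishes the answer from type $B_n$) is a nice clarifying addition not present in the paper's terse proof.
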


\begin{proof}
Denote $z=z_1\alpha_1 +\cdots+z_n\alpha_n$. Similarly as the $A_n$ and $B_n$ cases above, we obtain a system ($S$) associated to the equation $z={}^{t}C^{-1}d$, which is as follows
$$
 \left\{
\begin{array}{ll}
2z_1 =2d_1 + 2d_2 +\cdots+ 2d_n \\ 
~~~~~~~~~~~~~~\vdots \\
2z_k = 2\text{min}(k,1)d_1 + 2\text{min}(k,2)d_2+\cdots+2\text{min}(k,n)d_n \\
~~~~~~~~~~~~~~\vdots~\\
2z_n =d_1 + 2d_2 +\cdots+ nd_n.
\end{array}
\right. 
$$

It is obvious that $2\text{min}(k,1)d_1+\cdots+2\text{min}(k,n)d_n \in 2\mathbb{Z}$ for all  $k=1,\dots ,n-1$. Therefore, the only equation that matters in this system is the last one: $2z_n=d_1 + 2d_2 +\cdots+ nd_n \in 2\mathbb{Z}$. This equation transferred in $\mathbb{Z}/2\mathbb{Z}$ becomes $\sum\limits_{k \in I_n}k\overline{d_k} = 0$ and then $\sum\limits_{k \in I_n}\overline{d_k} = 0$. The result follows.
\end{proof}

\subsubsection{\texorpdfstring{Type $D_n$}{Type Dn}}\label{subD} In this section $W=W(D_n)$ and $\Phi = D_n$. We denote by $C:=C_{D_n}$ the Cartan matrix of $D_n$. It turns out that $C^{-1}$ is symmetric, this is why we just give the coefficients of $C^{-1}$ with entries  $1\leq i \leq j \leq n$

\begin{equation}
(C^{-1})_{ij} = \left\{
\begin{array}{ll}
i &\text{if}~ 1 \leq i \leq j \leq n-2 \\
\frac{i}{2} & \text{if}~ i<n-1, j=n-1 ~\text{or}~n\\
\frac{n-2}{4} & \text{if}~ i=n-1, j=n\\
\frac{n}{4} & \text{if}~ i=j=n-1 ~\text{or}~n.\\
\end{array}
\right. 
\end{equation}

A better visualization of this matrix is given by: 

\small{
$$
C^{-1}= \frac{1}{4}
  \begin{pmatrix}[ccccc|cc]
   4         & 4          & 4          & \cdots & 4           & 2                  & 2 \\
   4         & 8          & 8           & \cdots & 8          & 4                  & 4 \\
   4         & 8          & 12         & \cdots & 12         & 6                  & 6 \\
   \vdots & \vdots  & \vdots  & \ddots & \vdots  & \vdots          & \vdots \\
    4       & 8          & 12         & \cdots & {\scriptstyle 4(n-2) }    & {\scriptstyle2(n-2)}     & {\scriptstyle2(n-2)} \\ \hline
    2         & 4          & 6         & \cdots &  {\scriptstyle2(n-2)}       & {\scriptstyle n}   & {\scriptstyle n-2} \\
     2         & 4          & 6         & \cdots &  {\scriptstyle2(n-2)}       & {\scriptstyle n-2}            & {\scriptstyle n} \\
\end{pmatrix}.
$$
}

\bigskip

\begin{proposition}
Let $I_{n-2}:=\{k \in \llbracket 1,n-2 \rrbracket~|~k~ \text{is odd}\}$. The sections $\mathsf{s}_1$ and $\mathsf{s}_2$ define two different elements in $H^1(W, \mathbb{Z}\Phi)$ if and only if one of the following points is satisfied
\begin{enumerate}
\item[(i)] $\overline{d_{n-1}} \neq \overline{d_n}$ in $\mathbb{Z}/2\mathbb{Z}$,
\item[(ii)]  $\sum\limits_{k \in I_{n-2}}2\overline{d_k} + n\overline{d_{n-1}} + (n-2)\overline{d_n} \neq 0$ in $\mathbb{Z}/4\mathbb{Z}$,
\item[(iii)] $\sum\limits_{k \in I_{n-2}}2\overline{d_k} + (n-2)\overline{d_{n-1}} + n\overline{d_n} \neq 0$ in $\mathbb{Z}/4\mathbb{Z}$.
\end{enumerate}

\end{proposition}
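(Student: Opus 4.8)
The plan is to follow exactly the template set by the $A_n$, $B_n$ and $C_n$ cases: reduce the question of $\mathbb{Z}\Phi$-conjugacy to the solvability over $\mathbb{Z}$ of the linear system ${}^{t}C_{D_n}z = d$ via equation (\ref{cartan equation}), and then read off the integrality constraints from the explicit inverse $C_{D_n}^{-1}$ given above (recalling that $C_{D_n}$ is symmetric, so ${}^{t}C_{D_n}^{-1}=C_{D_n}^{-1}$). Concretely, $\mathsf{s}_1\sim\mathsf{s}_2$ iff $z:=C_{D_n}^{-1}d$ has all integer entries, so I would write out the system ($S$): for $1\le k\le n-2$,
$$
4z_k = \sum_{j=1}^{n-2}4\min(k,j)\,d_j + 2k\,d_{n-1} + 2k\,d_n,
$$
and for the last two rows
$$
4z_{n-1} = \sum_{j=1}^{n-2}2j\,d_j + n\,d_{n-1} + (n-2)\,d_n,\qquad
4z_n = \sum_{j=1}^{n-2}2j\,d_j + (n-2)\,d_{n-1} + n\,d_n.
$$
The sections define different classes in $H^1(W,\mathbb{Z}\Phi)$ iff this system has \emph{no} integral solution $z$.

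Next I would simplify the constraints. For $1\le k\le n-2$ the right-hand side is $\sum_j 4\min(k,j)d_j + 2k(d_{n-1}+d_n)$; the first sum is divisible by $4$, so that row imposes exactly $2k(d_{n-1}+d_n)\equiv 0 \pmod 4$, i.e. $k(d_{n-1}+d_n)\equiv 0\pmod 2$ for all $1\le k\le n-2$. Since $n\ge 4$ we may take $k=1$, so this is equivalent to $\overline{d_{n-1}}=\overline{d_n}$ in $\mathbb{Z}/2\mathbb{Z}$ — giving the negation of point i). For the last two rows, I would split $\sum_{j=1}^{n-2}2j\,d_j$ according to the parity of $j$: the even-$j$ terms contribute multiples of $4$, leaving $\sum_{k\in I_{n-2}}2d_k$ modulo $4$. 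Thus the two bottom equations are solvable over $\mathbb{Z}$ iff
$$
\sum_{k\in I_{n-2}}2\overline{d_k} + n\overline{d_{n-1}} + (n-2)\overline{d_n} \equiv 0 \pmod 4
$$
and the analogous equation with $d_{n-1}\leftrightarrow d_n$ swapped — exactly the negations of ii) and iii).

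The remaining point is to argue that these three modular conditions are not merely \emph{necessary} but also \emph{sufficient} for the existence of an integral $z$: one must check that when all of i), ii), iii) fail (i.e. all three congruences hold), the values $z_k := \frac14\big(\sum_j 4\min(k,j)d_j + 2k(d_{n-1}+d_n)\big)$ for $k\le n-2$ and $z_{n-1}, z_n$ given by the bottom two formulas are genuinely integers, and that this $z$ solves \emph{every} row of ($S$) simultaneously — as in the $A_n$ proof, where one extra congruence was shown to propagate to all $k$. Here the reduction is cleaner because the rows decouple: the first $n-2$ rows each only involve $d_{n-1}+d_n\bmod 2$, and the last two rows are handled directly, so no propagation lemma is needed. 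I expect the main obstacle to be purely bookkeeping: keeping the $\min(k,j)$ sums, the parity split of the $2j\,d_j$ terms, and the factor-of-$4$ denominators straight, and double-checking the boundary indices ($k=n-2$ versus $n-1$, the lower-triangular versus upper-triangular form of $C_{D_n}^{-1}$). Once the system is written correctly, each equivalence is immediate.
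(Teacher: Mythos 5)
Your proposal is correct and follows essentially the same route as the paper: write out the system $(S)$ from ${}^{t}C_{D_n}^{-1}$, read the condition $\overline{d_{n-1}}=\overline{d_n}$ off the first $n-2$ rows and the two mod-$4$ congruences off the last two rows, with the ``different classes'' conclusion being the failure of at least one congruence. Your remark that the rows decouple, so that sufficiency needs no propagation argument as in type $A$, is a slightly more explicit treatment of the converse than the paper gives, but it is the same proof.
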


\begin{proof}
By (\ref{cartan equation}) we know that $\mathsf{s}_1 \thicksim \mathsf{s}_2$ if and only if there exists $z \in \mathbb{Z}\alpha_1 \times \cdots \times \mathbb{Z}\alpha_n$ such that $z={}^{t}C^{-1}d$, that is if and only if $z=C^{-1}d$.
Denote $z=z_1\alpha_1 +\cdots+z_n\alpha_n$. The first line of the corresponding system ($S$) is given by:
$$4z_1 = 4d_1 + 4d_2 +\cdots+ 4d_{n-2} + 2d_{n-1} + 2d_n.$$

Thus, we must have $4d_1 + 4d_2 +\cdots+ 4d_{n-2} + 2d_{n-1} + 2d_n \in 4\mathbb{Z}$, that is $2d_{n-1} + 2d_n \in 4\mathbb{Z}$, which is equivalent to $\overline{d_{n-1}} = \overline{d_n}$ in $\mathbb{Z}/2\mathbb{Z}$. We see then that if $\overline{d_{n-1}} = \overline{d_n}$, the $(n-2)$ first equations of ($S$) are satisfied. 

The penultimate equation of ($S$) is
$$
4z_{n-1} = \sum\limits_{k=1}^{n-2}2kd_k + nd_{n-1} + (n-2)d_n.
$$
Therefore, this equality compels us to have 
$$
\sum\limits_{k=1}^{n-2}2kd_k + nd_{n-1} + (n-2)d_n \in 4\mathbb{Z},
$$ 
that is  ${\sum\limits_{k=1}^{n-2}2k\overline{d_k} + n\overline{d_{n-1}} + (n-2)\overline{d_n} = 0}$ in $\mathbb{Z}/4\mathbb{Z}$. If $k$ is even $2k\overline{d_k}=0$, and if $k$ is odd $2k\overline{d_k}=2\overline{d_k}$. Hence we have 
$$
\sum\limits_{k=1}^{n-2}2k\overline{d_k} + n\overline{d_{n-1}} + (n-2)\overline{d_n} = \sum\limits_{k \in I_{n-2}}2\overline{d_k} + n\overline{d_{n-1}} + (n-2)\overline{d_n} = 0.$$

The reasoning is exactly the same for the last equation. This concludes the proof.
\end{proof}

\section{Orientation of alcoves}\label{Sec orientation alcoves}

\subsection{Intuitive notion of orientation}\label{intuitive section}
Informally, the orientation of an alcove $A_w$ describes how its shape and configuration relate to the fundamental alcove $A_e$. We say that
two alcoves have the same orientation if they have the same shape and the same configuration.

We can be more precise about what we mean by the terms \say{shape} and \say{configuration}.
We illustrate this in the examples of types $A_2$ and $B_2$ below.

\subsubsection{\texorpdfstring{Type $A_2$}{Type A2}}
In the figure below, wall colors indicate the right weak order:
crossing a wall of a given color corresponds to applying the associated simple reflection on the right.
Pink corresponds to $s_0$, purple to $s_1$, and yellow to $s_2$.

\bigskip

\begin{figure}[h!]
    \centering
    \includegraphics[scale = 0.41]{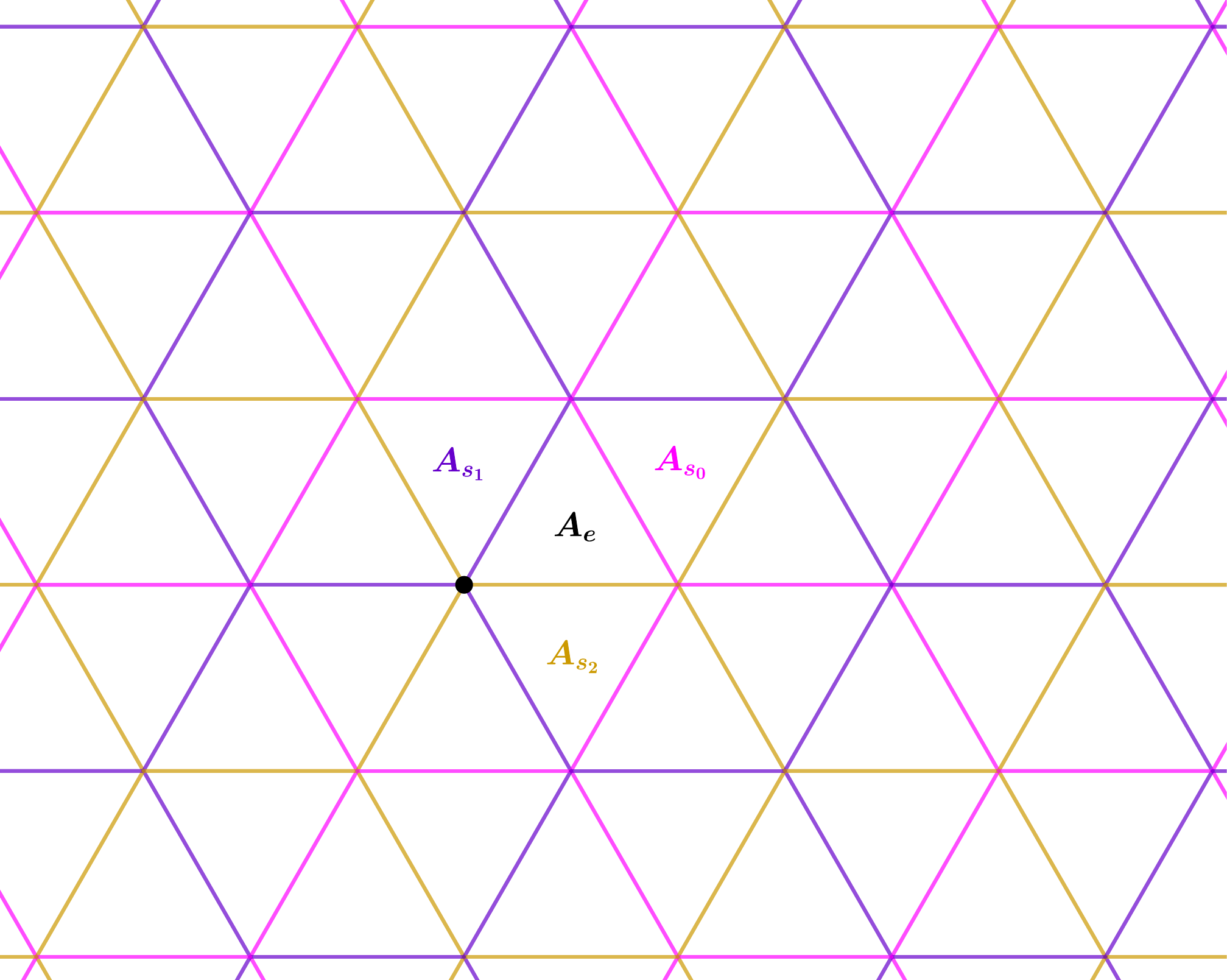}
    \caption{Alcoves in affine type $A_2$.}
    \label{affine_alcoves_A2}
\end{figure}

\bigskip

We have two different shapes, given by the elements of the fundamental parallelepiped:  $\text{Alc}(P_{A_2}) = \{e,\, s_0\}$ (see Section \ref{section parallelepiped}), and drawn below.

\bigskip

\begin{figure}[h!]
    \centering
    \includegraphics[scale = 0.34]{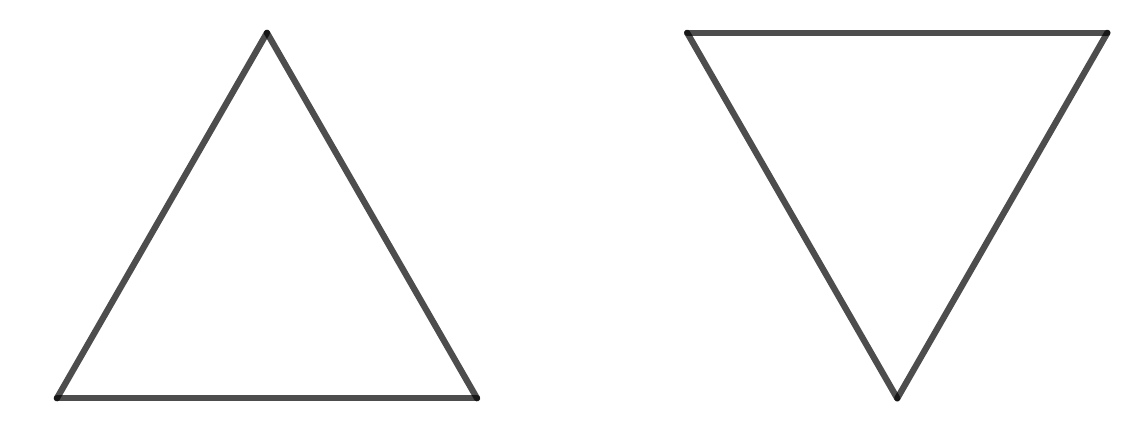}
    \caption{The two shapes in type $A_2$.}
    \label{the two shapes}
\end{figure}

\bigskip

But now, for each shape we have 3 different configurations, that is 3 different wall colorings.

\begin{figure}[h!]
    \centering
    \includegraphics[scale = 0.34]{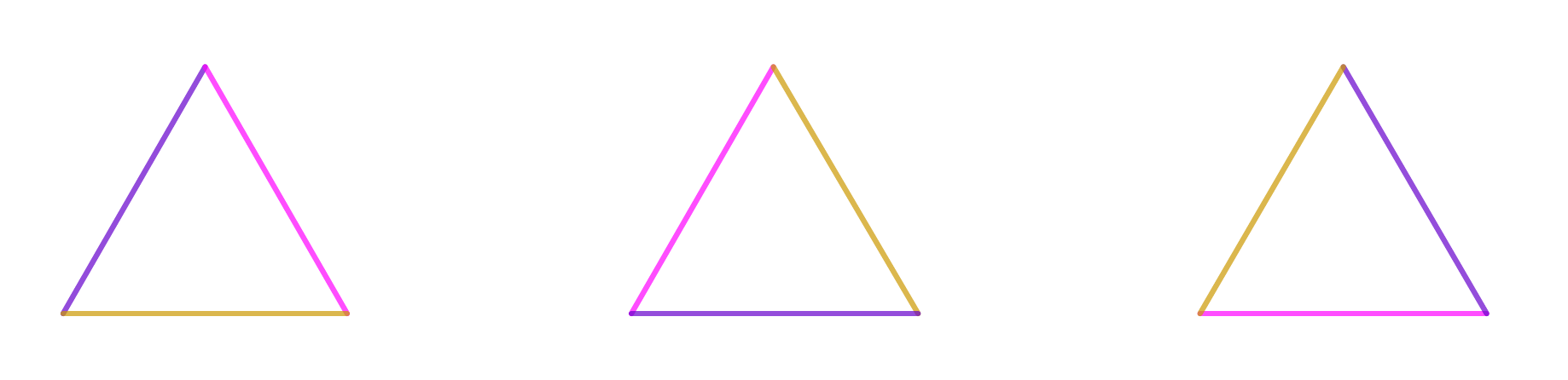}
    \caption{The 3 configurations of the first shape.}
    \label{config shape 1}
\end{figure}

\medskip

\begin{figure}[h!]
    \centering
    \includegraphics[scale = 0.34]{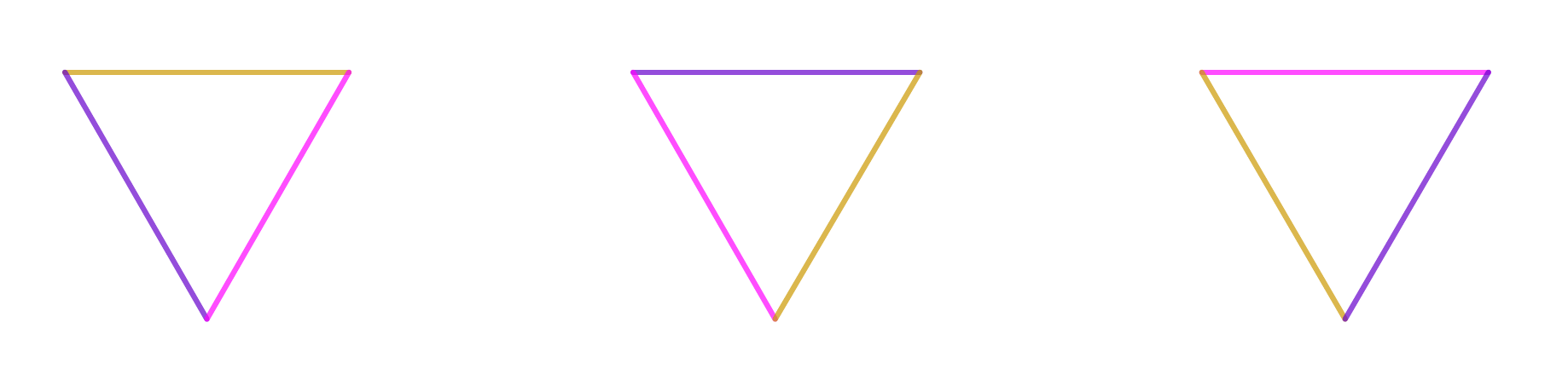}
    \caption{The 3 configurations of the second shape.}
    \label{config shape 2}
\end{figure}

\subsubsection{\texorpdfstring{Type $B_2$}{Type B2}}
Consider the alcoves of affine type \(B_2\). As in type $A_2$, the wall coloring (rather than the hyperplane coloring) encodes the right weak order.

The set of positive roots is
$
\Phi^+ = \{\alpha_1,\ \alpha_2,\ \alpha_1+\alpha_2,\ \alpha_1+2\alpha_2\}.
$
Thus there are four families of affine hyperplanes. Yet the associated affine Weyl group has rank $3$, so an alcove is bounded by only three hyperplanes. This mismatch introduces a subtlety that does not appear in type $A_2$.

\begin{figure}[h!]
    \centering
    \includegraphics[scale = 0.36]{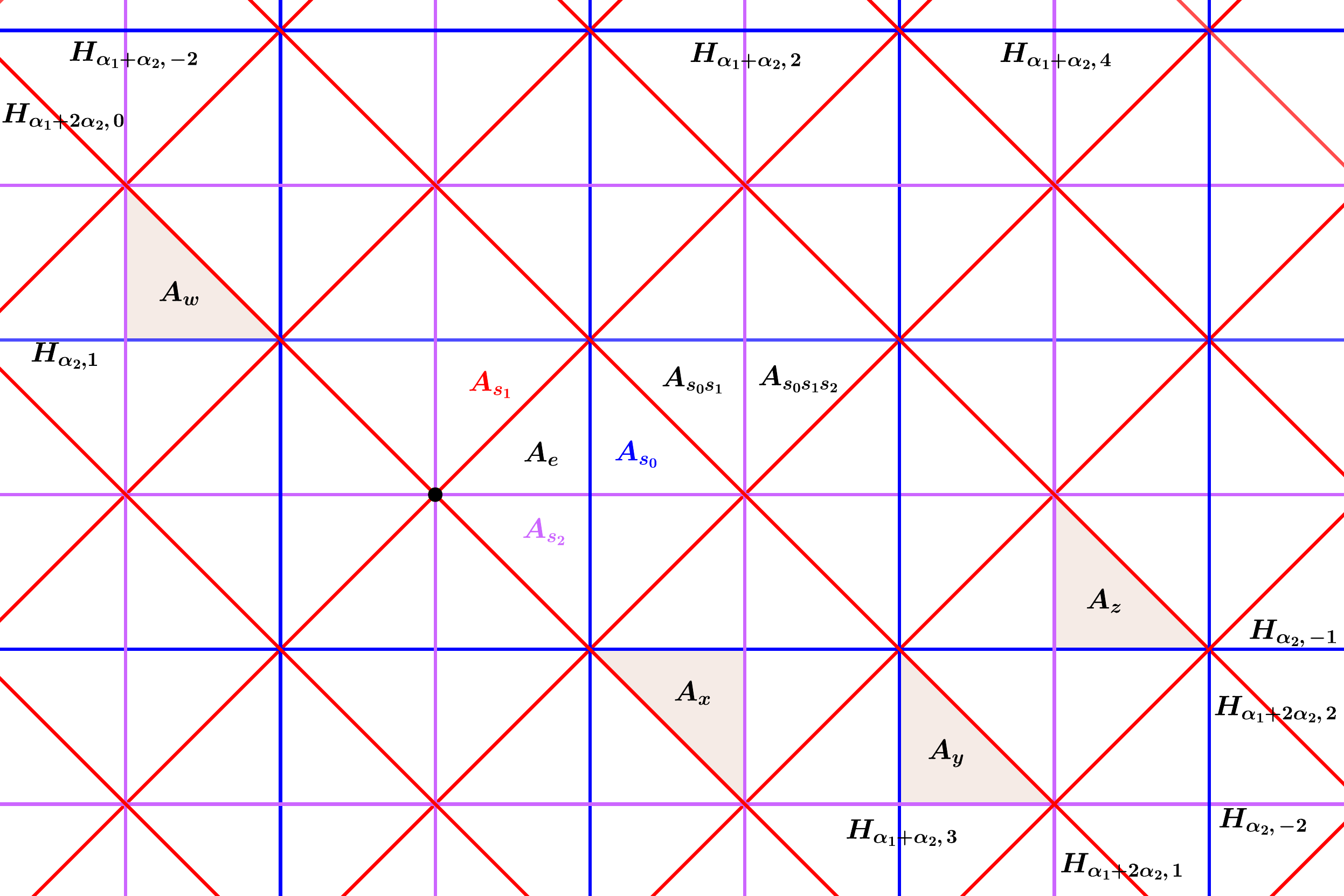}
    \caption{Alcoves in affine type $B_2$. The labeled hyperplanes are precisely those that bound the four alcoves \(A_w\), \(A_x\), \(A_y\), and \(A_z\). The four elements of $P_{B_2} = \{e,s_0, \, s_0s_1, \, s_0s_1s_2\}$ are also indicated.}
    \label{Alcoves B_2}
\end{figure}

The subtlety mentioned above is as follows. One might expect that having the same shape is equivalent
to having the same set of bounding hyperplanes. This is not true. For instance, the alcoves $A_w$
and $A_x$ share the same family of bounding hyperplanes
$$
\{H_{\alpha_2,*},\, H_{\alpha_1+\alpha_2,*},\, H_{\alpha_1+2\alpha_2,*}\},
$$
yet they do not have the same shape. Only one implication holds: having the same shape forces the
same family of bounding hyperplanes. This distinction may seem innocuous in low rank, but it becomes difficult to distinguish shapes as the rank grows in $n$, whereas $\Phi^+(B_n)$ grows in $n^2$. 
In fact, the number of possible shapes is known in general: it is equal to the number of alcoves
contained in $P_{\Delta}$; see~(\ref{number alcove funda para}).

Therefore in this case we have four different shapes corresponding to the elements of $\text{Alc}(P_{B_2}) = \{e,\, s_0, \,s_0s_1,\,s_0s_1s_2\}$.

\begin{figure}[h!]
    \centering
    \includegraphics[scale = 0.36]{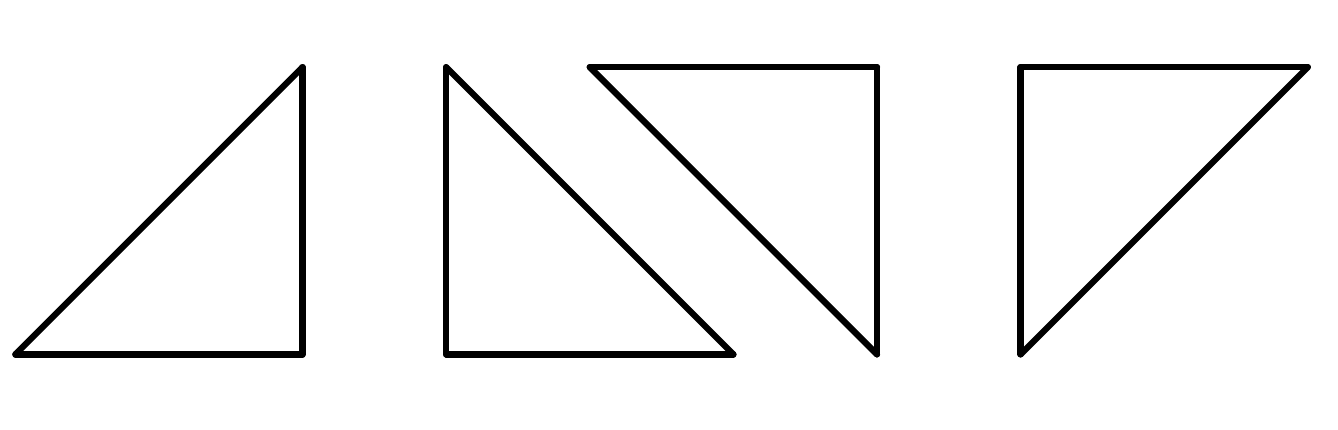}
    \caption{The four shapes in type $B_2$.}
    \label{shape Alcoves B_2}
\end{figure}

\medskip

For each shape we have two different configurations.

\begin{figure}[h!]
    \centering
    \includegraphics[scale = 0.36]{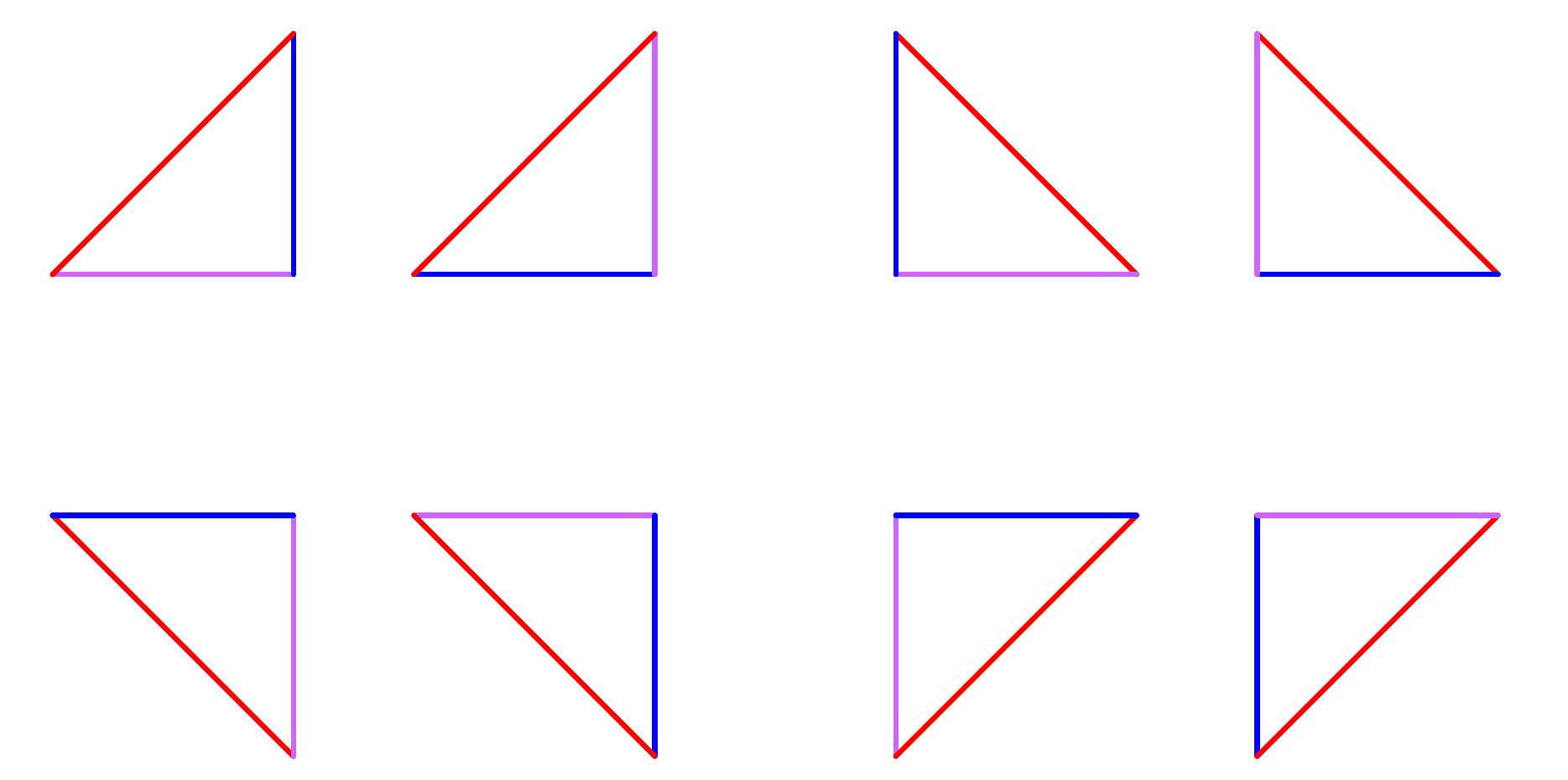}
    \caption{The two configurations of each shape in type $B_2$.}
    \label{orientatation Alcoves B_2}
\end{figure}

In Figure~\ref{Alcoves B_2}, the alcoves $A_y$ and $A_z$ share the same family of bounding hyperplanes
and the same shape; however, they have different configurations. Consequently, they do not have the
same orientation. By contrast, $A_w$ and $A_z$ have the same shape and the same configuration, and
therefore the same orientation.

\subsection{Formal definition of orientation and main results}

The formal definition of the orientation is the following: two alcoves $A_w$ and $A_{w'}$ have the same orientation if and only if $w'=\tau_xw$ for some $x \in \mathbb{Z}\Phi$. Since the irreducible components of $\widehat{X}_{W_a}$ are stable under the action of $\mathbb{Z}\Phi$, we can restrict the definition of having the same orientation to the components, namely $A_w$ and $A_{w'}$ can have the same orientation  only if $\iota(w)$ and $\iota(w')$ belong to the same component.

Let $w \in W_a$. We define $\mathsf{s}_w$ to be the section of (\ref{eq seq Z}) given by $\mathsf{s}_w(s_i) = \tau_{k(w,\alpha_i)\alpha_i}s_i$ for all $\alpha_i \in \Delta$. 

\begin{theorem}\label{coho eq orientation}
Let $w, w' \in W_a$ such that $\iota(w)$ and $\iota(w')$ are in the same component. Then $A_w$ and $A_{w'}$ have the same orientation if and only if $\mathsf{s}_{w}$ and $\mathsf{s}_{w'}$ define the same element in $H^1(W,\mathbb{Z}\Phi)$.
\end{theorem}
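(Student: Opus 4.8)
The plan is to reduce the statement to Proposition~\ref{bij coh} by translating both the ``same orientation'' condition and the ``same cohomology class'' condition into arithmetic statements about the differences of the simple Shi coefficients, and then check these arithmetic statements agree given the hypothesis that $\iota(w)$ and $\iota(w')$ lie in the same component. First I would unwind the definition of orientation: $A_w$ and $A_{w'}$ have the same orientation iff $w'=\tau_x w$ for some $x\in\mathbb{Z}\Phi$, equivalently iff $\overline{w}=\overline{w'}$ (the finite parts coincide). Indeed, writing $w=\tau_a\overline w$ and $w'=\tau_b\overline{w'}$, one has $w'=\tau_x w$ iff $\overline{w'}=\overline w$ and $x=b-a$. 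So the orientation condition is purely ``$\overline w=\overline{w'}$''. On the cohomology side, by Proposition~\ref{bij coh} the sections $\mathsf{s}_w$ and $\mathsf{s}_{w'}$ (given by $\mathsf{s}_w(s_i)=\tau_{k(w,\alpha_i)\alpha_i}s_i$ and similarly for $w'$) define the same class iff there exists $z\in\mathbb{Z}\Phi$ with $k(w',\alpha_i)-k(w,\alpha_i)=\langle z,\alpha_i^\vee\rangle$ for all $i=1,\dots,n$, i.e.\ the vector of differences of simple Shi coefficients lies in the image of ${}^{t}C_\Phi$ acting on $\mathbb{Z}^n$.

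Next I would bring in the hypothesis that $\iota(w)$ and $\iota(w')$ lie in the same irreducible component $X_{W_a}[\lambda]$. By Theorem~\ref{polynome} and the definition of the components, for each $\theta\in\Phi^+$ one has $k(w,\theta)=P_\theta(\{k(w,\alpha_i)\})+\lambda_\theta$ with the same $\lambda_\theta$ for $w$ and $w'$ (this is exactly what ``same component'' means), and $P_\theta$ is the linear polynomial with $\theta^\vee=P_\theta(\alpha_1^\vee,\dots,\alpha_n^\vee)$. Hence the entire Shi vector of $w$, resp.\ $w'$, is determined by its simple coordinates $(k(w,\alpha_i))_i$, resp.\ $(k(w',\alpha_i))_i$, via the common affine-linear map. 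This is the key point that lets the finite-dimensional computation with the Cartan matrix control everything. I would then recall (or re-derive from the $W_a\simeq\mathbb{Z}\Phi\rtimes W$ description) the standard fact relating the finite part $\overline w$ to the simple Shi coefficients: crossing from $A_e$ to $A_w$ the finite part is recorded by the ``fractional'' information of the Shi vector, and two alcoves in the same component have the same finite part iff their simple Shi coefficients differ by something of the form $\langle z,\alpha_i^\vee\rangle$ with $z\in\mathbb{Z}\Phi$ — precisely because translating $w$ by $\tau_z$ changes $k(w,\alpha_i)$ by $\langle z,\alpha_i^\vee\rangle$ and does not change the component, while conversely any two elements of the same component with the same finite part differ by such a translation.

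So the proof structure is: (i) same orientation $\iff$ $\overline w=\overline{w'}$ $\iff$ $w'=\tau_z w$ for some $z\in\mathbb{Z}\Phi$; (ii) if $w'=\tau_z w$ then $k(w',\alpha_i)=k(w,\alpha_i)+\langle z,\alpha_i^\vee\rangle$ for all $i$, so by Proposition~\ref{bij coh}, $\mathsf{s}_w\sim\mathsf{s}_{w'}$, hence they define the same class; (iii) conversely, if $\mathsf{s}_w$ and $\mathsf{s}_{w'}$ define the same class, Proposition~\ref{bij coh} gives $z\in\mathbb{Z}\Phi$ with $k(w',\alpha_i)-k(w,\alpha_i)=\langle z,\alpha_i^\vee\rangle$; since $\iota(w)$ and $\iota(w')$ are in the same component, the linear formulas of Theorem~\ref{polynome} propagate this to $k(w',\theta)-k(w,\theta)=\langle z,\theta^\vee\rangle$ for all $\theta\in\Phi^+$, which is exactly the Shi vector of $\tau_z w$; by Shi's characterization (Theorem~\ref{thJYS1}) the Shi vector determines the element, so $w'=\tau_z w$ and the orientation is the same. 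The main obstacle is step (iii): I need to be careful that the equality of simple Shi coefficients up to $\langle z,\alpha_i^\vee\rangle$, combined with membership in a common component, genuinely forces equality of \emph{all} Shi coefficients up to $\langle z,\theta^\vee\rangle$ — this is where linearity of $P_\theta$ and the identity $\theta^\vee=P_\theta(\alpha_1^\vee,\dots,\alpha_n^\vee)$ do the work, giving $P_\theta(\{k(w',\alpha_i)\})-P_\theta(\{k(w,\alpha_i)\})=P_\theta(\{\langle z,\alpha_i^\vee\rangle\})=\langle z,\theta^\vee\rangle$, and then the common $\lambda_\theta$ cancels. One should also double-check that $\tau_z w$ really has $\iota(\tau_z w)=(k(w,\theta)+\langle z,\theta^\vee\rangle)_\theta$, which follows directly from $A_{\tau_z w}=\tau_z A_w$ and the definition of the strips $H_{\theta,k}^1$.
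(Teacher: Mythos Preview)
Your proposal is correct and follows essentially the same route as the paper's proof: both directions go through Proposition~\ref{bij coh}, the translation formula $k(\tau_z w,\alpha)=k(w,\alpha)+\langle z,\alpha^\vee\rangle$, and the linearity of $P_\theta$ together with $\theta^\vee=P_\theta(\alpha_1^\vee,\dots,\alpha_n^\vee)$ from Theorem~\ref{polynome} to propagate the simple-root identity to all of $\Phi^+$ under the same-component hypothesis. Your second paragraph (about finite parts and ``fractional'' information) is a digression that is not actually needed; the clean structure you lay out in (i)--(iii) is exactly the paper's argument, and the final appeal to the injectivity of $\iota$ (you cite Theorem~\ref{thJYS1}, the paper implicitly uses Theorem~\ref{TH central}) closes the converse just as in the paper.
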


\begin{proof}
By \cite[Theorem 3.3]{JYS1}, we know that $k(\tau_xu, \alpha) = k(u,\alpha) + \langle x, \alpha^{\vee} \rangle$ for any $\alpha \in \Phi^+$ and any $u \in W$. This formula extends to any $u \in W_a$. Indeed for $u=\tau_y \overline{u}$ we have 
\begin{align*}
    k(\tau_xu, \alpha) & = k(\tau_x \tau_y \overline{u}, \alpha)  = k(\overline{u},\alpha) + \langle x+y, \alpha^{\vee} \rangle = k(\overline{u},\alpha) + \langle x, \alpha^{\vee} \rangle + \langle y, \alpha^{\vee} \rangle \\
                       & = k(u,\alpha) + \langle x, \alpha^{\vee} \rangle.
\end{align*}

 By definition, $A_w$ and $A_{w'}$ have the same orientation if and only if there exists $x \in \mathbb{Z}\Phi$ such that $w' = \tau_xw$. Since Shi coefficients characterize elements of $W_a$, $A_w$ and $A_{w'}$ have the same orientation if and only if there exists $x \in \mathbb{Z}\Phi$ such that $k(w',\alpha) = k(\tau_xw, \alpha)$ for any $\alpha \in \Phi^+$. By the above formula it follows that $A_w$ and $A_{w'}$ have the same orientation if and only if $k(w',\alpha)-k(w,\alpha) = \langle x, \alpha^{\vee} \rangle$ for any $\alpha \in \Phi^+$.  

 \medskip
 
  $\bullet$ Let us show the first direction. If $A_w$ and $A_{w'}$ have the same orientation then in particular $k(w',\alpha)-k(w,\alpha) = \langle x, \alpha^{\vee} \rangle$ for any $\alpha \in \Delta$. Therefore, by Proposition \ref{bij coh} we have $\mathsf{s}_{w} \thicksim \mathsf{s}_{w'}$, that is $\overline{\mathsf{s}_{w}} = \overline{\mathsf{s}_{w'}}$ in $H^1(W,\mathbb{Z}\Phi)$.

   \medskip
 
  $\bullet$ Conversely, if $\mathsf{s}_{w} \thicksim \mathsf{s}_{w'}$ then by Proposition \ref{bij coh} there exists $x \in \mathbb{Z}\Phi$ such that $k(w',\alpha)-k(w,\alpha) = \langle x, \alpha^{\vee} \rangle$ for any $\alpha\in \Delta$. By Theorem \ref{polynome} we know that for any $\beta \in \Phi^+$ we have $k(w,\beta) = P_{\beta}(w) + \lambda_{\beta}(w) $ and $ k(w',\beta) = P_{\beta}(w') + \lambda_{\beta}(w')$. Since $\iota(w)$ and $\iota(w')$ are in the same component it follows that $\lambda_{\beta}(w) = \lambda_{\beta}(w')$ for any $\beta \in \Phi^+$. In particular we have 
\begin{align*}
k(w',\beta) - k(w,\beta) &= P_{\beta}(w')-P_{\beta}(w) \\
&= P_{\beta}\Big(\{k(w',\alpha)\}_{\alpha \in \Delta}\Big) - P_{\beta}\Big(\{k(w,\alpha)\}_{\alpha \in \Delta}\Big) \\
& = P_{\beta}\Big(\{k(w',\alpha)-k(w,\alpha)\}_{\alpha \in \Delta}\Big) ~\hspace{1.1cm}(\text{by linearity of~} P_{\beta}) \\
& = P_{\beta}\Big(\{\langle x, \alpha^{\vee} \rangle\}_{\alpha \in \Delta}\Big) \\
& = \left\langle x, P_{\beta}\big(\{\alpha^{\vee}\}_{\alpha \in \Delta}\big)\right\rangle ~\hspace{3cm}(\text{by linearity of~} P_{\beta}) \\
& = \left\langle x, \beta^{\vee} \right\rangle. ~\hspace{4.7cm}(\text{by Theorem~} \ref{polynome})
\end{align*}

Therefore $k(w',\beta) = k(w,\beta) +  \langle x, \beta^{\vee} \rangle$ for any $\beta \in \Phi^+$, that is  $k(w',\beta) = k(\tau_xw,\beta)$ for any $\beta \in \Phi^+$. Thus $w' = \tau_xw$ with $x \in \mathbb{Z}\Phi$, which means that $A_{w'}$ and $A_w$ have the same orientation.
\end{proof}

In the following corollary, whose proof stems directly from Theorem \ref{coho eq orientation} and Section \ref{concrete}, the simple root $\alpha_i$ is in each case the $i$-th simple root with the same conventions as in Section \ref{concrete}. We also use the notation $\overline{k}(w,\alpha):= \overline{k(w,\alpha)}$ for any $\alpha \in \Phi^+$ and any $w \in W_a$. Recall our notation: $I_{r}=\{k \in \llbracket 1,r \rrbracket~|~k~ \text{is odd}\}$.

\begin{corollary}\label{coro orientation} Let $w, w' \in W_a$ such that $\iota(w)$ and $\iota(w')$ are in the same component. Then $A_w$ and $A_{w'}$ have the same orientation if and only if
\begin{enumerate}
    \item Type $A_n$
      $$
      \sum\limits_{j=1}^n j\overline{k}(w,\alpha_j)= \sum\limits_{j=1}^n j\overline{k}(w',\alpha_j) \quad \text{in} \quad  \mathbb{Z}/(n+1)\mathbb{Z}.
      $$

      \item Type $B_n$
      $$
        \overline{k}(w,\alpha_n) = \overline{k}(w', \alpha_n)  \quad \text{in} \quad  \mathbb{Z}/2\mathbb{Z}.    
      $$

      \item Type $C_n$
      $$
      \sum\limits_{j\in I_n} \overline{k}(w,\alpha_j) = \sum\limits_{j \in I_n} \overline{k}(w',\alpha_j) \quad \text{in} \quad \mathbb{Z}/2\mathbb{Z}.
      $$

      \item Type $D_n$: the following situations hold
      \begin{enumerate}
          \item[(i)] $\overline{k}(w,\alpha_{n-1}) + \overline{k}(w,\alpha_n) = \overline{k}(w', \alpha_{n-1}) + \overline{k}(w',\alpha_n)$ in $\mathbb{Z}/2\mathbb{Z}$,
          \item[(ii)] $\sum\limits_{i \in I_{n-2}}2\overline{k}(w,\alpha_i) + n\overline{k}(w,\alpha_{n-1}) + (n-2)\overline{k}(w, \alpha_{n}) = \sum\limits_{i \in I_{n-2}}2\overline{k}(w',\alpha_i) + n\overline{k}(w',\alpha_{n-1}) + (n-2)\overline{k}(w',\alpha_{n})$ in $\mathbb{Z}/4\mathbb{Z}$,
          \item[(iii)] $\sum\limits_{i \in I_{n-2}}2\overline{k}(w,\alpha_i) + (n-2)\overline{k}(w,\alpha_{n-1}) + n\overline{k}(w,\alpha_{n}) = \sum\limits_{i \in I_{n-2}}2\overline{k}(w',\alpha_i) + (n-2)\overline{k}(w',\alpha_{n-1}) + n\overline{k}(w',\alpha_{n})$ in $\mathbb{Z}/4\mathbb{Z}$.
      \end{enumerate}
\end{enumerate}
\end{corollary}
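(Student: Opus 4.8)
The plan is to derive Corollary \ref{coro orientation} directly by combining Theorem \ref{coho eq orientation} with the type-by-type computations of Section \ref{concrete}. Since $\iota(w)$ and $\iota(w')$ lie in the same component by hypothesis, Theorem \ref{coho eq orientation} reduces the statement ``$A_w$ and $A_{w'}$ have the same orientation'' to the statement ``$\mathsf{s}_w$ and $\mathsf{s}_{w'}$ define the same class in $H^1(W,\mathbb{Z}\Phi)$.'' So the only thing left is to translate the latter condition into the explicit modular equations, and for that I would invoke the relevant proposition of Section \ref{concrete} in each type, after identifying the data $(d_i)$ attached to the pair $(\mathsf{s}_w,\mathsf{s}_{w'})$.

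First I would recall that, by construction, $\mathsf{s}_w(s_i)=\tau_{k(w,\alpha_i)\alpha_i}s_i$, so under the bijection $\xi$ of Lemma \ref{braid lemma} the section $\mathsf{s}_w$ corresponds to the point whose $i$-th coordinate is $k(w,\alpha_i)\alpha_i$; in the notation of Section \ref{concrete} this means $a_i = k(w,\alpha_i)$ for $\mathsf{s}_w$ and $b_i = k(w',\alpha_i)$ for $\mathsf{s}_{w'}$, hence
\[
d_i(\mathsf{s}_w,\mathsf{s}_{w'}) = k(w,\alpha_i) - k(w',\alpha_i).
\]
Then in type $A_n$, Proposition \ref{H1 dans A} says $\overline{\mathsf{s}_w}=\overline{\mathsf{s}_{w'}}$ in $H^1(W,\mathbb{Z}\Phi)$ iff $\sum_{j=1}^n j\overline{d_j}=0$ in $\mathbb{Z}/(n+1)\mathbb{Z}$, which rearranges to $\sum_{j=1}^n j\overline{k}(w,\alpha_j)=\sum_{j=1}^n j\overline{k}(w',\alpha_j)$; the types $B_n$, $C_n$, $D_n$ are handled identically using the corresponding propositions of Sections \ref{subB}, \ref{subC}, \ref{subD}, replacing each $\overline{d_i}$ by $\overline{k}(w,\alpha_i)-\overline{k}(w',\alpha_i)$ and splitting the resulting equation into the $w$-part and the $w'$-part.

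I expect the proof to be essentially bookkeeping, with no real obstacle: the content is entirely in Theorem \ref{coho eq orientation} and in Section \ref{concrete}, and the corollary is their mechanical combination. The one point deserving a word of care is the logical structure in type $D$: the proposition of Section \ref{subD} gives the condition for $\mathsf{s}_w$ and $\mathsf{s}_{w'}$ to define \emph{different} classes as the conjunction of the three inequalities i), ii), iii), so ``same class'' is the disjunction of their negations, which is exactly how items i), ii), iii) are phrased in the corollary (``one of the following situations appear''). Beyond flagging this, and making sure the indexing conventions for the simple roots match those fixed in Section \ref{concrete}, there is nothing subtle to do.
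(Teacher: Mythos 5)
Your proposal is exactly the argument the paper intends: the paper offers no proof beyond the remark that the corollary ``stems directly from Theorem \ref{coho eq orientation} and Section \ref{concrete}'', and your reduction via Theorem \ref{coho eq orientation}, the identification $d_i(\mathsf{s}_w,\mathsf{s}_{w'})=k(w,\alpha_i)-k(w',\alpha_i)$ through Lemma \ref{braid lemma}, and the type-by-type translation are precisely that mechanical combination. For types $A$, $B$ and $C$ this is complete and correct, since in each of those cases non-conjugacy is governed by a single congruence and there is no quantifier to get wrong.

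The one place where you rightly sensed a need for care --- type $D$ --- is also the one place where your conclusion (and the paper's) is logically off. You read the $D_n$ proposition as ``different classes iff the conjunction of the three inequalities i), ii), iii) holds'' and negate to get ``same class iff at least one of the corresponding equalities holds''. But the proposition's own proof shows that $\mathsf{s}_1\sim\mathsf{s}_2$ iff the integral system $(S)$ is solvable, i.e.\ iff \emph{all} of the congruence conditions hold simultaneously; hence ``different classes'' is the \emph{disjunction} of the inequalities, and ``same class'' is the \emph{conjunction} of the equalities (in fact equalities ii) and iii) alone suffice, since subtracting them recovers i)). So in type $D$ the correct conclusion is ``all of the following hold'', not ``one of the following situations appear''. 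Concretely, for $n=4$ take $d_1=0$, $d_{n-1}=1$, $d_n=0$ and all other $d_k=0$: then equality ii) holds but equality iii) fails, so the sections are \emph{not} conjugate, yet the disjunctive criterion would declare the alcoves equally oriented. This error is inherited from the paper's own ambiguous phrasing of the $D_n$ proposition and of the corollary, but a blind proof has to extract the quantifier from the proof of that proposition (simultaneous solvability of a linear system over $\mathbb{Z}$) rather than from its statement, and your write-up does not.
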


\bigskip

\begin{example}[Type $A$]
 Set $V=\mathbb{R}^{n+1}$ with canonical basis $\{e_1,\dots, e_{n+1}\}$. A way to describe the  roots  of $A_n$ is by
$
\Phi=\{\pm (e_i - e_j) ~| ~ 1\leq i < j \leq n+1 \}
$
with simple system 
$
\Delta = \{ \alpha_i:=e_i - e_{i+1} ~| ~ 1\leq i <  n +1\},
$
and  positive roots 
$
\Phi^+=\{e_i - e_j ~| ~ 1\leq i < j \leq n+1 \}.
$

A convenient way to write a Shi vector $v=(v_{ij})_{1\leq i < j \leq n+1}$ is by putting its coordinates in a pyramidal shape. For example:
\begin{figure}[h!]
\begin{center}
\begin{tikzpicture} 
\node at (0,0) {$v_{12}$} ;
\node at (1,0) {$v_{23}$} ;
\node at (2,0) {$v_{34}$} ;
\node at (3,0) {$v_{45}$} ;
\node at (4,0) {$v_{56}$} ;
\node at (0.5,0.5 ) {$v_{13}$} ;
\node at (1.5, 0.5 ) {$v_{24}$} ;
\node at (2.5, 0.5) {$v_{35}$} ;
\node at (3.5, 0.5) {$v_{46}$} ;
\node at (1,1) {$v_{14}$} ;
\node at (2,1) {$v_{25}$} ;
\node at (3,1) {$v_{36}$} ;
\node at (1.5,1.5 ) {$v_{15}$} ;
\node at (2.5,1.5 ) {$v_{26}$} ;
\node at (2,2) {$v_{16}$} ;
\end{tikzpicture}
\end{center}
\caption{Presentation of the coordinates of a Shi vector for $n=5$ where $v_{ij}$ is the coordinate over the position $e_i - e_j \in \Phi^+$. With this presentation the coefficients over the simple roots are on the first line of the pyramid.}
\label{root A}
\end{figure}

By (\ref{Shi ineq A}) we have  $v_{ij} = v_{ik} + v_{kj} + \delta_{ikj}(v)$ where $\delta_{ikj} \in \{0,1\}$. Another way to see the relation \say{being in the same component of the Shi variety} is via the coefficients $\delta_{ikj}$. Indeed, two Shi vectors $v, v'$ are in the same component if and only if $\delta_{ikj}(v) = \delta_{ikj}(v')$  for all $1 \leq i <k < j \leq n+1$.

\begin{figure}[h!]
\begin{center}
\begin{tikzpicture} 
\node at (0,0) {$1$} ;
\node at (1,0) {$-2$} ;
\node at (2,0) {$0$} ;
\node at (3,0) {$3$} ;
\node at (4,0) {$7$} ;

\node at (0.5,0.5 ) {$\textcolor{red}{0}$} ;
\node at (1.5, 0.5 ) {$-2$} ;
\node at (2.5, 0.5) {$\textcolor{red}{4}$} ;
\node at (3.5, 0.5) {$\textcolor{red}{11}$} ;

\node at (1,1) {$\textcolor{red}{0}$} ;
\node at (2,1) {$\textcolor{red}{2}$} ;
\node at (3,1) {$11$} ;

\node at (1.5,1.5 ) {$\textcolor{red}{4}$} ;
\node at (2.5,1.5 ) {$9$} ;

\node at (2,2) {$\textcolor{red}{11}$} ;

\node at (6,0) {$0$} ;
\node at (7,0) {$-1$} ;
\node at (8,0) {$2$} ;
\node at (9,0) {$4$} ;
\node at (10,0) {$-3$} ;

\node at (6.5,0.5 ) {$\textcolor{red}{0}$} ;
\node at (7.5, 0.5 ) {$1$} ;
\node at (8.5, 0.5) {$\textcolor{red}{7}$} ;
\node at (9.5, 0.5) {$\textcolor{red}{2}$} ;

\node at (7,1) {$\textcolor{red}{2}$} ;
\node at (8,1) {$\textcolor{red}{6}$} ;
\node at (9,1) {$4$} ;

\node at (7.5,1.5 ) {$\textcolor{red}{7}$} ;
\node at (8.5,1.5 ) {$3$} ;

\node at (8,2) {$\textcolor{red}{4}$} ;
\end{tikzpicture}
\end{center}
\caption{Two Shi vectors in the same component. The red indicates the positions where we need to use $+1$ for at least one $k$.}
\end{figure}

Since these two Shi vectors are in the same component (because the same red pattern  appears), we can now ask whether they have the same orientation or not. By Corollary \ref{coro orientation} we see that the corresponding alcoves do not have the same orientation, since in $\mathbb{Z}/6\mathbb{Z}$ one has
$$
\overline{1} + 2\cdot\overline{-2}+3\cdot\overline{0} + 4\cdot\overline{3} + 5\cdot\overline{7} = \overline{2},
$$
and 
$$
\overline{0} + 2\cdot\overline{-1}+3\cdot\overline{2} + 4\cdot\overline{4} + 5\cdot\overline{-3} = \overline{5}.
$$

\end{example}

\bigskip

\begin{example}[Type $B$]
Let $\{e_1, e_2\}$ be the canonical basis of $\mathbb{R}^2$. In this case, a way to describe the positive roots is by
$
\Phi^+=\{e_1-e_2, \, e_2,\, e_1, \, e_1+e_2\},
$
with simple system 
$
\Delta = \{\alpha_1:=e_1-e_2, \, \alpha_2:=e_2\}.
$

 By Corollary \ref{coro orientation}, two alcoves $A_w$, $A_{w'}$ of $W(\widetilde{B}_2)$ belonging to the same component have the same orientation if and only if $\overline{k}(w,e_2) = \overline{k}(w',e_2)$ in $\mathbb{Z}/2\mathbb{Z}$, which can be checked on Figure \ref{orbits B_2}, with the conventions of Figure \ref{convention}.

\begin{figure}[h!]
\begin{center}
\begin{tikzpicture}

\node at (5,0) {$k(w,e_1-e_2)$} ;
\node at (7,0) {$k(w,e_2)$} ;
\node at (6,1.2) {$k(w,e_1)$} ;
\node at (6,2.4) {$k(w,e_1+e_2)$} ;

\draw[line width=0.3mm](5,0.2)--(6,1);
\draw[line width=0.3mm](7,0.2)--(6,1);
\draw[line width=0.3mm](6,1.4)--(6,2.2);

\node at (8.2,1.2) {$=$} ;

\node at (10,1.2) {$k(w,e_1)$} ;
\node at (12.8,1.2) {$k(w,e_2)$} ;
\node at (11.4,2.4) {$k(w,e_1-e_2)$} ;
\node at (11.4,0) {$k(w,e_1+e_2)$} ;

\end{tikzpicture}
\end{center}
\caption{On the left hand side is the presentation of the Shi vectors of $W(\widetilde{B_2})$ ordered by height as in Figure \ref{root A}, while on the right hand side is the way we put the Shi coefficients inside each alcove in Figure \ref{orbits B_2}.}
\label{convention}
\end{figure}
 In type $B_2$ there are four admitted vectors (corresponding to the four elements of $P_{B_2}$):
 $$
 \lambda_1 = (0,0,0,0) \quad \lambda_2 = (0,0,1,0) \quad \lambda_3 = (0,0,1,1) \quad \lambda_4 = (0,0,2,1)
 $$
 where the positions are given as follows 
 $$(\alpha_1,\, \alpha_2, \, \alpha_1 + \alpha_2, \, \alpha_1 +2\alpha_2) = (e_1 - e_2, \, e_2, \, e_1, \,e_1 + e_2).
 $$
 Therefore,  the Shi variety associated to $W(\widetilde{B}_2)$ is given by
\begin{equation*}
\widehat{X}_{\widetilde{B}_2} =  X_{\widetilde{B}_2}[\lambda_1] ~\sqcup ~X_{\widetilde{B}_2}[\lambda_2] \sqcup X_{\widetilde{B}_2}[\lambda_3] ~\sqcup~ X_{\widetilde{B}_2}[\lambda_4].
\end{equation*}

The integral points of each component correspond to the alcoves having the same color. The first component corresponds to the pink region, the second to the yellow, the third to the blue, and the last to the white. 

\medskip

In Figure \ref{orbits B_2}, the orientation becomes particularly easy to obtain in terms of Shi coefficients, and it agrees with the \say{intuitive} notion of orientation explained in Section~\ref{intuitive section}. 

Two alcoves $A_w$ and $A_{w'}$ have the same orientation if and only if the two following points are satisfied:
\begin{itemize}
    \item[(i)] They have the same color (meaning that they are in the same component),
    \item[(ii)] their coefficients $k(w, e_2)$ and $k(w',e_2)$ are equal modulo $2$.
\end{itemize}

\begin{figure}[h!]
\centering
\includegraphics[scale=0.23]{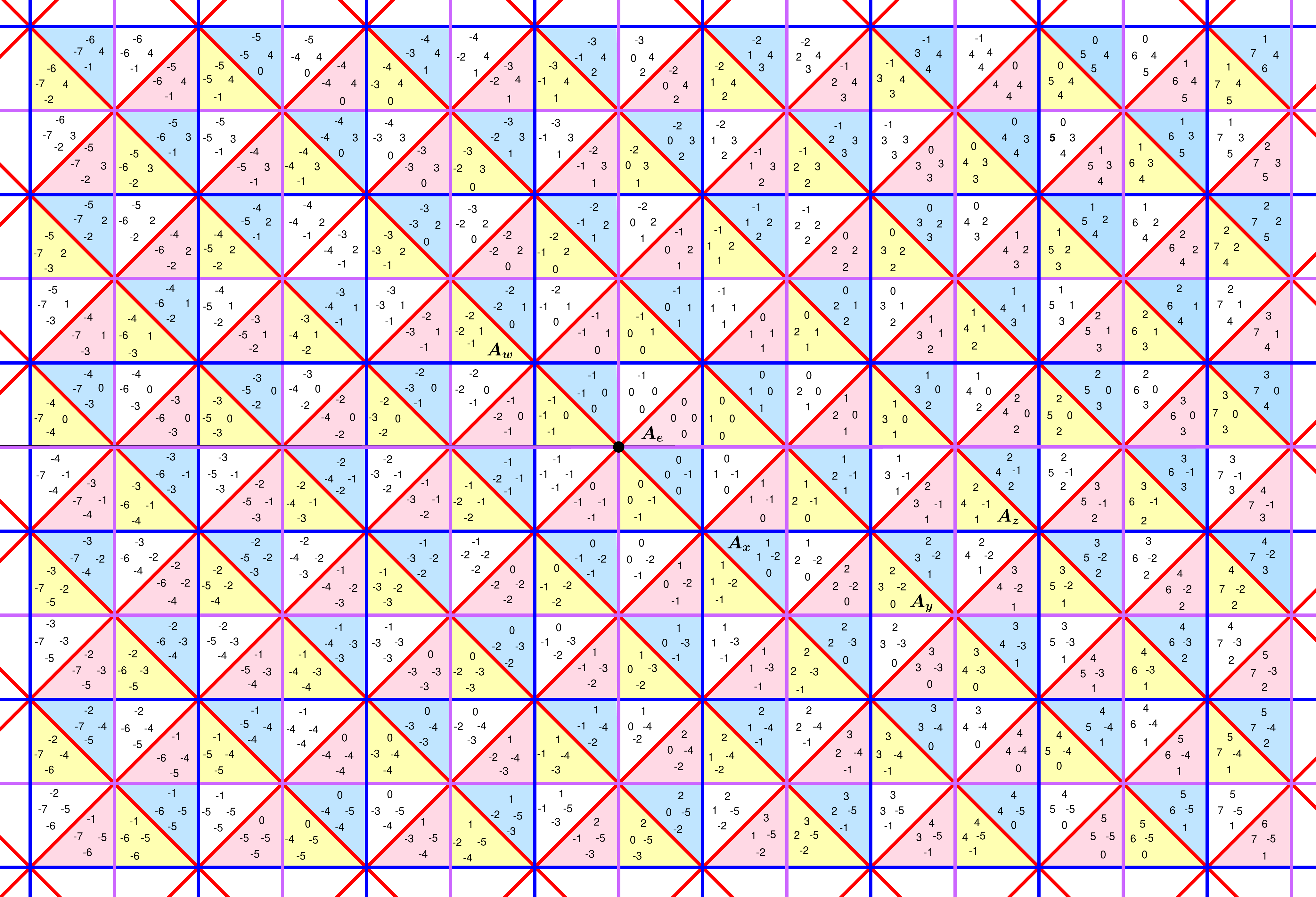} 
\caption{Alcoves in affine type $B_2$ with their Shi coefficients and their orientations. Each color corresponds to a component of the Shi variety. The alcoves $A_w$, $A_x$, $A_y$ and $A_z$ are the same as in Figure \ref{Alcoves B_2}.}
\label{orbits B_2}
\end{figure}
\end{example}

\newpage

\noindent \textbf{Acknowledgements}.
We thank Matthew Dyer and Hugh Thomas for providing helpful comments. We are grateful to the referees for their careful reading, which helped improve the paper. 
This work was supported by NSERC grants, by the LACIM and by the CNRS.

\bibliographystyle{plain}
\bibliography{Orientation}

@article{NC1,
  title={The {S}hi variety corresponding to an affine {W}eyl group},
  author={Chapelier-Laget, Nathan},
  journal={Bulletin of the London Mathematical Society},
  volume={57},
  number={3},
  pages={913--940},
  year={2025},
  publisher={Wiley Online Library}
}

@phdthesis{NathanChapelierPhdThesis,
  author={Chapelier-Laget, Nathan},
  title={Variétés de {S}hi associées aux groupes de {W}eyl affines},
  school  = {Université du {Q}uébec à {M}ontréal},
  year    = {2021},
  address = {Canada},
}

@article {YY,
    AUTHOR = {Yangjiang, Wei and Zou, Yi Ming},
     TITLE = {Inverses of {C}artan matrices of {L}ie algebras and {L}ie superalgebras},
   JOURNAL = {Linear {A}lgebra and its {A}pplications},
    VOLUME = {521},
      YEAR = {2017},
     PAGES = {283-298},
}

@article {JYS1,
    AUTHOR = {Shi, Jian Yi},
     TITLE = {Alcoves corresponding to an affine {W}eyl group},
   JOURNAL = {Journal of the London Mathematical Society (2)},
    VOLUME = {35},
      YEAR = {1987},
    NUMBER = {1},
     PAGES = {42--55},
     }

@book {BOURB,
    AUTHOR = {Bourbaki, N.},
     TITLE = {\'{E}l\'{e}ments de math\'{e}matique. {F}asc. {XXXIV}. {G}roupes et
              alg\`ebres de {L}ie. {C}hapitre {IV}: {G}roupes de {C}oxeter et
              syst\`emes de {T}its. {C}hapitre {V}: {G}roupes engendr\'{e}s par
              des r\'{e}flexions. {C}hapitre {VI}: syst\`emes de racines},
    SERIES = {Actualit\'{e}s Scientifiques et Industrielles, No. 1337},
 PUBLISHER = {Hermann, Paris},
      YEAR = {1968},
     PAGES = {288 pp. (loose errata)},
}

@article {JPS,
    AUTHOR = {Serre, Jean-Pierre},
     TITLE = {Groupes Finis},
     JOURNAL = {Cours à l'école normale supérieure de jeunes filles},
   YEAR = {1978/1979},
   }
\end{document}